\documentclass[a4paper,12pt,reqno]{amsart}

\usepackage{color}
\usepackage{amsmath}
\usepackage{amssymb}
\usepackage{amsfonts}
\usepackage{graphicx}
\usepackage{mathtools}
\usepackage[colorlinks]{hyperref}
\renewcommand\eqref[1]{(\ref{#1})} 
\usepackage{booktabs}
\usepackage{lscape}
\usepackage{braket}
\usepackage{mathrsfs}
\usepackage{comment}
\usepackage[shortlabels]{enumitem}
\usepackage{stackrel}
\usepackage{mathtools,bm}
\usepackage{tikz}
\usepackage{tikz-cd}
\usepackage{graphicx}
\usepackage{dsfont}
\usepackage{float}
\usetikzlibrary{decorations.markings}
\usepackage{hyperref}

\graphicspath{ {images/} }
\setlength{\textwidth}{15.2cm}
\setlength{\textheight}{22.7cm}
\setlength{\topmargin}{0mm}
\setlength{\oddsidemargin}{3mm}
\setlength{\evensidemargin}{3mm}
\setlength{\footskip}{1cm}

\newcommand{\bbC}{{\Bbb C}}

\newcommand{\bbN}{{\Bbb N}}
\newcommand{\bbR}{{\Bbb R}}

\newcommand{\tr}{\operatorname{tr}}

\title[Functional calculus for Safarov pseudo-differential operators]{Functional calculus for Safarov pseudo-differential operators}

\author[S. G\'omez Cobos]{Santiago G\'omez Cobos}
\address{
	Santiago G\'omez Cobos:
	\endgraf
	Department of Mathematics: Analysis, Logic and Discrete Mathematics
	\endgraf
	Ghent University, Krijgslaan 281, Building S8, B 9000 Ghent
	\endgraf
	Belgium
	\endgraf
	{\it E-mail address} {\rm davidsantiago.gomezcobos@ugent.be}}

\author[M. Ruzhansky]{Michael Ruzhansky}
\address{
	Michael Ruzhansky:
	\endgraf
	Department of Mathematics: Analysis, Logic and Discrete Mathematics
	\endgraf
	Ghent University, Krijgslaan 281, Building S8, B 9000 Ghent
	\endgraf
	Belgium
	\endgraf
	and
	\endgraf
    School of Mathematical Sciences
    \endgraf
    Queen Mary University of London
    \endgraf
    United Kingdom
    \endgraf
	{\it E-mail address} {\rm michael.ruzhansky@ugent.be}}


\subjclass[2020]{58J40, 35S05, 47A60.}
\keywords{Closed Riemannian manifolds, pseudo-differential operators, holomorphic functional calculus.}



\newtheoremstyle{theorem}
{10pt}          
{10pt}  
{\sl}  
{\parindent}     
{\bf}  
{. }    
{ }    
{}     
\theoremstyle{theorem}

\numberwithin{equation}{section}
\theoremstyle{plain} 
\newtheorem{thm}{Theorem}[section]
\newtheorem{prop}[thm]{Proposition}
\newtheorem{cor}[thm]{Corollary}
\newtheorem{lem}[thm]{Lemma}

\theoremstyle{definition}
\newtheorem{defn}[thm]{Definition}
\newtheorem{rem}[thm]{Remark}

\newtheoremstyle{defi}
{10pt}          
{10pt}  
{\rm}  
{\parindent}     
{\bf}  
{. }    
{ }    
{}     
\theoremstyle{defi}



\begin{document}
 	\begin{abstract}
Given a smooth, closed Riemannian manifold $(M,g)$ equipped with a linear connection $\nabla$ (not necessarily metric), we develop the holomorphic functional calculus for operators belonging to the global pseudo-differential classes $\Psi_{\rho, \delta}^m\left(\Omega^\kappa, \nabla, \tau\right)$ introduced by Safarov. As a consequence of our main result, we establish a Szeg\"o type-theorem, derive asymptotic expansion of the heat kernel trace, and calculate some associated spectral $\zeta$-functions.
\end{abstract}
	\maketitle
  	\tableofcontents

\section{Introduction}
The purpose of this article is to establish the holomorphic functional calculus for the Safarov pseudo-differential classes on manifolds, extending our previous work \cite{GomezCobosRuzhansky} and further enriching Safarov's theory.

To contextualize the problem, the term {\it functional calculus} generally refers to the process of defining an operator $f(A)$ given an operator $A$ acting on Banach spaces and a function $f$ defined on the real or complex numbers. This process depends on both the properties of $A$ (such as its spectrum and resolvents) and those of $f$ (such as boundedness and continuity). As a result, various types of functional calculi exist, including Borel, $H^\infty$, and holomorphic functional calculus, among others (see, e.g., \cite{Haase, KadisonRingrose}). In this article, we focus on the holomorphic case.

The study of holomorphic functional calculus on manifolds traces back to Seeley’s seminal work \cite{Seeley}, in which he developed a functional calculus for classical pseudo-differential operators. His motivation was to examine the closure of the algebra of such operators and to determine when $f(A)$ remains a classical pseudo-differential operator. Seeley’s results have since found numerous applications in analysis and geometric analysis, including the construction of solutions to certain partial differential equations, the study of $\zeta$-functions associated with operators, and the computation of invariants such as indices and Wodzicki residues.

Over time, many mathematicians have explored this problem in different settings, adapting both the underlying space and the pseudo-differential classes. To mention some contributions we include the works of Widom \cite{w2}, Schrohe \cite{Schrohe}, Vassout \cite{Vass}, Ruzhansky and Wirth \cite{RuzhanskyWirth}, among others. Moreover, after Seeley's work, further developments focused on a more refined treatment of pseudo-differential operators that depend on a parameter. This led to the construction of specific operator classes, rather than merely working with the resolvent operator. An early step in this direction was taken by Shubin \cite{shu}, who introduced what are now known as Shubin classes, covering the case of differential operators. Later, Grubb and Seeley \cite{GrubbSeeley} developed a more general framework that extended to the full class of pseudo-differential operators.

However, given the applications we have in mind, it suffices for us to work directly with the resolvents of operators belonging to Safarov's classes $\Psi_{\rho, \delta}^{m}(\Omega^{\kappa}, \nabla, \tau)$ (as defined in Section \ref{Section 2}). The study of these properties within Safarov's calculus is of particular interest because the classes $\Psi_{\rho, \delta}^{m}(\Omega^{\kappa}, \nabla, \tau)$ extend the classical local H\"ormander classes $\Psi_{\rho, \delta, loc}^m(M)$ on manifolds in various ways. In the H\"ormander setting, the values of $\rho$ and $\delta$ are constrained by the requirement of coordinate invariance, imposing the condition $\rho>1/2$. In contrast, Safarov's framework allows greater flexibility, permitting $\rho>1/3$ and even $\rho>0$. However, this increased freedom comes at the cost of additional geometric conditions. The reason for these new constraints lies in the intrinsic nature of Safarov's classes: they are defined via a linear connection $\nabla$, meaning that restrictions on $\nabla$ directly influence the range of permissible values for $\rho$. Namely, there are three cases: 
\begin{enumerate}
    \item no condition on $\nabla$ and $\rho>\frac{1}{2}$, here the classes $\Psi_{\rho, \delta}^{m}(\Omega^{\kappa}, \nabla, \tau)$ coincide with H\"ormander ones;
    \item the connection $\nabla$ is {\it symmetric} (zero torsion) and $\rho>\frac{1}{3}$, here the classes depend on $\nabla$;
    \item the connection $\nabla$ is {\it flat} (zero torsion and curvature) and no restriction on $\rho>0$, here the classes depend on $\nabla$.
\end{enumerate}
In his original work, Safarov \cite{Safarov} addressed a range of questions, including symbolic calculus, $L^2$- boundedness, ellipticity, etc., and later on dealt with approximate spectral projections \cite{mcK}. Inspired by this approach, Shargorodsky \cite{Shargorodsky} adapted the setting to anisotropic differential operators on manifolds, where he further investigated topics such as $L^p$-boundedness, Fredholm properties, complex powers, and related aspects. 

It is worth noting that this is not the only approach to defining symbol classes on manifolds via connections. Alternative constructions can be found, for example, in the works of Widom \cite{w2, w1}, Fulling–Kennedy \cite{fk}, and Sharafutdinov \cite{sha, sha2}. Furthermore, the idea of employing global symbols on the cotangent bundle has also been developed in other contexts, see, for instance, \cite{Getzler, Voronov2, Voronov1}. 

Now, we introduce our main result. Given $A\in \Psi_{\rho, \delta}^{m}(\Omega^{\kappa}, \nabla, \tau)$ and a suitably regular function $f$, our goal is to define $f(A)$ using a Dunford–Riesz integral:
\begin{equation}\label{intIntro}
    f(A) = \frac{1}{2\pi i}\int_{\Gamma} f(\lambda) (A-\lambda I)^{-1}\, d\lambda,
\end{equation}
where $\Gamma$ is a carefully chosen contour. To ensure that the integral is well-defined, we must analyze the properties of the resolvent operators $(A-\lambda I)^{-1}$. To this end, we introduce the concept of {\it parameter-ellipticity} (see Definition \ref{paraElliDef}), which allows us to derive symbolic estimates and the construction of a parameter-parametrix which approximates the resolvent operator. These, in turn, lead to norm estimates that exhibit decay as $\lambda$ grows (see Theorem \ref{EstimadoFinalResolvente} and Corollary \ref{normResolvent}). Therefore, having these tools at hand, we were able to prove the following result (for precise definitions see Sections \ref{Section 2} and \ref{Section 3}). 
\begin{thm}\label{thmFunctionalCalIntr}
 Let $(M,g)$ be a closed Riemannian manifold and let $\nabla$ be a linear connection (not necessarily metric). Let $0\leq \delta<\rho \leq 1$, $m\geq 0$, $A\in \Psi_{\rho, \delta}^{m}(\Omega^{\kappa}, \nabla, \tau)$, let $\Lambda\subset \bbC$ be a sector and let $f$ be a holomorphic function on $\bbC\setminus\Lambda$. Suppose that at least one of the following conditions is fulfilled:
\begin{enumerate}
    \item $\rho>\frac{1}{2}$;
    \item the connection $\nabla$ is symmetric and $\rho>\frac{1}{3}$;
    \item the connection $\nabla$ is flat.
\end{enumerate}
Suppose that $\sigma_A$, the symbol of $A$, is parameter-elliptic with respect to $\Lambda$. 
\begin{itemize}
    \item If $m>0$, suppose that $f$ satisfies 
 \begin{equation*}
      |f(\lambda)|\leq C |\lambda|^s
 \end{equation*}
 uniformly for some $s\in \bbR$. Then the operator $f(A)$, given by formula \eqref{intIntro}, is well-defined and belongs to the class $\Psi_{\rho, \delta}^{ms}(\Omega^{\kappa}, \nabla, \tau)$.
 \item If $m=0$, then the operator $f(A)$, given by formula \eqref{intIntro}, is well-defined and belongs to the class $\Psi_{\rho, \delta}^{0}(\Omega^{\kappa}, \nabla, \tau)$.
\end{itemize}
Moreover, in both cases, the symbol $\sigma_{f(A)}$ satisfies the asymptotic expansion 
   \[
  \sigma_{f(A)}(y,\eta)\sim f(a(y,\eta)) + \sum_{k=1}^\infty \sum_{\mathfrak{I}_k}(-1)^{1+|\mathfrak{I}_{k,L}|}  \frac{f^{(k+|\mathfrak{I}_{k,L}|)}(a(y,\eta))}{(k+|\mathfrak{I}_{k,L}|)!}\mathfrak{r}_{\mathfrak{I}_k}(y,\eta)
   \]  
   as $\langle\eta\rangle_{y}\to\infty$. Here, $\mathfrak{r}_{\mathfrak{I}_k}$ is a function related to $\sigma_A$ and its derivatives, for precise definition and notation see Remark \ref{remExpAsyPara}. 
\end{thm}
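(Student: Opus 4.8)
The plan is to build $f(A)$ through the Dunford--Riesz integral \eqref{intIntro} and to carry out all estimates at the level of symbols, reducing everything to the resolvent bounds that are already established in Theorem \ref{EstimadoFinalResolvente} and Corollary \ref{normResolvent}. First I would fix a contour $\Gamma$ surrounding the spectrum of $A$ inside the parameter-elliptic sector's complement: since $\sigma_A$ is parameter-elliptic with respect to $\Lambda$, the resolvent $(A-\lambda I)^{-1}$ exists for $\lambda$ in a truncated subsector $\Lambda'\subset\Lambda$ with $|\lambda|$ large, and the remaining finitely many spectral points can be enclosed by a compact loop; the usual Agmon-type choice of $\Gamma$ going to infinity along two rays of $\Lambda$ makes the integral absolutely convergent once the growth hypothesis on $f$ and the resolvent decay $\|(A-\lambda I)^{-1}\|\lesssim |\lambda|^{-1}$ (more precisely the operator-norm estimates on appropriate Sobolev scales) are combined. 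Convergence of \eqref{intIntro} in the strong operator topology on each Sobolev space, hence well-definedness of $f(A)$ as a continuous operator, is then routine.

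The core of the argument is showing $f(A)\in\Psi_{\rho,\delta}^{ms}(\Omega^\kappa,\nabla,\tau)$ (resp.\ $\Psi_{\rho,\delta}^0$ when $m=0$) together with the symbol expansion. The strategy is to replace $(A-\lambda I)^{-1}$ by the parameter-parametrix $B(\lambda)$ constructed in Section \ref{Section 3}, whose symbol is given by a parameter-dependent asymptotic series built from $\sigma_A$ and its derivatives via the Safarov composition formula, and to control the remainder $(A-\lambda I)^{-1}-B(\lambda)$ which, by the parameter-ellipticity estimates, is regularizing with a gain that is uniform and suitably decaying in $\lambda$. Substituting $B(\lambda)$ into \eqref{intIntro} and integrating term by term, each term is of the form $\frac{1}{2\pi i}\int_\Gamma f(\lambda)\,\mathfrak{r}(y,\eta)\,(a(y,\eta)-\lambda)^{-N}\,d\lambda$ for suitable polynomial-in-resolvent expressions $\mathfrak{r}$ coming from the symbol of $B(\lambda)$; the Cauchy integral formula evaluates $\frac{1}{2\pi i}\int_\Gamma f(\lambda)(a-\lambda)^{-N}\,d\lambda = (-1)^{N-1}\frac{f^{(N-1)}(a)}{(N-1)!}$, which is exactly how the factors $f^{(k+|\mathfrak{I}_{k,L}|)}(a)/(k+|\mathfrak{I}_{k,L}|)!$ and the sign $(-1)^{1+|\mathfrak{I}_{k,L}|}$ in the claimed expansion arise. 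One then checks, using the growth bound $|f(\lambda)|\le C|\lambda|^s$ together with the order-counting for the parametrix symbol terms, that the $k$-th term lies in the symbol class of order $ms - k(\rho-\delta)$ (the bookkeeping of how many $\eta$-derivatives versus $y$-derivatives each $\mathfrak{I}_k$ contributes is what produces the $(\rho-\delta)$-gain per step), so the series is a genuine asymptotic expansion; the remainder term in \eqref{intIntro} contributes a smoothing operator after integration because its symbol decays in $\langle\eta\rangle_y$ faster than any power, uniformly enough in $\lambda$ to survive the contour integral.

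The main obstacle I expect is the uniformity in $\lambda$: the Safarov symbol calculus estimates, the composition/parametrix construction, and the smoothing bounds on the parametrix remainder all carry constants, and to integrate over the non-compact contour $\Gamma$ one needs these constants to be controlled by explicit powers of $|\lambda|$ so that, after multiplication by $|f(\lambda)|\le C|\lambda|^s$ and integration, everything converges and lands in the right class. This is precisely what parameter-ellipticity (Definition \ref{paraElliDef}) and Theorem \ref{EstimadoFinalResolvente} are designed to give, so the work is to track the $\lambda$-dependence through each step of the parametrix construction rather than to invent new estimates; the geometric hypotheses (1)--(3) enter only through the permissible range of $\rho$, exactly as in the mapping and composition statements of Sections \ref{Section 2}--\ref{Section 3}, so they require no separate treatment here beyond invoking those results in the appropriate regime. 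A secondary technical point is justifying the term-by-term integration of the asymptotic series, which is handled in the standard way by integrating a finite partial sum plus an explicit remainder and estimating the remainder uniformly on $\Gamma$.
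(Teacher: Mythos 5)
Your overall architecture is the same as the paper's: approximate $(A-\lambda I)^{-1}$ by the parameter-parametrix $A_\lambda^\sharp$ of Theorem \ref{realEstiParameSymbol}, note that the difference is smoothing with rapid decay in $\lambda$ (Corollary \ref{normResolvent}), integrate the parametrix symbol term by term against $f(\lambda)$, and evaluate each term $\frac{1}{2\pi i}\int_\Gamma f(\lambda)(a-\lambda)^{-N}\,d\lambda$ by the Cauchy integral formula to produce the $f^{(k+|\mathfrak{I}_{k,L}|)}(a)$ coefficients. That is exactly the mechanism of Theorems \ref{thmFunctionalCal} and \ref{thmFunctionalCalZero} together with Remark \ref{remExpAsyPara}.

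There is, however, a genuine gap in your convergence step. You assert that the growth hypothesis $|f(\lambda)|\le C|\lambda|^s$ combined with the resolvent decay $\|(A-\lambda I)^{-1}\|\lesssim|\lambda|^{-1}$ makes \eqref{intIntro} absolutely convergent. Along the unbounded rays of $\Gamma$ the integrand is of size $r^{s-1}$, so absolute convergence holds only for $s<0$; for $s\ge 0$ (which the theorem allows) the integral simply does not converge as written, and even for $s\le -1$ the naive bound $\int_\epsilon^\infty r^s(r^{1/m}+\langle\eta\rangle_y)^{-m}\,dr\lesssim\langle\eta\rangle_y^{-m}$ fails to produce the sharp order $ms$. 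The paper resolves both points by a bootstrap that your proposal omits: the direct contour estimate is carried out only for $-1<s<0$, where it yields $\tilde a_{f(a)}\in S^{ms}_{\rho,\delta}(\nabla)$; for $s<-1$ one writes $f=g^{-k}$ with $|g(\lambda)|\le|\lambda|^{s/(-k)}$, $-1<s/(-k)<0$, and applies the composition Theorem \ref{producto} $-k$ times; and for $s\ge 0$ one writes $f(\lambda)=g(\lambda)\lambda^k$ with $|g(\lambda)|\le C|\lambda|^{s-k}$, $s-k<0$, defining $f(A)=g(A)A^k$ (Corollary \ref{corFunctionalPositive}). Without some such factorization (or an equivalent regularization, e.g.\ integrating by parts in $\lambda$), the operator $f(A)$ is not even defined for growing $f$, and the symbol-order bookkeeping for strongly decaying $f$ does not close. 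A second, smaller imprecision: the gain per term in the expansion is $r=\min\{\rho-\delta,2\rho-1\}$ (resp.\ $\min\{\rho-\delta,\tfrac12(3\rho-1)\}$, resp.\ $\rho-\delta$) as in \eqref{rGenRhoMinusDelta}, not $\rho-\delta$ in all three geometric regimes; and the $m=0$ case uses a bounded contour around the (bounded) spectrum, which is why no growth condition on $f$ is needed there.
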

We would like to emphasize that our result extends \cite[Theorem 11.2]{Safarov}, in which Safarov developed a functional calculus for the Laplace–Beltrami operator with an added potential, i.e., operators of the form $\mathcal{L}+\mathfrak{v}$, and for a specific class of functions satisfying certain polynomial growth or decay conditions.

Finally, as applications of our main theorem, we construct three fundamental operators: the complex powers $A^z$, the exponential (or heat) operator $e^{-A}$, and, for zero-order (more generally negative order) operators, the logarithm $\log A$. Moreover, we apply these operators to establish several important results; namely the calculation of the traces of the aforementioned operators. Specifically, we prove a Szeg\"o type-theorem in the sense of Widom \cite{w2} (Theorem \ref{LimitThe}), derive heat kernel trace asymptotics (Theorem \ref{heatKernelExpansion}), and obtain formulas for the spectral $\zeta$-functions associated with $A$ (Theorem \ref{zetaFun}).

\section{Preliminaries}\label{Section 2}
In this section we recall the Safarov pseudo-differential calculus and the relevant results that we will need in Sections \ref{Section 3}, \ref{Section 4} and \ref{Section 5}. Also we construct a trace for this algebra of pseudo-differential operators in a standard way, which will be used in the applications part of the article. Over the whole article we will be using implicitly the Einstein notation.   
\subsection{Some geometric generalities}
First, let us fix the geometric objects and notation we are going to use over the whole document, for more details see, e.g. \cite{berl,kob}. Let $M$ be a $n$-dimensional smooth manifold. We will need densities in order to integrate and define the oscillatory integrals we are going to study, and connections to define the symbol classes and phase functions of such oscillatory integrals. We start with the former. We fix $|dx|$ to be a section of the $1$-density line bundle $\Omega^1\to M$, for a local coordinate system $\{x^k\}$, this means that it is written as
\[
|dx| = \omega(x)\, |dx^1\wedge\cdots \wedge dx^n|
\]
with $\omega$ a positive $C^\infty(M)$ function. If $M$ is a Riemannian manifold, then $\omega(x)= g(x) = \sqrt{|\det(g_{ij}(x))|}$ will do the trick, where $\{g_{ij}\}$ is the Riemannian metric. This section allows us to define integration of functions ($0$-densities), but even more for any $\kappa\in\bbR$ we can define a natural pairing between $\Omega^\kappa$ and $\Omega^{1-\kappa}$ given by integration $\int$ as follows:
\begin{align*}
    \Omega^\kappa\times \Omega^{1-\kappa} &\xrightarrow{( \cdot, \cdot):=\int}\bbC\\
    (\phi, \psi)\hspace{0.5cm} &\xmapsto{\hspace{0.9cm}} \int_M \phi(x) \psi(x) \,|dx|. 
\end{align*}
Moreover, $|dx|$ also induces a measure on the fibers of the cotangent bundle $T^*M\to M$. That is, for a local coordinate system $\{(x^k,\zeta_k)\}$ of $T^*M$, we define a measure on $T_x^*M$ given by 
\begin{equation}\label{fiberForm}
    d\zeta:= d\zeta_x = \omega(x)^{-1}\, d\zeta_1\wedge\cdots \wedge d\zeta_n,
\end{equation}
so that 
\begin{equation}\label{cotangentBundleForm}
    \Omega = d\zeta_x \wedge |dx| =  d\zeta_1\wedge\cdots \wedge d\zeta_n\wedge |dx^1\wedge\cdots \wedge dx^n|
\end{equation}
is the canonical $2n$-form on the cotangent bundle $T^*M$.  

Now, let $\nabla$ be a connection on the cotangent bundle $T^*M$. Notice that since $M$ is not assumed to be Riemannian then we are not assuming that the connection is metric as well (even if it would be Riemannian, we do not need metricity). Associated to a connection there is always given a lot of local geometrical information like geodesics, torsion, curvature, etc. so we will shortly list the objects that we will need.
\begin{itemize}
\item Normal coordinates systems, for them we will use the abbreviation n.c.s. 
\item We denote by $\Gamma_{jk}^i$ the Christoffel symbols associated to $\nabla$,  $T=\{T_{jk}^i\}$ the torsion tensor, $R=\{R_{jkl}^i\}$ the curvature tensor and $\{R_{kl}\}=\{\sum_i R_{kil}^i\}$ the Ricci tensor. If $T\equiv 0$ we say that $\nabla$ is symmetric, and if in addition $R\equiv 0$ we say that $\nabla$ is flat.
\item Given a neighborhood $U_x$ of $x$, we denote by $\gamma_{y,x}(t)$ the shortest geodesic joining $x$ and $y\in U_x$. We will use the notation $z_t= \gamma_{y,x}(t)$.
\item We denote by $\Phi_{y,x}:T_x^*M\to T_y^*M$ the parallel transport along $\gamma_{y,x}(t)$ and we define $\Upsilon_{y,x} := \Upsilon_{x}(y)=|\operatorname{det} \Phi_{y,x}|$, the latter function is a $1$-density in $x$ and a $(-1)$-density in $y$. 
    \end{itemize}
Choosing a connection means that we are choosing a horizontal space $HT^*M$ such that $TT^*M\cong VT^*M\oplus HT^*M$, where $VT^*M$ is the natural vertical part which is generated by vector fields $\{\partial_{\zeta_j}\}\subset \mathfrak{X}(T^*M)$. On the other hand, the connection $\nabla$ will give us the generators of the horizontal part by taking the horizontal lifts of the vector fields $\partial_{x^k}\in \mathfrak{X}(M)$, specifically given a vector field $X=\sum X^{k}(x) \partial_{x^{k}}$ on $M$, its horizontal lift is defined as 
\begin{equation}\label{horizontalLift}
 \nabla_{X}=\sum_{k} X^{k}(x) \partial_{x^{k}}+\sum_{i, j, k} \Gamma_{k j}^{i}(x) X^{k}(x) \zeta_{i}\partial_{\zeta_{j}}.    
\end{equation}
In particular, if $X=\partial_{y^{k}}$ we will denote its horizontal lift by $\nabla_k$, these are the derivatives that will appear in the definition of the symbols classes and they will take over the place of spacial derivatives. 
\subsection{Safarov calculus}
Here we introduce the Safarov pseudo-differential calculus following \cite{Safarov} and state some of the key results for our investigation, for related results and examples we also refer to \cite{GomezCobosRuzhansky}.

Locally the operators are going to be represented by a kernel given by an oscillatory integral, therefore the main ingredients are going to be symbol classes and phase functions. Both of them will be related to the connection $\nabla$ and the local geometry associated to it. We start with the symbols. 
\begin{defn}\label{defSym}
     Let $0\leq \delta<\rho \leq 1$, the space $S_{\rho, \delta}^m(\nabla)$ denotes the class of functions $a\in C^\infty(T^*M)$ such that in any coordinates $y$, for all $\alpha$ and $i_1, \ldots, i_q$, 
$$
\left|\partial_{\eta}^{\alpha} \nabla_{i_{1}} \ldots \nabla_{i_{q}} a(y, \eta)\right| \leq \mathrm{const}_{K, \alpha, i_{1}, \ldots, i_{q}}\langle\eta\rangle_{y}^{m+\delta q-\rho|\alpha|},
$$
where $y$ runs over a compact set $K\subset M$, $\langle\eta\rangle_{y}:= (1 + w^2(y, \eta))^{1/2}$ where $w\in C^\infty(T^*M\setminus 0)$ is a positive function homogeneous in $\eta$ of degree 1, and $\nabla_{i_k}$ is as in \eqref{horizontalLift}. Also amplitudes are defined as satisfying 
$$
\left|\partial_{z}^{\beta} \partial_{\eta}^{\alpha} \nabla_{i_{1}} \ldots \nabla_{i_{q}} a(z ; y, \eta)\right| \leq \mathrm{const}_{K, \alpha, \beta, i_{1}, \ldots, i_{q}}\langle\eta\rangle_{y}^{m+\delta|\beta|+\delta q-\rho|\alpha|}, \text{ for all } y,z\in K.
$$
\end{defn}
As in the classical H\"ormander classes, the classes $S_{\rho, \delta}^m(\nabla)$ satisfy the following properties:
\begin{prop}
The following holds.
     \begin{itemize}
            \item If $a\in S_{\rho, \delta}^{m_1}(\nabla)$ and $b\in S_{\rho, \delta}^{m_2}(\nabla)$, set $m=\operatorname{max}(m_1, m_2)$, then 
            $$ab\in S_{\rho, \delta}^{m_1+m_2}(\nabla) \text{ and } a+b\in S_{\rho, \delta}^{m}(\nabla).$$
            \item Moreover,
            $\partial_\eta^\alpha a \in S_{\rho, \delta}^{m-\rho|\alpha|}(\nabla)$, $\nabla_{v_1}\cdots\nabla_{v_q}a\in S_{\rho, \delta}^{m+\delta q}(\nabla)$
            for all $a\in S_{\rho, \delta}^{m}(\nabla)$ and any $v_1, \ldots, v_q\in \mathfrak{X}(M).$
            \end{itemize}
\end{prop}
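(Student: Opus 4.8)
\emph{Approach.} The plan is to reduce both bullets to the defining inequalities of Definition \ref{defSym} by iterating the Leibniz rule, the only step that is not purely formal being that the horizontal lifts $\nabla_k$ do not commute with the fibre derivatives $\partial_{\eta_l}$. Two observations will be used repeatedly. First, each $\partial_{\eta_l}$ and each $\nabla_k$ is a derivation of $C^\infty(T^*M)$, so the Leibniz rule for products applies even without commutativity. Second, from \eqref{horizontalLift} one computes
\[
\bigl[\nabla_k,\partial_{\eta_l}\bigr]=-\sum_j\Gamma_{kj}^l(y)\,\partial_{\eta_j},
\]
again a vertical, $\eta$-independent, first-order operator with $C^\infty(M)$ coefficients; moreover $\nabla_k$ applied to a function of $y$ alone equals $\partial_{y^k}$ of it, and the horizontal lift is $C^\infty(M)$-linear, $\nabla_{fX}=f\,\nabla_X$. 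All geometric data appearing below ($\Gamma_{kj}^i$ and their coordinate derivatives, the components $v_l^k$ of the given vector fields and their derivatives) are smooth, hence bounded on the fixed compact set $K$, so uniformity of constants over $K$ will be automatic and I will not comment on it again.

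\emph{Sum and product.} For $a+b$ the triangle inequality, $\langle\eta\rangle_y\ge1$ and $m_1,m_2\le m$ immediately give the order-$m$ estimate. For $ab$ I would apply $\partial_\eta^\alpha\nabla_{i_1}\cdots\nabla_{i_q}$ and expand by Leibniz: since each $\nabla_{i_r}$ and each $\partial_{\eta_l}$ is a derivation, this produces a finite sum of terms
\[
\bigl(\partial_\eta^{\alpha'}\nabla_{j_1}\cdots\nabla_{j_p}a\bigr)\bigl(\partial_\eta^{\alpha''}\nabla_{k_1}\cdots\nabla_{k_r}b\bigr),\qquad \alpha'+\alpha''=\alpha,\ p+r=q,
\]
where $(j_1,\dots,j_p)$ and $(k_1,\dots,k_r)$ are complementary ordered subsequences of $(i_1,\dots,i_q)$. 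The point I would stress is that each factor has exactly the form controlled by Definition \ref{defSym} --- all $\partial_\eta$'s on the outside of a string of $\nabla$'s --- hence is bounded by a constant times $\langle\eta\rangle_y^{m_1+\delta p-\rho|\alpha'|}$, resp.\ $\langle\eta\rangle_y^{m_2+\delta r-\rho|\alpha''|}$; multiplying yields the bound $\langle\eta\rangle_y^{(m_1+m_2)+\delta q-\rho|\alpha|}$ required for $ab\in S^{m_1+m_2}_{\rho,\delta}(\nabla)$.

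\emph{Derivatives.} For $\partial_\eta^\alpha a$ one has to bound $\partial_\eta^{\alpha'}\nabla_{i_1}\cdots\nabla_{i_q}\bigl(\partial_\eta^\alpha a\bigr)$, and here the commutator identity enters. Moving the innermost block $\partial_\eta^\alpha$ leftwards past the $\nabla$'s, an induction on $q$ shows that $\nabla_{i_1}\cdots\nabla_{i_q}\partial_\eta^\alpha a$ is a finite sum $\sum_\beta c_\beta(y)\,\partial_\eta^\beta\nabla_{k_1}\cdots\nabla_{k_s}a$ with $|\beta|=|\alpha|$, $s\le q$ and $c_\beta\in C^\infty(M)$ --- each commutator step keeps the number of fibre derivatives fixed and only removes $\nabla$'s. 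Applying the outer $\partial_\eta^{\alpha'}$ (which passes the $y$-functions $c_\beta$) and then Definition \ref{defSym} for $a$ bounds each term by a constant times $\langle\eta\rangle_y^{m+\delta s-\rho(|\alpha'|+|\alpha|)}$, and since $\langle\eta\rangle_y\ge1$, $\delta\ge0$, $s\le q$ this is $\le$ a constant times $\langle\eta\rangle_y^{(m-\rho|\alpha|)+\delta q-\rho|\alpha'|}$, which is the estimate for $\partial_\eta^\alpha a\in S^{m-\rho|\alpha|}_{\rho,\delta}(\nabla)$. For $\nabla_{v_1}\cdots\nabla_{v_q}a$ with $v_l=v_l^k\partial_{y^k}$ I would use $\nabla_{v_l}=v_l^k\nabla_k$ and, after applying an arbitrary outer $\partial_\eta^{\tilde\alpha}\nabla_{i_1}\cdots\nabla_{i_{q'}}$, pull the smooth coefficients $v_l^k$ out through all the $\nabla$'s by Leibniz (no $\partial_\eta$--$\nabla$ commutation is needed, the $v_l^k$ depending on $y$ only); this expresses the result as a finite sum of $c(y)\,\partial_\eta^{\tilde\alpha}\nabla_{l_1}\cdots\nabla_{l_{s}}a$ with $s\le q'+q$ and $c\in C^\infty(M)$, and Definition \ref{defSym} then bounds each by a constant times $\langle\eta\rangle_y^{m+\delta s-\rho|\tilde\alpha|}\le$ a constant times $\langle\eta\rangle_y^{(m+\delta q)+\delta q'-\rho|\tilde\alpha|}$, giving $\nabla_{v_1}\cdots\nabla_{v_q}a\in S^{m+\delta q}_{\rho,\delta}(\nabla)$.

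\emph{Amplitudes and main obstacle.} The amplitude versions follow verbatim: the extra operator $\partial_z^\beta$ rides unchanged through every manipulation and contributes only the harmless shift $\delta|\beta|$ to each exponent. I expect the only point demanding genuine care to be the commutator bookkeeping in the second bullet --- the non-commutation of the $\nabla_k$ with the $\partial_{\eta_l}$; everything else is an application of the Leibniz rule together with the elementary inequality $\langle\eta\rangle_y\ge1$.
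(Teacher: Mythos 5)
Your argument is correct: the commutator identity $[\nabla_k,\partial_{\eta_l}]=-\sum_j\Gamma^l_{kj}(y)\,\partial_{\eta_j}$ follows directly from \eqref{horizontalLift}, and your Leibniz-plus-commutator bookkeeping (keeping the $\partial_\eta$'s outermost, with smooth $y$-dependent coefficients absorbed into the constants on the compact set $K$) yields exactly the estimates required by Definition \ref{defSym}. The paper states this proposition without proof, as a recollection of Safarov's calculus, and your write-up is precisely the standard argument that is being taken for granted there, so there is nothing to flag beyond noting that it correctly identifies the only non-formal point, the non-commutativity of $\nabla_k$ with the fibre derivatives.
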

\begin{rem}
    Notice that taking horizontal and vertical derivatives of a function naturally produces tensorial objects. For instance, if $a\in C^\infty(T^*M)$, then the collection $\{\partial_\eta^\alpha a\}_{|\alpha|=p}$ forms a $(p,0)$-tensor, while $\{\nabla_{i_{1}} \ldots \nabla_{i_{q}} a\}_q$ defines a $(0,q)$-tensor. For the horizontal derivatives, it is often convenient to work with the following equivalent expression:
\[
   \nabla_x^\alpha a(x,\eta) = \frac{d^\alpha}{dy^\alpha} a(y, \Phi_{y,x}\eta)\bigg|_{y=x},
\]
which, as shown in \cite[Corollary 2.5]{Safarov}, coincides with the iterated covariant derivative and will frequently appear in the formulas that follow.
\end{rem}
We continue with the definition of the phase function. 
\begin{defn}
    Let $V$ be a sufficiently small neighborhood of the diagonal $\Delta$ in $M\times M$. We introduce the phase functions 
$$\varphi_\tau(x,\zeta,y)= -\langle \dot{\gamma}_{y,x}(\tau), \zeta\rangle, \text{ where }(x,y)\in V,\quad \tau\in [0,1],\quad  \zeta\in T^*_{z_\tau}M.$$
\end{defn}
We point out that actually we have introduced a family of phase functions depending on the parameter $\tau$, this on the level of pseudo-differential operators will correspond to the different quantizations one can have. Now we are in position to define Safarov pseudo-differential operators. 
\begin{defn}\label{dfnPseudo}
    Let $0\leq \delta<\rho \leq 1$. Let $A: \Gamma_c(\Omega^\kappa)\to \Gamma(\Omega^\kappa)$ be a linear operator with Schwartz kernel $\mathscr{A}(x, y)$, i.e, $\langle Au, v\rangle = \langle\mathscr{A} , u\otimes v\rangle$. We say that $A$ is pseudo-differential if
      \begin{enumerate}
          \item $\mathscr{A}(x, y)$ is smooth in $(M\times M)\setminus \Delta$.
          \item On a neighborhood $V$ of $\Delta$ the Schwartz kernel is represented by an oscillatory integral of the form
          \begin{equation}\label{oscInt}
              \mathscr{A}(x, y)=\frac{1}{(2\pi)^n}p_{\kappa, \tau} \int_{T^*_{z_\tau}M} e^{i \varphi_{\tau}(x, \zeta, y)} a\left(z_{\tau}, \zeta\right)\,d \zeta,\text { for }(x, y) \in V,
          \end{equation}
          where $a\in S_{\rho, \delta}^m(\nabla)$, $p_{\kappa, \tau} = p_{\kappa, \tau}(x,y) = \Upsilon_{y, z_\tau}^{1-\kappa}\Upsilon_{z_\tau, x}^{-\kappa}$ and $d\zeta$ is as in \eqref{fiberForm}. 
      \end{enumerate}
We denote by $\Psi_{\rho, \delta}^m\left(\Omega^\kappa, \nabla, \tau\right)$ these classes of pseudo-differential operators. Moreover, we say that a pseudo-differential operator $A$ is properly supported if both projections $\pi_1,\pi_2: \operatorname{supp}\mathscr{A}\to M$ are proper maps. We denote by $\Psi^{-\infty}$ the class of operators with smooth kernels and we refer to them as \textit{smoothing operators}. 
\end{defn}
Let us elaborate on this definition for the sake of clarity. Explicitly, the previous definition tells us how pseudo-differential operators can be expressed in concrete terms. Let $\chi$ be a smooth cutoff function equal to $1$ on an open set $V$ and supported within a slightly larger open set. Using this, we define the smooth function
     \begin{equation}\label{kernelDef}
         K(x,y):= (1-\chi(x,y)) \mathscr{A}(x, y) \in C^\infty(M\times M),
     \end{equation}
     so that for any $u\in \Gamma_c(\Omega^\kappa)$ we can write the operator $A$ acting on $u$ as follows:
     \begin{align}\begin{split}
         Au(x) &= A_{loc}u(x) + A_{glo}u(x)\\
         &=\int_M \mathscr{A}(x, y) u(y) \chi(x,y) \,|dy| + \int_M K(x,y)u(y) \,|dy|\\
         &=\frac{1}{(2\pi)^n}\int_M \int_{T_{z_\tau}^*M}  p_{\kappa, \tau}e^{i \varphi_{\tau}(x, \zeta, y)} a\left(z_{\tau}, \zeta\right) \chi(x,y) u(y)d \zeta \,|dy|+ \int_M K(x,y)u(y) \,|dy|.
     \end{split} \label{kernels}     
     \end{align}

Now we briefly discuss how the classes $\Psi_{\rho, \delta}^m\left(\Omega^\kappa, \nabla, \tau\right)$ depend on the parameters $\nabla$ and $\tau$. On the one hand, as shown in \cite[Proposition 4.4]{Safarov}, whenever $0\leq \delta<\rho\leq 1$, the classes are independent of the quantization parameter $\tau$. This allows us to move freely between different quantizations, and for simplicity, we will denote these classes as $\Psi_{\rho, \delta}^m\left(\Omega^\kappa, \nabla\right)$.

On the other hand, if $\rho>1/2$, the classes become independent of the choice of connection $\nabla$, and one recovers the classical local H\"ormander classes. However, when $\rho\leq 1/2$, the dependence on $\nabla$ becomes essential. Even in this case, though, we will be able to choose suitable connections to derive meaningful and useful results, for instance, the Levi-Civita connection. 

In his paper, Safarov established numerous important properties of these pseudo-differential operators. However, for the purposes of our construction, we will restrict ourselves to recalling only those properties that are directly relevant to our work. First, products of operators. 
\begin{thm}{}
\label{producto}
Let $0\leq \delta<\rho \leq 1$. Let $A \in \Psi_{\rho, \delta}^{m_{1}}\left(\Omega^{\kappa}, \nabla\right), B \in \Psi_{\rho, \delta}^{m_{2}}\left(\Omega^{\kappa}, \nabla\right)$, and let at least one of these pseudo-differential operators be properly supported. Assume that at least one of the following conditions is fulfilled:
\begin{enumerate}
    \item $\rho>\frac{1}{2}$;
    \item the connection $\nabla$ is symmetric and $\rho>\frac{1}{3}$;
    \item the connection $\nabla$ is flat.
\end{enumerate}
Then $A B \in \Psi_{\rho, \delta}^{m_{1}+m_{2}}\left(\Omega^{\kappa}, \nabla\right)$ and
\begin{equation}
\label{asyExp}
    \sigma_{A B}(x, \xi) \sim \sum_{\alpha, \beta, \gamma} \frac{1}{\alpha !} \frac{1}{\beta !} \frac{1}{\gamma !} P_{\beta, \gamma}^{(\kappa)}(x, \xi) D_{\xi}^{\alpha+\beta} \sigma_{A}(x, \xi) D_{\xi}^{\gamma} \nabla_{x}^{\alpha} \sigma_{B}(x, \xi)
\end{equation}
as $\langle\xi\rangle_{x} \rightarrow \infty$. In particular, we have that 
\[
\sigma_{A B} - \sigma_A\sigma_B \in S_{\rho, \delta}^{m_1+m_2-r}(\nabla) 
\]
where 
\begin{equation}\label{rGenRhoMinusDelta}
    r=\begin{cases}
           \min\{\rho-\delta, 2\rho-1\} \, &\text{ under condition } (1)\\
           \min\{\rho-\delta, \frac{1}{2}(3\rho-1)\} \, &\text{ under condition } (2)\\
           \rho-\delta \, &\text{ in other cases.} 
       \end{cases}
\end{equation}

\end{thm}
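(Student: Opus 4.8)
The plan is to establish the composition formula by reducing the double oscillatory integral representing the kernel of $AB$ to a single one of the form \eqref{oscInt}, and then reading off the amplitude by a stationary phase / Kuranishi-type argument in which the connection-dependent error terms are organized according to their homogeneity. First I would reduce to a convenient situation. By the $\tau$-independence of the classes (\cite[Proposition 4.4]{Safarov}) I may fix, say, the quantization $\tau=0$, for which the phase $\varphi_0(x,\zeta,y)=-\langle\dot{\gamma}_{y,x}(0),\zeta\rangle$ with $\zeta\in T^*_xM$ is the geodesic analogue of $\langle x-y,\zeta\rangle$ (indeed for the trivial connection it is exactly that). Modulo smoothing operators --- which alter neither side --- I may assume that both Schwartz kernels are supported in a small neighbourhood $V$ of the diagonal, and, interchanging the roles of $A$ and $B$ if necessary (the case in which $B$ rather than $A$ is properly supported being handled by the same argument, or by passing to transposes), that $A$ is properly supported, so that $AB$ is a well-defined continuous operator $\Gamma_c(\Omega^\kappa)\to\Gamma(\Omega^\kappa)$.

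Next I would compose. Inserting the representations \eqref{oscInt} for $\mathscr{A}(x,z)$ and $\mathscr{B}(z,y)$ into the kernel formula $\mathscr{A}_{AB}(x,y)=\int_M \mathscr{A}(x,z)\mathscr{B}(z,y)\,|dz|$ produces a double oscillatory integral in frequencies $\zeta,\xi$ and an extra integration in $z$, with combined phase $\Phi=\varphi_0(x,\zeta,z)+\varphi_0(z,\xi,y)$ and amplitude the product of the two symbols times the density factor $p_{\kappa,0}(x,z)\,p_{\kappa,0}(z,y)$. As a function of $(z,\zeta)$ with $(x,y,\xi)$ frozen, $\Phi$ has a nondegenerate critical manifold: $\partial_\zeta\Phi=0$ forces $z$ onto the geodesic issuing from $x$ in the codirection $\zeta$, and $\partial_z\Phi=0$ identifies $\zeta$ with $\xi$ up to parallel transport and lower-order corrections. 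Applying the stationary-phase expansion in the directions transverse to the critical manifold --- equivalently, a change of variables reducing the $(z,\zeta)$-integral to a single integral in a new frequency $\eta\in T^*_xM$ --- I reduce $\mathscr{A}_{AB}(x,y)$ to the form \eqref{oscInt} with a new amplitude $c$, whose asymptotic expansion is the Leibniz-type series indexed by $\alpha,\beta,\gamma$ appearing in \eqref{asyExp}: the $\alpha$-index records the pairing of $D_\xi^\alpha$ on $\sigma_A$ against $\nabla_x^\alpha$ on $\sigma_B$ (the Kohn--Nirenberg terms), while the operators $P_{\beta,\gamma}^{(\kappa)}$ collect the contributions of the density factors $p_{\kappa,0}$, the Jacobian of the change of variables, and the geometric corrections; the $\kappa$-dependence sits entirely in $P_{\beta,\gamma}^{(\kappa)}$ through the $\Upsilon^{1-\kappa},\Upsilon^{-\kappa}$ factors, and the term $(\alpha,\beta,\gamma)=0$ is exactly $\sigma_A\sigma_B$.

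The heart of the matter, and the step I expect to be the main obstacle, is the geometric control of the composed phase. Working in $\nabla$-normal coordinates based at $x$, I would Taylor-expand $\Phi-\varphi_0(x,\eta,y)$: the discrepancy between the broken path $x\to z\to y$ and the geodesic $x\to y$ is governed by the Christoffel symbols, which vanish at the base point, so the first correction is controlled by the torsion $T$ and the next by the curvature $R$ (together with their covariant derivatives). Tracking homogeneities, each step of the standard flat-model expansion gains $\rho-\delta$, the torsion correction gains $2\rho-1$, and the curvature correction gains $\tfrac{1}{2}(3\rho-1)$; the delicate point is to verify that the resulting error amplitudes genuinely lie in $S_{\rho,\delta}^{m_1+m_2-r}(\nabla)$, which forces one to keep careful track of the powers of $\langle\eta\rangle_y$ produced by the horizontal derivatives of $T$ and $R$ against the symbolic orders. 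Since the gains $2\rho-1$ and $\tfrac{1}{2}(3\rho-1)$ are positive precisely under conditions (1) and (2) respectively, and both corrections are absent under condition (3), the three-case formula \eqref{rGenRhoMinusDelta} for $r$ falls out.

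Finally I would invoke the Borel-type summation lemma for the classes $S^m_{\rho,\delta}(\nabla)$ to realize the formal series by a genuine amplitude $c\in S^{m_1+m_2}_{\rho,\delta}(\nabla)$, use the nonstationary-phase estimate off the diagonal together with the standard remainder bound in the stationary-phase expansion to check that the difference between the true kernel of $AB$ and the oscillatory integral built from $c$ is smoothing, and conclude that $AB\in\Psi^{m_1+m_2}_{\rho,\delta}(\Omega^\kappa,\nabla)$, that its symbol satisfies \eqref{asyExp}, and in particular that $\sigma_{AB}-\sigma_A\sigma_B\in S^{m_1+m_2-r}_{\rho,\delta}(\nabla)$ with $r$ as stated.
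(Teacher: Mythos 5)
There is no ``paper's own proof'' to compare against here: Theorem \ref{producto} is stated in the Preliminaries and quoted from \cite{Safarov} without proof, so your attempt can only be measured against Safarov's original argument. Your outline does follow essentially that route (compose the two kernel representations \eqref{oscInt}, reduce the resulting double oscillatory integral to a single one centred on the diagonal, and let the Taylor expansion at $y=z=x$ of the geometric factor $e^{i\psi}\Upsilon_\kappa$ generate the coefficients $P^{(\kappa)}_{\beta,\gamma}$), so the strategy is the right one.

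The genuine gap is that the entire content of the theorem in the regime $\rho\le\tfrac12$ is asserted rather than proved. The exponents in \eqref{rGenRhoMinusDelta} -- and indeed the fact that \eqref{asyExp} is an asymptotic expansion at all -- come from a quantitative estimate you never establish: one must show that $P^{(\kappa)}_{\beta,\gamma}(x,\xi)$ is a polynomial in $\xi$ whose degree is controlled by $|\beta|+|\gamma|$ through the vanishing order of the phase defect $\psi$ at the diagonal (order $\ge 2$ in general, order $\ge 3$ when $T\equiv 0$, and $\psi\equiv 0$ in the flat case), so that the loss of powers of $\langle\xi\rangle_x$ from $P^{(\kappa)}_{\beta,\gamma}$ is beaten by the gain $\rho(|\beta|+|\gamma|)$ coming from $D_\xi^{\beta}$ and $D_\xi^{\gamma}$; the minima over admissible $(\beta,\gamma)$ of these net gains are exactly $2\rho-1$ and $\tfrac12(3\rho-1)$. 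Your sentences ``the torsion correction gains $2\rho-1$'' and ``the curvature correction gains $\tfrac12(3\rho-1)$'' are precisely what has to be proved and, as written, are read off from the conclusion; you yourself flag this verification as ``the main obstacle'' and then do not carry it out. Relatedly, you cannot invoke the stationary-phase expansion as a black box: for amplitudes in $S^{m}_{\rho,\delta}(\nabla)$ with $\rho\le\tfrac12$ the standard term-by-term gains and remainder bounds fail, and it is exactly the extra decay encoded in $P^{(\kappa)}_{\beta,\gamma}$ that restores both the ordering of the series and the smoothing remainder -- so the expansion must be justified together with, not prior to, the geometric estimates. Two smaller inaccuracies: in normal coordinates of a connection with torsion the Christoffel symbols do \emph{not} vanish at the base point (only their symmetric part does), which is precisely the mechanism by which $T$ enters the leading correction; and with both factors written in the $\tau=0$ quantization the stationary set of the composed phase in the intermediate variables is the single point $z=x$, $\zeta=\xi$ (after the obvious identification), not a geodesic issuing from $x$.
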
 
\begin{rem}
    The functions $P_{\beta, \gamma}^{(\kappa)}(x, \xi)$ are providing extra decay because they are polynomial in $\xi$ with coefficients depending on the curvature, torsion and its covariant derivatives, thus allowing us to consider smaller values of $\rho$. They are defined as follows: 
    \[
    P_{\beta, \gamma}^{(\kappa)}(x, \xi) = \left((\partial_y+\partial_z)^\beta \partial_y^\gamma \sum_{|\beta'|\leq |\beta|} \frac{1}{\beta'!}D_\xi^{\beta'}\partial_y^{\beta'}(e^{i\psi}\Upsilon_{\kappa})\right)\bigg|_{y=z=x}, 
    \]
    where 
    \begin{align*}
        &\Upsilon_{\kappa}(x,y,z)=\Upsilon_{y,z}^{1-\kappa}\Upsilon_{z,x}^{2-\kappa}\Upsilon_{x,y}^{1-\kappa}, \\
        \psi(x,\xi;y,&z)= \langle\dot{\gamma}_{y,x},\xi\rangle-\langle\dot{\gamma}_{z,x},\xi\rangle-\langle\dot{\gamma}_{y,z},\Phi_{z,x}\xi\rangle,
    \end{align*}
    in normal coordinates. 
\end{rem}
Now we give the $L^2$-boundedness result, which is extended to $L^p$-spaces by \cite[Theorem 1.8]{GomezCobosRuzhansky}.
\begin{thm}
\label{L2bound}
Let $0\leq \delta<\rho \leq 1$. Let at least one of Conditions (1)-(3) of Theorem \ref{producto} be fulfilled. Then $A\in \Psi_{\rho, \delta}^{m}\left(\Omega^{\kappa}, \nabla\right)$ is bounded from $H^s_{\operatorname{comp}}(M, \Omega^\kappa)$ to $H^{s-m}_{\operatorname{loc}}(M, \Omega^\kappa)$ for all $s\in \bbR$.
\end{thm}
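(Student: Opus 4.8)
The plan is to reduce the assertion to its $m=s=0$ instance — namely $L^2$-boundedness of zero-order operators — by conjugating with order-reduction operators, and to prove that base case by H\"ormander's square-root argument. Let me stress at the outset why one cannot simply quote the classical $L^2$-continuity of H\"ormander operators chart by chart: converting the covariant derivatives $\nabla_i$ of Definition \ref{defSym} into coordinate derivatives $\partial_x$ costs a power $\langle\eta\rangle_y^{1-\rho}$ (through the Christoffel terms in \eqref{horizontalLift}), so in a coordinate patch the symbol of $A$ lies only in the classical class $S^m_{\rho,\max(\delta,\,1-\rho)}$, which for $\rho\le\frac12$ falls outside the range of the Calder\'on--Vaillancourt theorem. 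The geometric hypotheses (1)--(3) enter the proof solely through the product formula \eqref{asyExp}: they make $\Psi^0_{\rho,\delta}(\Omega^\kappa,\nabla)$ a $*$-algebra and secure the strictly positive gain $r>0$ of \eqref{rGenRhoMinusDelta}.

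\emph{Step 1 ($m=0$).} After localizing by cut-offs (so that Schwartz kernels are compactly supported) and trivializing $\Omega^\kappa$ by a power of $|dx|$, we may work in an ordinary $L^2$ setting. Recall that $\Psi^0_{\rho,\delta}(\Omega^\kappa,\nabla)$ is closed under formal adjoints (a standard companion of Theorem \ref{producto}, following from the $\tau$-invariance of the classes applied to the kernel $\overline{\mathscr{A}(y,x)}$), with $\sigma_{A^*}=\overline{\sigma_A}$ to leading order; hence Theorem \ref{producto} gives $A^*A\in\Psi^0_{\rho,\delta}$ with $\sigma_{A^*A}=|\sigma_A|^2+e$, $e\in S^{-r}_{\rho,\delta}(\nabla)$, $r>0$ the number of \eqref{rGenRhoMinusDelta} attached to the hypothesis in force. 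Fix $M^2>\sup_{T^*M}|\sigma_A|^2+1$; then $M^2-|\sigma_A|^2\in S^0_{\rho,\delta}$ takes values in $[1,M^2]$, and since $S^0_{\rho,\delta}(\nabla)$ is stable under composition with functions smooth and bounded-with-derivatives on a neighbourhood of the symbol's range (chain rule for $\partial_\eta$ and for the $\nabla_i$, the $\langle\eta\rangle_y$-weights combining correctly), the symbol $b_0:=\sqrt{M^2-|\sigma_A|^2}\in S^0_{\rho,\delta}(\nabla)$ is bounded below. Let $B_0$ be a Safarov operator with symbol $b_0$. By \eqref{asyExp} and the adjoint formula, $\sigma_{B_0^*B_0}=b_0^2$ modulo $S^{-r}_{\rho,\delta}$, so $M^2I-A^*A-B_0^*B_0\in\Psi^{-r}_{\rho,\delta}$. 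Iterating — at stage $j$ one solves $2b_0 b_j=-(\text{the error in }S^{-jr}_{\rho,\delta})$ for $b_j\in S^{-jr}_{\rho,\delta}$, possible since $1/b_0\in S^0_{\rho,\delta}$ — and taking an asymptotic sum $b\sim\sum_{j\ge0}b_j\in S^0_{\rho,\delta}(\nabla)$, one gets a Safarov operator $B$ with symbol $b$ such that
\[
M^2I-A^*A-B^*B=R\in\Psi^{-\infty}.
\]
Therefore, for $u\in C_c^\infty$,
\[
\|Au\|_{L^2}^2=M^2\|u\|_{L^2}^2-\|Bu\|_{L^2}^2-\langle Ru,u\rangle\ \le\ \big(M^2+\|R\|_{L^2\to L^2}\big)\,\|u\|_{L^2}^2,
\]
and the smoothing operator $R$ is $L^2$-bounded (bounded compactly supported kernel, Schur's test). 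By density, every zero-order operator is bounded on $L^2$.

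\emph{Step 2 (general $m$, $s$).} For each $t\in\bbR$ fix an elliptic order-reduction operator $\Lambda_t$; one may take the classical $\langle D\rangle^t$, which lies in $\Psi^t_{1,0,\mathrm{loc}}(M,\Omega^\kappa)=\Psi^t_{1,0}(\Omega^\kappa,\nabla)\subset\Psi^t_{\rho,\delta}(\Omega^\kappa,\nabla)$ (the inclusion of symbol classes holding because $(\rho-1)|\alpha|\le\delta q$), or construct it inside the calculus as a parametrix of the elliptic symbol $\langle\eta\rangle_y^t$; these satisfy $\Lambda_t\Lambda_{-t}-I\in\Psi^{-\infty}$ and characterize the Sobolev spaces, $\|u\|_{H^t}\asymp\|\Lambda_t u\|_{L^2}$ up to smoothing errors on compacta. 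Given $A\in\Psi^m_{\rho,\delta}(\Omega^\kappa,\nabla)$ and $s\in\bbR$, split $A=A_{\mathrm{loc}}+A_{\mathrm{glo}}$ as in \eqref{kernels}: the smoothing part $A_{\mathrm{glo}}$ maps $H^s_{\operatorname{comp}}$ into every $H^t_{\operatorname{loc}}$, so we may assume $A$ properly supported. Then two applications of Theorem \ref{producto} (with the $\Lambda_t$ taken properly supported) give
\[
\Lambda_{s-m}\,A\,\Lambda_{-s}\ \in\ \Psi^{(s-m)+m-s}_{\rho,\delta}(\Omega^\kappa,\nabla)\ =\ \Psi^0_{\rho,\delta}(\Omega^\kappa,\nabla),
\]
which is $L^2$-bounded by Step 1. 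Hence, for $u\in H^s_{\operatorname{comp}}$ and a cut-off $\varphi\in C_c^\infty$,
\[
\|\varphi Au\|_{H^{s-m}}\ \lesssim\ \|(\Lambda_{s-m}A\Lambda_{-s})\,\Lambda_s u\|_{L^2}+\|u\|_{H^{s-1}}\ \lesssim\ \|u\|_{H^s},
\]
the lower-order term coming from the smoothing/order-reduction errors. This is exactly the asserted boundedness $A\colon H^s_{\operatorname{comp}}(M,\Omega^\kappa)\to H^{s-m}_{\operatorname{loc}}(M,\Omega^\kappa)$ for every $s\in\bbR$.

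\emph{Main difficulty.} All the substance is in Step 1. One must verify, within Safarov's calculus, the three pillars of the square-root trick — closedness under formal adjoints with symbol $\overline{\sigma_A}$ to leading order, stability of $S^0_{\rho,\delta}(\nabla)$ under composition with smooth functions, and asymptotic (Borel) summation of a descending family of symbols — and then check that the iteration really terminates in a \emph{smoothing} remainder $R$, so that the a priori estimate closes without circularity. Everything depending on the hypotheses (1)--(3) is absorbed into Theorem \ref{producto}: it is precisely the curvature/torsion polynomials $P^{(\kappa)}_{\beta,\gamma}$ that guarantee $r>0$, hence that $A^*A$ has principal symbol $|\sigma_A|^2$ with a genuinely lower-order error.
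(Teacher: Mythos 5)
Your proposal is essentially the route the paper itself points to: Theorem \ref{L2bound} is recalled from Safarov rather than proved here, and the paper explicitly describes that proof (in the proof of Theorem \ref{normParametrix}) as ``the classical argument by H\"ormander'' — i.e.\ the square-root trick for zero-order operators — while the reduction of general $(m,s)$ to the $L^2$ case via order-reducing operators is the same Bessel-potential device the paper uses in Subsection \ref{besselpotentials} and in Theorems \ref{normParametrix} and \ref{EstSoboErro}; so your argument is correct and matches that approach. The one ingredient you take on faith, closedness of the Safarov classes under formal adjoints with $\sigma_{A^*}=\overline{\sigma_A}$ modulo a gain $S_{\rho,\delta}^{-r}(\nabla)$, is indeed available in Safarov's calculus (the adjoint is the $(1-\tau)$-quantization of the conjugated symbol, and the classes are $\tau$-independent), so it is a legitimate citation rather than a gap; only note that $\sup|\sigma_A|$ should be taken over $\pi^{-1}(K)$ for the localized operator (finite by the compact-set dependence in Definition \ref{defSym}), not over all of $T^*M$ when $M$ is non-compact.
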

\begin{rem}
    If $M$ is compact, then the previous result extends globally, i.e. $A$ would be bounded from $H^s(M, \Omega^\kappa)$ to $H^{s-m}(M, \Omega^\kappa)$. 
\end{rem}
We close this subsection by presenting the ellipticity results for these classes of pseudo-differential operators.
\begin{defn}\label{ellip}
     Let $0\leq \delta<\rho \leq 1$, the space $HS_{\rho, \delta}^{m,m_0}(\nabla)$ denotes the subclass of symbols $a\in S_{\rho, \delta}^{m}(\nabla)$ satisfying the following conditions: for any compact subset $K\subset M$ there exist a positive constant $c_K$ such that 
         \[
        |a(y,\eta)^{-1}| \leq \mathrm{const}_{K} \langle\eta\rangle_{y}^{-m_0}, \, \text{ for all } (y,\eta)\in T^*M \text{ with } y\in K, \langle\eta\rangle_{y}\geq c_K,
         \]
         and 
         \[
         \left|\partial_{\eta}^{\alpha} \nabla_{i_{1}} \ldots \nabla_{i_{q}} a(y, \eta)\right| \leq \mathrm{const}_{K, \alpha, i_{1}, \ldots, i_{q}}\langle\eta\rangle_{y}^{\delta q-\rho|\alpha|}|a(y,\eta)|
         \]
         for all $\alpha, i_1, \ldots, i_q$, and $(y,\eta)\in T^*M \text{ with } y\in K, \langle\eta\rangle_{y}\geq c_K$, where again $\langle\eta\rangle_{y}:= (1 + w^2(y, \eta))^{1/2}$. 
         
         We will say that $a\in S_{\rho, \delta}^{m}(\nabla)$ is \textit{elliptic} if $a\in HS_{\rho, \delta}^{m,m}(\nabla)$. If $m\neq m_0$ the previous definition refers to \textit{hypoelliptic} symbols.
\end{defn}
The notion of ellipticity is useful to find parametrices of operators, i.e., to prove the invertibility of an operator modulo smoothing operators. Specifically, we say that an operator $B$ is a \textit{parametrix} of an operator $A$ if 
\[
AB=BA=I + S,
\]
where $S\in \Psi^{-\infty}$. Safarov proved the existence of parametrixes. 

\begin{thm}\label{Parametrix}
    Let $0\leq \delta<\rho \leq 1$. Let at least one of the conditions (1)-(3) of Theorem \ref{producto} be fulfilled. Then any $A\in H\Psi_{\rho,\delta}^{m,m_0}(\Omega^\kappa, \nabla)$ has a pseudodifferential parametrix $B\in H\Psi_{\rho,\delta}^{-m_0,-m}(\Omega^\kappa, \nabla)$. 
\end{thm}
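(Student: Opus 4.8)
The plan is to construct the parametrix at the level of symbols, using the composition formula of Theorem \ref{producto} to run a Neumann-type iteration whose convergence is governed by the strict positivity of the order gain $r$ in \eqref{rGenRhoMinusDelta} --- the only place where one of Conditions (1)--(3) actually enters.

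\emph{Leading term.} After replacing $A$ by a properly supported representative (legitimate modulo $\Psi^{-\infty}$), set $a=\sigma_A\in HS_{\rho,\delta}^{m,m_0}(\nabla)$ and fix a cutoff $\chi\in C^\infty(T^*M)$ vanishing near the zero section and equal to $1$ where the estimates of Definition \ref{ellip} apply. Differentiating $1/a$ via the Fa\`a di Bruno / Leibniz rule and feeding in the hypoellipticity estimates for $a$ shows that $b_0:=\chi\,a^{-1}\in HS_{\rho,\delta}^{-m_0,-m}(\nabla)$; let $B_0\in H\Psi_{\rho,\delta}^{-m_0,-m}(\Omega^\kappa,\nabla)$ be a properly supported operator with symbol $b_0$. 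Since $ab_0=1$ outside a compact set of frequencies, the leading term of the expansion \eqref{asyExp} for $\sigma_{AB_0}$ equals $1$, and --- using the hypoellipticity of $a$ and $b_0$, which forces every further term of \eqref{asyExp} to have strictly negative order --- we obtain $AB_0=I+R$ with $R\in\Psi_{\rho,\delta}^{-r}(\Omega^\kappa,\nabla)$, $r>0$ as in \eqref{rGenRhoMinusDelta}.

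\emph{Asymptotic inversion.} Because $R\in\Psi_{\rho,\delta}^{-r}$, the powers $(-R)^k$ lie in $\Psi_{\rho,\delta}^{-kr}$ with orders decreasing to $-\infty$. A Borel-type asymptotic summation inside the classes $S_{\rho,\delta}^m(\nabla)$ produces $C\in\Psi_{\rho,\delta}^{0}(\Omega^\kappa,\nabla)$ with $C\sim\sum_{k\geq 0}(-R)^k$; from $(I+R)\sum_{k=0}^{N}(-R)^k=I+(-1)^{N}R^{N+1}$ together with $\bigcap_{N}\Psi^{-Nr}=\Psi^{-\infty}$ one concludes $(I+R)C=I+S$ with $S\in\Psi^{-\infty}$. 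Hence $B_r:=B_0C$ is a right parametrix, $AB_r=(I+R)C=I+S$. Its full symbol is $a^{-1}$ plus terms of strictly lower order that are again built from $a^{-1}$ and the derivatives of $a$, so --- by the very estimates used for $b_0$ --- it still satisfies the bounds of Definition \ref{ellip} and $B_r\in H\Psi_{\rho,\delta}^{-m_0,-m}(\Omega^\kappa,\nabla)$. Repeating the construction with $B_0\circ A$ in place of $A\circ B_0$ produces a left parametrix $B_\ell\in H\Psi_{\rho,\delta}^{-m_0,-m}(\Omega^\kappa,\nabla)$ with $B_\ell A=I+S'$, $S'\in\Psi^{-\infty}$.

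\emph{Reconciliation.} By associativity, $B_\ell=B_\ell(AB_r)-B_\ell S=(I+S')B_r-B_\ell S=B_r+(S'B_r-B_\ell S)$, and since $\Psi^{-\infty}$ is a two-sided ideal the parenthesis is smoothing; thus $B:=B_r$ is a genuine two-sided parametrix, $AB=BA=I$ modulo $\Psi^{-\infty}$, with $B\in H\Psi_{\rho,\delta}^{-m_0,-m}(\Omega^\kappa,\nabla)$, which is the assertion. The substantive ingredients are the Borel-type asymptotic-summation lemma for the Safarov classes (a summation argument adapted to the weight $\langle\eta\rangle_{y}$ and to the covariant derivatives $\nabla_{i_k}$) and the bookkeeping that the iteration truly gains order at each step; that gain is precisely the positivity of $r$ in \eqref{rGenRhoMinusDelta}, so without one of Conditions (1)--(3) --- e.g. a symmetric connection but $\rho\leq\frac{1}{3}$ --- Theorem \ref{producto} no longer controls the composition remainder below the top order and the construction cannot be closed.
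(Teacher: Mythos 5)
Your construction is correct and is essentially the standard one the paper relies on: the theorem itself is quoted from Safarov without proof, but the argument you give (cutoff times $a^{-1}$ as leading term, hypoellipticity to make every further term of \eqref{asyExp} strictly negative order, Neumann-type asymptotic summation, then reconciliation of left and right parametrices) is exactly the scheme the authors reproduce in their proof of Theorem \ref{realEstiParameSymbol}, which explicitly "follows the proof of Theorem \ref{Parametrix}". You also correctly isolate the role of Conditions (1)--(3) as guaranteeing $r>0$ in \eqref{rGenRhoMinusDelta}, so no further comparison is needed.
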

\subsection{Bessel potentials}\label{besselpotentials}
In this subsection, we briefly recall the definition of Bessel potentials, discuss how they fit into the framework of Safarov's classes, and describe their relationship with Sobolev spaces. For further details, we refer the reader to, for example, \cite{heb, stri}. We denote by $\Psi_{\rho, \delta, loc}^{\lambda}$ the classical local H\"ormander classes on manifolds. 

Let  $(M,g)$ be a complete Riemannian manifold, recall that the Laplace-Beltrami operator associated to the metric $g$ is given by 
\[
\mathcal{L}u(x) = \frac{1}{g(x)}\partial_{x^i}\left(g(x) g^{ij}(x)\partial_{x^j}u(x)\right),
\]
where $g_{ij}$ and $g^{ij}$ are the metric and inverse metric coefficients, respectively, and $g(x):=\sqrt{|\operatorname{det}g_{ij}(x)|}$. Notice that we can define $\mathcal{L}$ on $\kappa$-densities due to our fixed section $|dx|$ as follows: $|dx|^\kappa \mathcal{L}(u|dx|^{-\kappa})$. The Bessel potential is defined as the operator
\[
\mathcal{B} = (1-\mathcal{L})^{1/2}. 
\]
Now we list some of the properties of the operators $\mathcal{L}$ and $\mathcal{B}$. 
\begin{itemize}
    \item $\mathcal{L}\in  \Psi_{1, 0}^{2}\left(\Omega^{\kappa}, \nabla\right)$, and for the Levi-Civita connection $\nabla_{LC}$ its symbol is given by
    \[
    a_{\mathcal{L},\tau}(x,\xi)=-|\xi|_x^2+\frac{1}{3}S(x),
    \]
    where $|\xi|_x^2=g^{ij}(x)\xi_i\xi_j$ and $S$ is the scalar curvature. For another connection we will have to add some more terms related to Christoffel symbols. 
    \item Classically $\mathcal{B}^\ell\in \Psi_{1, 0, loc}^{\ell}\left(\Omega^{\kappa}\right)$ for any $\ell\in \bbR$, therefore $\mathcal{B}^\ell\in \Psi_{1, 0}^{\ell}\left(\Omega^{\kappa},\nabla\right)$ for any connection $\nabla$  and any $\ell\in \bbR$ since Safarov classes coincide with the classical ones when $\rho>1/2$. Hence, the obvious inclusion $\Psi_{1, 0}^{m}\left(\Omega^{\kappa}, \nabla\right)\subseteq \Psi_{\rho, \delta}^{m}\left(\Omega^{\kappa}, \nabla\right)$ for $0\leq \delta<\rho\leq 1$ gives us that $\mathcal{B}^\ell\in \Psi_{\rho, \delta}^{\ell}\left(\Omega^{\kappa}, \nabla\right)$ for all $\nabla$ and $\ell \in\bbR$. 
    \item For every $\ell\in\bbR$ the operator $\mathcal{B}^\ell$ is an isomorphism from the Sobolev space $H_{loc}^\ell(M,\Omega^\kappa)$ to $L_{loc}^2(M, \Omega^\kappa)$, which helps us at the moment of calculating these norms.  
\end{itemize}

\subsection{A trace on \texorpdfstring{$\Psi^{<-n}$}{L}}\label{traceSubsection} 
Finally, we construct a trace on the algebra of Safarov pseudo-differential operators. We will use it for our applications in Section \ref{Section 5}. In this part we refer the reader to \cite{Scott} for more details and generalities about traces and determinants on differential graded algebras. In this subsection we assume that $M$ is compact. 

The construction of traces, and more generally quasi-traces, on algebras of pseudo-differential operators depends on the knowledge of the singularity structure of their Schwartz kernel around its singular support. That is why, if we look at the formula \eqref{kernels}, we will just use the kernel $\mathscr{A}$ and not $K$. The trace will be defined for pseudo-differential operators of order $<-n$, where $n$ is the dimension of $M$, because for them the regularization process one applies in order to makes sense of the oscillatory integral \eqref{oscInt} is not needed. 

Let $-\mathfrak{n}<-n$. Let $A\in \Psi_{\rho, \delta}^{-\mathfrak{n}}\left(\Omega^{\kappa}, \nabla\right)$, then by Theorem \ref{L2bound}, $A$ is bounded from $H^{-\mathfrak{n}/2}$ to $H^{\mathfrak{n}/2}$, so that the function 
\begin{align*}
    H^{\mathfrak{n}/2} \times M &\to \bbC\\
    (\varphi, x)\hspace{0.2cm}&\mapsto \varphi(x)
\end{align*}
is continuous. Hence the kernel $\mathscr{A}$ is a continuous function
\[
x \mapsto \mathscr{A}(x, \cdot)
\]
from $M$ to $H^{\mathfrak{n}/2}$ such that for $f\in H^{-\mathfrak{n}/2}$ 
\[
Au(x)= \int_M \mathscr{A}(x, y) f(y) \,|dy| + \int_M K(x,y)f(y)\, |dy|.
\]
Furthermore, the integral 
\[
\int_M \mathscr{A}(x, x)\, |dx| 
\]
is well defined. 
\begin{defn}[Construction of trace]
Let $\Psi^{<-n}$ denote the union of all the classes of pseudo-differential operators with order strictly less than $-n$. Then 
\begin{align}\label{trazaDef}
\begin{split}
    \tr: \Psi^{<-n} &\to \bbC\\
    A&\mapsto \int_M \mathscr{A}(x, x)\, |dx|
\end{split}  
\end{align}
is a well-defined trace. We say that such operators are of trace class. 
\end{defn}
\begin{rem}\label{smoothIsTrace}
    Notice that $\Psi^{-\infty}\subset \Psi^{<-n}$, hence all smoothing operators are trace class, and moreover this trace recovers the classical trace on smoothing operators. 
\end{rem}

Later on we will be interested on computing such traces and getting asymptotic expansions for them, so let us provide some simple results in that direction. 

\begin{prop}\label{kernelEnxx}
    Let $0\leq \delta<\rho \leq 1$ and $-\mathfrak{n}<-n$. Let $A\in \Psi_{\rho, \delta}^{-\mathfrak{n}}\left(\Omega^{\kappa}, \nabla\right)$, then 
    \[
    \mathscr{A}(x,x) = \frac{1}{(2\pi)^n}\int_{T_x^*M} a(x,\zeta) \, d\zeta.
    \]
\end{prop}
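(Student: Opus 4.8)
The plan is to evaluate the oscillatory integral representation \eqref{oscInt} on the diagonal $x=y$ and check that all the geometric prefactors degenerate to $1$. When $x=y$, the shortest geodesic $\gamma_{y,x}$ is constant, so $z_\tau = \gamma_{x,x}(\tau) = x$ for every $\tau\in[0,1]$. Hence the parallel transport $\Phi_{y,z_\tau}$ and $\Phi_{z_\tau,x}$ are both the identity map on $T_x^*M$, which gives $\Upsilon_{y,z_\tau} = \Upsilon_{z_\tau,x} = 1$ and therefore
\[
p_{\kappa,\tau}(x,x) = \Upsilon_{y,z_\tau}^{1-\kappa}\Upsilon_{z_\tau,x}^{-\kappa}\Big|_{y=x} = 1.
\]
Likewise the phase function satisfies $\varphi_\tau(x,\zeta,x) = -\langle \dot{\gamma}_{x,x}(\tau),\zeta\rangle = 0$ since the constant geodesic has zero velocity, so $e^{i\varphi_\tau(x,\zeta,x)} = 1$. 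Substituting these into \eqref{oscInt} yields directly
\[
\mathscr{A}(x,x) = \frac{1}{(2\pi)^n}\int_{T_x^*M} a(x,\zeta)\,d\zeta,
\]
which is the claimed formula. Note that since $A\in\Psi_{\rho,\delta}^{-\mathfrak{n}}(\Omega^\kappa,\nabla)$ with $-\mathfrak{n}<-n$, the amplitude $a\in S_{\rho,\delta}^{-\mathfrak{n}}(\nabla)$ decays like $\langle\zeta\rangle_x^{-\mathfrak{n}}$ with $\mathfrak{n}>n$, so the fiber integral converges absolutely and no regularization of the oscillatory integral is needed; this is precisely why the diagonal restriction is legitimate and the preceding discussion guarantees $\mathscr{A}(\cdot,\cdot)$ is continuous near $\Delta$.

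A small point worth addressing carefully is the relationship between the local oscillatory-integral piece and the smooth global piece $K(x,y)$ from \eqref{kernelDef}. On the diagonal $\chi\equiv 1$, so $K(x,x) = (1-\chi(x,x))\mathscr{A}(x,x) = 0$; thus evaluating the full kernel on the diagonal coincides with evaluating the oscillatory integral there, and there is no ambiguity in writing $\mathscr{A}(x,x)$. I would also remark that the independence of the classes on the quantization parameter $\tau$ (Safarov's \cite[Proposition 4.4]{Safarov}) is consistent with the formula, since the right-hand side manifestly does not depend on $\tau$; indeed the $\tau$-dependence of \eqref{oscInt} is carried entirely by $z_\tau$, $\varphi_\tau$ and $p_{\kappa,\tau}$, all of which trivialize on the diagonal.

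The main (and only) obstacle here is purely a matter of bookkeeping: making sure that the symbol $a$ appearing in \eqref{oscInt} is genuinely evaluated at $(x,\zeta)$ rather than at some transported argument, and that the measure $d\zeta = d\zeta_x$ in \eqref{fiberForm} is the one intrinsically attached to the fiber $T_x^*M$. Since $z_\tau = x$ on the diagonal, $a(z_\tau,\zeta) = a(x,\zeta)$ and the integration is over $T_{z_\tau}^*M = T_x^*M$ against $d\zeta_x = \omega(x)^{-1}d\zeta_1\wedge\cdots\wedge d\zeta_n$, so everything matches the stated right-hand side verbatim. No estimates beyond the absolute convergence already guaranteed by $-\mathfrak{n}<-n$ are required.
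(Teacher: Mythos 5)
Your argument is correct and is exactly the computation the paper has in mind when it says the result is ``immediate from Definition \ref{dfnPseudo} and previous considerations'': on the diagonal $z_\tau=x$, the phase and the density prefactor $p_{\kappa,\tau}$ trivialize, and the fiber integral converges absolutely because the order $-\mathfrak{n}<-n$. You have simply written out in full the details the paper leaves implicit, including the correct observations about $K(x,x)=0$ and the $\tau$-independence.
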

\begin{proof}
    Immediate from Definition \ref{dfnPseudo} and previous considerations. 
\end{proof}
Hence Proposition \ref{kernelEnxx} and Definition \ref{trazaDef} imply right away the following result:
\begin{thm}\label{traza}
    Let $0\leq \delta<\rho \leq 1$ and $-\mathfrak{n}<-n$. Let $A\in \Psi_{\rho, \delta}^{-\mathfrak{n}}\left(\Omega^{\kappa}, \nabla\right)$, then the trace of $A$ is given by 
    \[
    \tr A = \frac{1}{(2\pi)^n} \int_{T^*M} a(x,
    \zeta) \, \Omega, 
    \]
    where $\Omega$ is as in \eqref{cotangentBundleForm}. 
\end{thm}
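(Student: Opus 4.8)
The plan is to reduce \texttt{Theorem \ref{traza}} directly to \texttt{Proposition \ref{kernelEnxx}} and \texttt{Definition \ref{trazaDef}}, so the only real content is an honest bookkeeping of densities and a Fubini-type argument. First I would observe that since $A\in\Psi_{\rho,\delta}^{-\mathfrak n}(\Omega^\kappa,\nabla)$ with $-\mathfrak n<-n$, all the hypotheses of \texttt{Proposition \ref{kernelEnxx}} are met, so the restriction of the Schwartz kernel to the diagonal is given pointwise by
\[
\mathscr A(x,x)=\frac{1}{(2\pi)^n}\int_{T_x^*M}a(x,\zeta)\,d\zeta,
\]
where $d\zeta=d\zeta_x$ is the fibrewise measure from \eqref{fiberForm}. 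This is a genuine (absolutely convergent) fibre integral because $a\in S_{\rho,\delta}^{-\mathfrak n}(\nabla)$ gives $|a(x,\zeta)|\lesssim\langle\zeta\rangle_x^{-\mathfrak n}$ locally uniformly in $x$, and $\mathfrak n>n$ makes $\langle\zeta\rangle_x^{-\mathfrak n}$ integrable on each $n$-dimensional fibre; moreover the same bound, together with the local uniformity of the constants on compact sets and compactness of $M$, shows that $x\mapsto\mathscr A(x,x)$ is continuous (indeed one can differentiate under the integral), so $\int_M\mathscr A(x,x)\,|dx|$ is a well-defined finite integral — this is exactly the well-definedness already asserted before \texttt{Definition \ref{trazaDef}}.

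Next I would substitute this expression into the definition of the trace:
\[
\tr A=\int_M\mathscr A(x,x)\,|dx|=\frac{1}{(2\pi)^n}\int_M\left(\int_{T_x^*M}a(x,\zeta)\,d\zeta\right)|dx|.
\]
Now comes the one point that needs a word of justification: recognizing the iterated integral $\int_M\big(\int_{T_x^*M}(\cdot)\,d\zeta\big)\,|dx|$ as the single integral over the total space $T^*M$ against the canonical $2n$-form $\Omega$ of \eqref{cotangentBundleForm}. By definition $\Omega=d\zeta_x\wedge|dx|$, so in any local coordinate chart $\{(x^k,\zeta_k)\}$ this is precisely the product of the fibre measure $\omega(x)^{-1}\,d\zeta_1\cdots d\zeta_n$ with the base density $\omega(x)\,|dx^1\wedge\cdots\wedge dx^n|$; the factors $\omega(x)^{\pm1}$ cancel and one gets Lebesgue measure $d\zeta_1\cdots d\zeta_n\,dx^1\cdots dx^n$ on the coordinate patch of $T^*M$, which is exactly the coordinate form of $\Omega$. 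Hence, by Fubini's theorem on each chart (legitimate by the absolute integrability just established) and a partition of unity subordinate to a trivializing cover of $T^*M\to M$, the iterated integral equals $\int_{T^*M}a(x,\zeta)\,\Omega$, giving
\[
\tr A=\frac{1}{(2\pi)^n}\int_{T^*M}a(x,\zeta)\,\Omega.
\]

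There is no serious obstacle here; the statement is essentially a repackaging of \texttt{Proposition \ref{kernelEnxx}}. The only place that deserves care — and the step I would expect a referee to scrutinize — is the identification of the product measure ``fibre measure $\times$ base density'' with the canonical $2n$-form $\Omega$, since the fibre measure \eqref{fiberForm} was deliberately twisted by $\omega(x)^{-1}$ precisely so that this product is coordinate-independent and globally defined; one should check that the twisting is consistent with $\Omega$ being a well-defined density (or form up to orientation) on $T^*M$, which is exactly what \eqref{cotangentBundleForm} records. Everything else (absolute convergence, continuity of $x\mapsto\mathscr A(x,x)$, applicability of Fubini, compactness of $M$) follows from the symbol estimates of \texttt{Definition \ref{defSym}} with $m=-\mathfrak n<-n$.
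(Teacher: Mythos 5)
Your proposal is correct and follows exactly the route the paper takes: the paper derives Theorem \ref{traza} immediately by combining Proposition \ref{kernelEnxx} with Definition \ref{trazaDef}, which is precisely your argument (you merely spell out the convergence and the cancellation of the $\omega(x)^{\pm 1}$ factors in identifying the iterated integral with $\int_{T^*M}a\,\Omega$, details the paper leaves implicit).
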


\section{Resolvent of pseudo-differential operators}\label{Section 3}
In this section, we analyze key properties of the resolvent operators $(A-\lambda I)^{-1}$, where $A\in \Psi_{\rho, \delta}^{m}\left(\Omega^{\kappa}, \nabla\right)$. This study will enable us to define and investigate the properties of the Dunford–Riesz integral:
\begin{equation*}
    f(A) = \frac{1}{2\pi i}\int_{\Gamma} f(\lambda) (A-\lambda I)^{-1}\, d\lambda. 
\end{equation*}
Specifically, we establish symbol-like estimates for the parameter symbol of the resolvent and subsequently construct a parameter-parametrix that approximates the resolvent operator. Henceforth, given a symbol $a\in S_{\rho, \delta}^m(\nabla)$, we denote by $a(X,D)$ the corresponding pseudo-differential operator in the class $\Psi_{\rho, \delta}^{m}\left(\Omega^{\kappa}, \nabla\right)$.

\subsection{Parameter-ellipticity}\label{parameter-ellipticity}
First of all, such Dunford-Riesz integral can only make sense if we have some control on the resolvent operator. Explicitly, what we will need is the notion of \textit{Agmon angle} or \textit{ray of minimal growth} \cite{Agmon}. 
\begin{defn}\label{agmonAngle}
    Let $H$ be a Hilbert space. A ray $R_\theta=\{\lambda\in \bbC : \operatorname{arg}(\lambda)=\theta\}$ is called a ray of minimal growth for an operator $A:D(A)\subseteq H\to H$, if $R_\theta$ belongs to the resolvent set of $A$ and 
    \begin{equation}\label{ineWeWant}
    \|(A-\lambda I)^{-1}\|_{H\to H}\leq \frac{C}{|\lambda|}
\end{equation}

for some positive constant $C$, and all $\lambda\in R_\theta\setminus\{0\}$. 
\end{defn} 
 For this reason, the concept of parameter-ellipticity is introduced, so that we can perform a rigorous study of the symbol of the resolvent and, actually, what we achieve is to construct a parametrix $A_\lambda^\sharp$ that approximates the resolvent (see Corollary \ref{normResolvent}). We will see that there will be substancial diferences between operators of positive order and operators of order zero, mainly because the latter are bounded due to Theorem \ref{L2bound}. For now, we consider $M$ to be an $n$-dimensional smooth manifold. Additional properties of $M$ will be introduced as needed throughout the section.
\begin{defn}\label{paraElliDef}
    Let $0\leq \delta<\rho \leq 1$ and let $\Lambda\subset \bbC$ be a sector. 
    \begin{itemize}
        \item For $m>0$, we say that $a \in S_{\rho, \delta}^{m}(\nabla)$ is \textit{parameter-elliptic} with respect to $\Lambda$ if for any compact subset $K\subset M$ there exist a positive constant $c_K$ such that 
         \[
        |\left(a(y,\eta)-\lambda \right)^{-1}| \leq \mathrm{const}_{K} (|\lambda|^{1/m} + \langle\eta\rangle_{y})^{-m}, 
        \]
        for all $\lambda\in\Lambda$ and $(y,\eta)\in T^*M \text{ with } y\in K, |\lambda|+\langle\eta\rangle_{y}\geq c_K$. 
         \item For $m=0$, we say that $a \in S_{\rho, \delta}^{m}(\nabla)$ is \textit{parameter-elliptic} with respect to $\Lambda$ if for any compact subset $K\subset M$ there exist a positive constant $c_K$ such that 
         \[
        |\left(a(y,\eta)-\lambda \right)^{-1}| \leq \mathrm{const}_{K} (|\lambda|+1)^{-1}, 
        \]
        for all $\lambda\in\Lambda$ and $(y,\eta)\in T^*M \text{ with } y\in K, |\lambda|\geq c_K$. 
    \end{itemize} 
\end{defn}

As we mention in the introduction, we are not exaclty constructing parametric classes but still from parameter-ellipticity we can obtain some symbol-like estimates. 

\begin{thm}\label{ParaEllEst}
    Let $0\leq \delta<\rho \leq 1$, $m\geq 0$ and let $\Lambda\subset \bbC$ be a sector. Assume that $a\in S_{\rho, \delta}^{m}(\nabla)$ is parameter-elliptic with respect to $\Lambda$.
    \begin{itemize}
        \item If $m>0$, then for any compact subset $K\subset M$ there exist a positive constant $c_K$ such that 
         \[
        |\partial_\lambda^k\partial_{\eta}^{\alpha} \nabla_{i_{1}} \ldots \nabla_{i_{q}}\left(a(y,\eta)-\lambda \right)^{-1}| \leq C_{K, k, \alpha, i_1, \ldots, i_q} (|\lambda|^{1/m} + \langle\eta\rangle_{y})^{-m(k+1)}\langle\eta\rangle_{y}^{\delta q-\rho|\alpha|}, 
        \]
       for all $k, \alpha, i_1, \ldots, i_q$, $\lambda\in\Lambda$ and $(y,\eta)\in T^*M \text{ with } y\in K, |\lambda|+\langle\eta\rangle_{y}\geq c_K$. In particular, for each fixed $\lambda\in\Lambda$ we have that $(a-\lambda)^{-1}\in S_{\rho, \delta}^{-m}(\nabla)$ for large $\eta$. 
       \item If $m=0$, then for any compact subset $K\subset M$ there exist a positive constant $c_K$ such that 
         \[
        |\partial_\lambda^k\partial_{\eta}^{\alpha} \nabla_{i_{1}} \ldots \nabla_{i_{q}}\left(a(y,\eta)-\lambda \right)^{-1}| \leq C_{K, k, \alpha, i_1, \ldots, i_q} (|\lambda| + 1)^{-(k+1)}\langle\eta\rangle_{y}^{\delta q-\rho|\alpha|}, 
        \]
       for all $k, \alpha, i_1, \ldots, i_q$, $\lambda\in\Lambda$ and $(y,\eta)\in T^*M \text{ with } y\in K, |\lambda|\geq c_K$. In particular, for each fixed $\lambda\in\Lambda$ we have that $(a-\lambda)^{-1}\in S_{\rho, \delta}^{0}(\nabla)$. 
    \end{itemize}

\end{thm}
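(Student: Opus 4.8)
The plan is to reduce everything to the scalar estimate in Definition \ref{paraElliDef} by differentiating the resolvent symbol $b(y,\eta,\lambda):=(a(y,\eta)-\lambda)^{-1}$ and controlling each term by the Leibniz/Fa\`a di Bruno combinatorics. I will treat the case $m>0$ in detail; the case $m=0$ is identical with $|\lambda|^{1/m}+\langle\eta\rangle_y$ replaced everywhere by $|\lambda|+1$ (equivalently, one formally puts $m=1$ in the first estimate after noting that, for $m=0$, the symbol $a$ is bounded so $|\lambda|$ dominates).

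First, I would record the two elementary facts I will use repeatedly: (i) the $\lambda$-derivatives are trivial, $\partial_\lambda^k b = k!\,b^{k+1}$, so $|\partial_\lambda^k b|\le k!\,|b|^{k+1}\le \mathrm{const}\,(|\lambda|^{1/m}+\langle\eta\rangle_y)^{-m(k+1)}$ directly from parameter-ellipticity; (ii) for the mixed derivatives I use the identity $\nabla_i b = -b\,(\nabla_i a)\,b$ and $\partial_{\eta_j} b = -b\,(\partial_{\eta_j}a)\,b$, which iterates: applying a string of $q$ horizontal derivatives $\nabla_{i_1}\cdots\nabla_{i_q}$ and a multi-index $\partial_\eta^\alpha$ (and $k$ copies of $\partial_\lambda$) to $b$ produces, by the generalized Leibniz rule, a finite sum of terms each of the shape
\[
\pm\, c\, b^{\,1+\ell}\prod_{t=1}^{\ell}\bigl(\partial_\eta^{\alpha_t}\nabla_{i_{(t)}} a\bigr),
\]
where $\ell\le k+q+|\alpha|$ (more precisely $\ell$ equals the total number of differentiations distributed onto the factors, with one extra power of $b$ from the outermost differentiation), the $\alpha_t$ partition $\alpha$, and the horizontal indices $i_{(t)}$ partition $\{i_1,\dots,i_q\}$; moreover there are $k$ "empty" differentiations raising the power of $b$ by $k$. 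The combinatorial bookkeeping — which I would state as a lemma, proved by induction on $k+q+|\alpha|$ — shows the total count $\ell$ of derivative factors satisfies $\sum_t|\alpha_t|=|\alpha|$ and $\sum_t q_t=q$ (with $q_t$ the number of horizontal indices on the $t$-th factor), and the number of $b$-factors is exactly $1+\ell+k$ with $\ell\le q+|\alpha|$. One should be slightly careful that the $\nabla_i$ do not commute with multiplication by $a$ in the naive way — but they obey the Leibniz rule as derivations, which is all that is used, and the curvature terms that arise when commuting $\nabla$'s past $\partial_\eta$'s are themselves smooth tensor fields bounded on the compact set $K$, so they only affect the constants.

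Next comes the estimation of a single such term. For the $b$-factors I bound $|b|^{1+\ell+k}\le \mathrm{const}\,(|\lambda|^{1/m}+\langle\eta\rangle_y)^{-m(1+\ell+k)}$ using parameter-ellipticity $1+\ell+k$ times. For each derivative factor $\partial_\eta^{\alpha_t}\nabla_{(t)} a$, since $a\in S^m_{\rho,\delta}(\nabla)$ Definition \ref{defSym} gives $|\partial_\eta^{\alpha_t}\nabla_{(t)} a|\le \mathrm{const}\,\langle\eta\rangle_y^{\,m+\delta q_t-\rho|\alpha_t|}$. Multiplying, the total exponent from the derivative factors is $\sum_t(m+\delta q_t-\rho|\alpha_t|)=\ell m+\delta q-\rho|\alpha|$. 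Combining with the $b$-factors and noting $\langle\eta\rangle_y\le |\lambda|^{1/m}+\langle\eta\rangle_y$ so that the positive power $\langle\eta\rangle_y^{\ell m}$ is absorbed into $(|\lambda|^{1/m}+\langle\eta\rangle_y)^{\ell m}$, the $\ell m$ contributions cancel and the term is bounded by
\[
\mathrm{const}\,(|\lambda|^{1/m}+\langle\eta\rangle_y)^{-m(1+\ell+k)+\ell m}\langle\eta\rangle_y^{\delta q-\rho|\alpha|}
=\mathrm{const}\,(|\lambda|^{1/m}+\langle\eta\rangle_y)^{-m(k+1)}\langle\eta\rangle_y^{\delta q-\rho|\alpha|},
\]
which is exactly the claimed estimate. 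Summing over the finitely many terms (their number depends only on $k,\alpha,i_1,\dots,i_q$) and over finitely many compact charts covering $K$ gives the bound with a constant $C_{K,k,\alpha,i_1,\dots,i_q}$. The final "in particular" assertion is then immediate: fixing $\lambda\in\Lambda$ and taking $\langle\eta\rangle_y$ large, $(|\lambda|^{1/m}+\langle\eta\rangle_y)^{-m(k+1)}\le \langle\eta\rangle_y^{-m}$ when $k=0$ (and is even better for $k\ge1$), so $(a-\lambda)^{-1}$ satisfies the $S^{-m}_{\rho,\delta}(\nabla)$ estimates of Definition \ref{defSym} for large $\eta$; for $m=0$ the constant $(|\lambda|+1)^{-1}$ is harmless and one lands in $S^0_{\rho,\delta}(\nabla)$.

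The main obstacle is the combinatorial lemma controlling how the powers of $b$ and the derivative loads on the factors are distributed — in particular verifying the crucial balance $\ell m$ (from the positive powers $\langle\eta\rangle_y^{m}$ on each of the $\ell$ derivative factors) against the $\ell$ extra powers of $(|\lambda|^{1/m}+\langle\eta\rangle_y)^{-m}$ from the $b$-factors, which is what produces exactly the exponent $-m(k+1)$ independent of $q$ and $|\alpha|$. Everything else is routine once this is in place, and the only analytic subtlety is remembering to use $\langle\eta\rangle_y\le|\lambda|^{1/m}+\langle\eta\rangle_y$ in the right direction so the positive powers are absorbed rather than the negative ones.
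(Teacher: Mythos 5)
Your proposal is correct and follows essentially the same route as the paper: differentiate the identity $(a-\lambda)^{-1}(a-\lambda)=1$ to express $\partial_\lambda^k\partial_\eta^\alpha\nabla_{i_1}\cdots\nabla_{i_q}(a-\lambda)^{-1}$ as a sum of products of powers of $(a-\lambda)^{-1}$ with derivatives of $a$, then bound the powers by parameter-ellipticity and the derivative factors by the $S^m_{\rho,\delta}(\nabla)$ estimates, absorbing each surplus $\langle\eta\rangle_y^{m}$ into one factor of $(|\lambda|^{1/m}+\langle\eta\rangle_y)^{-m}$. The only difference is presentational: the paper carries out the low-order cases explicitly and appeals to a double induction, whereas you package the same induction into an explicit combinatorial lemma on the structure of the general term.
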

\begin{proof}
First, let us take $m>0$. We choose $c_K$ to be as the constant coming from parameter-ellipticity. We start by verifying the identities for the horizontal derivatives, we will deduce them from the identity  
\begin{equation}\label{identityInverse}
    (a(y,\eta)-\lambda)^{-1}(a(y,\eta)-\lambda)=1. 
\end{equation}
Applying $\nabla_\ell$ to \eqref{identityInverse} results in 
    \begin{align}\label{q=1}
    \begin{split}
        \nabla_\ell\left[(a(y,\eta)-\lambda)^{-1}(a(y,\eta)-\lambda)\right] &= 0 \iff\\
       \left( \nabla_\ell (a(y,\eta)-\lambda)^{-1}\right) (a(y,\eta)-\lambda) + (a(y,\eta)-\lambda)^{-1} (\nabla_\ell a(y,\eta)) &= 0\iff\\
       \nabla_\ell (a(y,\eta)-\lambda)^{-1} = (a(y,\eta)-\lambda)^{-2}& (\nabla_\ell a(y,\eta)).
    \end{split}  
    \end{align}
    Thus using this expression for $\nabla_\ell (a(y,\eta)-\lambda)^{-1}$, parameter-ellipticity and the estimates for $a(y,\eta)$ we get 
    \begin{align*}
        |\nabla_\ell (a(y,\eta)-\lambda)^{-1}| &= |(a(y,\eta)-\lambda)^{-2} (\nabla_\ell a(y,\eta))|\\
        &\leq C_{K,\ell}(|\lambda|^{1/m} + \langle\eta\rangle_{y})^{-2m}\langle\eta\rangle_{y}^{m+\delta}\\
        &\leq C_{K,\ell} (|\lambda|^{1/m} + \langle\eta\rangle_{y})^{-m}\langle\eta\rangle_{y}^{\delta}. 
    \end{align*}
    We obtain the desired result on $x$-derivatives by performing an induction argument, so that for all $i_1, \ldots, i_q$ we have 
    \[
    |\nabla_{i_{1}} \ldots \nabla_{i_{q}} (a(y,\eta)-\lambda)^{-1}|\leq C_{K, i_1, \ldots, i_q} (|\lambda|^{1/m} + \langle\eta\rangle_{y})^{-m}\langle\eta\rangle_{y}^{\delta q}.
    \]
    Now let us consider the case of vertical derivatives. If $q=0$, i.e. there are no horizontal derivatives, we get from \eqref{identityInverse} again by Leibniz rule
    \[
    \partial_{\eta_l} (a(y,\eta)-\lambda)^{-1} = (a(y,\eta)-\lambda)^{-2} (\partial_{\eta_l} a(y,\eta)),
    \]
    and estimating as before  
    \begin{align*}
        |\partial_{\eta_l} (a(y,\eta)-\lambda)^{-1}| &= |(a(y,\eta)-\lambda)^{-2} (\partial_{\eta_l} a(y,\eta))|\\
        &\leq C_{K,l}(|\lambda|^{1/m} + \langle\eta\rangle_{y})^{-2m}\langle\eta\rangle_{y}^{m-\rho}\\
        &\leq C_{K,l} (|\lambda|^{1/m} + \langle\eta\rangle_{y})^{-m}\langle\eta\rangle_{y}^{-\rho}. 
    \end{align*}
    Hence, again by induction we obtain for all $\alpha$
    \[
    |\partial^\alpha_{\eta} (a(y,\eta)-\lambda)^{-1}|\leq C_{K, \alpha} (|\lambda|^{1/m} + \langle\eta\rangle_{y})^{-m}\langle\eta\rangle_{y}^{-\rho |\alpha|}.
    \]
    On the other hand if $q=1$, we differentiate the identity \eqref{q=1} 
    \begin{align*}
        \partial_{\eta_l} \nabla_\ell (a(y,\eta)-\lambda)^{-1} &= \partial_{\eta_l}\left( (a(y,\eta)-\lambda)^{-2} (\nabla_\ell a(y,\eta))\right)\\
        &= 2 (a(y,\eta)-\lambda)^{-1} (\partial_{\eta_l}a(y,\eta)-\lambda)^{-1})(\nabla_\ell a(y,\eta)) \\
        &+ (a(y,\eta)-\lambda)^{-2} (\partial_{\eta_l}\nabla_\ell a(y,\eta)),
    \end{align*}
    so from the previous estimations, parameter-ellipticity and estimates for $a(y,\eta)$ we obtain  
    \begin{align*}
        |\partial_{\eta_l} \nabla_\ell (a(y,\eta)-\lambda)^{-1}|&\leq C_{K,l,\ell} (|\lambda|^{1/m} + \langle\eta\rangle_{y})^{-m}(|\lambda|^{1/m} + \langle\eta\rangle_{y})^{-m}\langle\eta\rangle_{y}^{-\rho}\langle\eta\rangle_{y}^{m+\delta }\\
        &+(|\lambda|^{1/m}+ \langle\eta\rangle_{y})^{-2m}\langle\eta\rangle_{y}^{m+\delta-\rho}\\
        &\leq C_{K,l,\ell} (|\lambda|^{1/m}+\langle\eta\rangle_{y})^{-m}\langle\eta\rangle_{y}^{\delta-\rho}.
    \end{align*}
    Once more from induction it follows that for all $\alpha$
    \[
    |\partial_{\eta}^\alpha \nabla_\ell (a(y,\eta)-\lambda)^{-1}| \leq C_{K,\alpha,\ell}  (|\lambda|^{1/m} + \langle\eta\rangle_{y})^{-m}\langle\eta\rangle_{y}^{\delta-\rho|\alpha|}.
    \]
    At this moment it becomes clear that carrying out a double induction argument we will get for all $\alpha, i_1, \ldots, i_q$  
     \[
    |\partial_\eta^\alpha\nabla_{i_{1}} \ldots \nabla_{i_{q}} (a(y,\eta)-\lambda)^{-1}|\leq C_{K, \alpha, i_1, \ldots, i_q} (|\lambda|^{1/m} + \langle\eta\rangle_{y})^{-m}\langle\eta\rangle_{y}^{\delta q-\rho|\alpha|}.
    \]
    We conclude by studying the case of $\lambda$-derivatives. Notice that for for $|\alpha|=q=0$ 
    \begin{align*}
        \partial_\lambda\left[(a(y,\eta)-\lambda)^{-1}(a(y,\eta)-\lambda)\right] &= 0 \iff\\
       \left( \partial_\lambda (a(y,\eta)-\lambda)^{-1}\right) (a(y,\eta)-\lambda) + (a(y,\eta)-\lambda)^{-1} (\partial_\lambda (a(y,\eta)-\lambda)) &= 0\iff\\
       \partial_\lambda(a(y,\eta)-\lambda)^{-1} = (a(y,\eta)-\lambda)^{-2}&.
    \end{align*}
    Hence 
    \[
    |\partial_\lambda(a(y,\eta)-\lambda)^{-1}| = |(a(y,\eta)-\lambda)^{-2}|\leq C_K (|\lambda|^{1/m} + \langle\eta\rangle_{y})^{-2m}.
    \]
    We observe that using induction several times will end up implying that for all $k, \alpha, i_1, \ldots, i_q$ 
    \[
        |\partial_\lambda^k\partial_{\eta}^{\alpha} \nabla_{i_{1}} \ldots \nabla_{i_{q}}\left(a(y,\eta)-\lambda \right)^{-1}| \leq C_{K, k, \alpha, i_1, \ldots, i_q} (|\lambda|^{1/m} + \langle\eta\rangle_{y})^{-m(k+1)}\langle\eta\rangle_{y}^{\delta q-\rho|\alpha|}, 
        \]
        as we promised. Furthermore, since for a fixed $\lambda$ the inequality 
        \[
        (|\lambda|^{1/m} + \langle\eta\rangle_{y})^{-m}\langle\eta\rangle_{y}^{\delta q-\rho|\alpha|}\leq \langle\eta\rangle_{y}^{-m+\delta q-\rho|\alpha|}
        \]
        holds, we deduce that $(a-\lambda)^{-1}\in S_{\rho, \delta}^{-m}(\nabla)$ for large $\eta$. 
        Finally, we point out that if $m=0$ the proof is completely analogous. 
\end{proof}
Using these estimations we can construct a parameter-parametrix $A_\lambda^\sharp$ for the operators $a(X,D)-\lambda I$. 
\begin{thm}\label{realEstiParameSymbol}
    Let $0\leq \delta<\rho \leq 1$, $m\geq 0$ and let $\Lambda\subset \bbC$ be a sector. Suppose that at least one of the following conditions is fulfilled:
\begin{enumerate}
    \item $\rho>\frac{1}{2}$;
    \item the connection $\nabla$ is symmetric and $\rho>\frac{1}{3}$;
    \item the connection $\nabla$ is flat.
\end{enumerate}
If $a\in S_{\rho, \delta}^{m}(\nabla)$ is parameter-elliptic with respect to $\Lambda$, then $a(x,D)-\lambda I$ has a parametrix $A^\sharp_\lambda$ such that its symbol satisfies 
\[
   a^\sharp(y,\eta,\lambda):=a_{A^\sharp}(y,\eta,\lambda) \sim (a(y,\eta)-\lambda)^{-1} \sum_{k=0}^\infty b_{k}(y,\eta, \lambda) \, \text{ as } \langle\eta\rangle_{y} \to\infty,
   \]
   where for all $\lambda$ each $b_k\in S_{\rho,\delta}^{-kr}(\nabla)$ for some positive $r$. Moreover, 
   \begin{itemize}
       \item if $m>0$, the symbol  $a^\sharp(y,\eta,\lambda)$ satisfies that for any compact subset $K\subset M$ there exist a positive constant $c_K$ such that 
         \[
        |\partial_\lambda^k\partial_{\eta}^{\alpha} \nabla_{i_{1}} \ldots \nabla_{i_{q}} a^\sharp(y,\eta,\lambda)| \leq C_{K, k, \alpha, i_1, \ldots, i_q} (|\lambda|^{1/m} + \langle\eta\rangle_{y})^{-m(k+1)}\langle\eta\rangle_{y}^{\delta q-\rho|\alpha|}, 
        \]
       for all $k, \alpha, i_1, \ldots, i_q$, $\lambda\in\Lambda$ and $(y,\eta)\in T^*M \text{ with } y\in K, |\lambda|+\langle\eta\rangle_{y}\geq c_K$. In particular, for each fixed $\lambda\in\Lambda$ we have that $A^\sharp_\lambda\in \Psi_{\rho, \delta}^{-m}\left(\Omega^{\kappa}, \nabla\right)$;
       \item if $m=0$, the symbol  $a^\sharp(y,\eta,\lambda)$ satisfies that for any compact subset $K\subset M$ there exist a positive constant $c_K$ such that 
         \[
        |\partial_\lambda^k\partial_{\eta}^{\alpha} \nabla_{i_{1}} \ldots \nabla_{i_{q}} a^\sharp(y,\eta,\lambda)| \leq C_{K, k, \alpha, i_1, \ldots, i_q} (|\lambda| + 1)^{-(k+1)}\langle\eta\rangle_{y}^{\delta q-\rho|\alpha|}, 
        \]
       for all $k, \alpha, i_1, \ldots, i_q$, $\lambda\in\Lambda$ and $(y,\eta)\in T^*M \text{ with } y\in K, |\lambda|\geq c_K$. In particular, for each fixed $\lambda\in\Lambda$ we have that $A^\sharp_\lambda\in \Psi_{\rho, \delta}^{0}\left(\Omega^{\kappa}, \nabla\right)$.
   \end{itemize}
\end{thm}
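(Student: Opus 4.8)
The strategy is the classical iterative construction of a parametrix, adapted to the parameter-dependent setting of the Safarov calculus. I would first treat $\lambda$ as a frozen parameter and write the leading term of the would-be parametrix as $a_0^\sharp(y,\eta,\lambda):=(a(y,\eta)-\lambda)^{-1}$, which by Theorem \ref{ParaEllEst} lies in $S_{\rho,\delta}^{-m}(\nabla)$ (for $m>0$) or $S_{\rho,\delta}^{0}(\nabla)$ (for $m=0$) for large $\eta$, with the sharpened parameter-dependent bounds recorded there. I would then apply the composition formula of Theorem \ref{producto} to $A_0^\sharp\circ(a(X,D)-\lambda I)$, where $A_0^\sharp$ has symbol $a_0^\sharp$: the principal term is $a_0^\sharp\cdot(a-\lambda)=1$, and the remainder $\sigma_{A_0^\sharp(a(X,D)-\lambda I)}-1$ lies in $S_{\rho,\delta}^{-r}(\nabla)$ where $r$ is the gain from \eqref{rGenRhoMinusDelta} determined by which of Conditions (1)--(3) holds (this is where positivity of $r$, hence $\rho>1/2$, or symmetry with $\rho>1/3$, or flatness, is used). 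This sets up a geometric-series / Neumann-type iteration.

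\textbf{Key steps.} (i) Define recursively $b_0\equiv 1$ and, having constructed $b_0,\dots,b_{k-1}$, let $b_k$ be chosen so that the symbol of $\bigl(\sum_{j=0}^{k} a_0^\sharp b_j\bigr)(X,D)\circ(a(X,D)-\lambda I)$ equals $1$ modulo $S_{\rho,\delta}^{-(k+1)r}(\nabla)$; concretely $b_k$ is minus the order-$(-kr)$ part of the accumulated remainder, and one checks $b_k\in S_{\rho,\delta}^{-kr}(\nabla)$ by induction using the product formula \eqref{asyExp} and the mapping properties of $\partial_\eta^\alpha$ and $\nabla_x^\alpha$ on the symbol classes. (ii) Form the asymptotic sum $a^\sharp(y,\eta,\lambda)\sim (a(y,\eta)-\lambda)^{-1}\sum_{k\ge0}b_k(y,\eta,\lambda)$ via a Borel-type summation in the Safarov classes (available since the orders $-kr\to-\infty$), obtaining a symbol whose associated operator $A^\sharp_\lambda$ is a right parametrix; the left parametrix is constructed symmetrically, and the two agree modulo $\Psi^{-\infty}$ by the standard argument, so $A^\sharp_\lambda$ is a genuine parametrix of $a(X,D)-\lambda I$. (iii) Propagate the parameter-dependent estimates: each $b_k$ is built from $a_0^\sharp=(a-\lambda)^{-1}$ and horizontal/vertical derivatives of $a$, so repeated use of the Leibniz rule together with the bounds of Theorem \ref{ParaEllEst} (and the elementary fact that extra factors of $(|\lambda|^{1/m}+\langle\eta\rangle_y)^{-m}$ can be traded for $\langle\eta\rangle_y^{-m}$) yields, for each $k$, a bound of the claimed form $(|\lambda|^{1/m}+\langle\eta\rangle_y)^{-m(k_\lambda+1)}\langle\eta\rangle_y^{\delta q-\rho|\alpha|}$ on $\partial_\lambda^{k_\lambda}\partial_\eta^\alpha\nabla_{i_1}\cdots\nabla_{i_q}(a_0^\sharp b_k)$, and then on the Borel sum; the $m=0$ case is identical with $(|\lambda|+1)^{-(k_\lambda+1)}$ in place of the mixed weight. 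Specializing $\lambda$ then gives $A^\sharp_\lambda\in\Psi_{\rho,\delta}^{-m}(\Omega^\kappa,\nabla)$ resp.\ $\Psi_{\rho,\delta}^{0}(\Omega^\kappa,\nabla)$.

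\textbf{Main obstacle.} The routine part is the order bookkeeping in the iteration; the genuinely delicate point is making the parameter-dependent estimates uniform through the Borel summation. One must verify that the standard Borel construction (choosing cutoffs $\chi(\langle\eta\rangle_y/R_k)$ with $R_k\to\infty$ fast) can be carried out so that the resulting symbol still satisfies the \emph{parameter-uniform} bounds — i.e.\ the constants $C_{K,k_\lambda,\alpha,i_1,\dots,i_q}$ do not secretly depend on $\lambda$, and the weight $(|\lambda|^{1/m}+\langle\eta\rangle_y)^{-m(k_\lambda+1)}$ is preserved rather than merely $\langle\eta\rangle_y^{-m(k_\lambda+1)}$. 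This requires checking that the cutoffs, which depend only on $\langle\eta\rangle_y$, interact harmlessly with $\partial_\lambda$ and that the tail of the series is controlled by the first omitted term uniformly in $\lambda\in\Lambda$; the key enabling inequality is that on the support of $1-\chi(\langle\eta\rangle_y/R_k)$ one has $\langle\eta\rangle_y\gtrsim R_k$, so that each discarded term $a_0^\sharp b_k$ carries an extra genuine $\langle\eta\rangle_y^{-r}$ decay which, combined with $(|\lambda|^{1/m}+\langle\eta\rangle_y)^{-m}\le\langle\eta\rangle_y^{-m}$ only where needed, suffices. A secondary subtlety is that Theorems \ref{ParaEllEst} and \ref{producto} are stated for frozen $\lambda$, so one should first record a parameter-dependent version of the composition formula \eqref{asyExp} — but since the $P_{\beta,\gamma}^{(\kappa)}$ are $\lambda$-independent and $(a-\lambda I)$ enters only through its symbol, this is a direct consequence of differentiating \eqref{asyExp} in $\lambda$ and applying the bounds already in hand.
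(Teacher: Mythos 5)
Your proposal is correct and follows essentially the same route as the paper: the leading term $(a-\lambda)^{-1}$ (which the paper makes into a genuine symbol $b_0=\chi\cdot(a-\lambda)^{-1}$ via an explicit cutoff in $\langle\lambda\rangle+\langle\eta\rangle_y$), the composition remainder $\mathfrak{R}\in\Psi_{\rho,\delta}^{-r}$ with $r$ from \eqref{rGenRhoMinusDelta}, a Neumann-series iteration (the paper takes $b_k=(-1)^k\mathfrak{r}^k$ explicitly, which is exactly what your recursion produces), asymptotic summation, the left/right parametrix agreement, and propagation of the parameter-uniform bounds by Leibniz and Theorem \ref{ParaEllEst}. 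Your flagged obstacle about carrying the $\lambda$-uniform weights through the asymptotic summation is real and is in fact treated rather tersely in the paper, so your attention to it is, if anything, more careful than the original.
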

   \begin{proof}
   Let us deal with the case $m>0$, when $m$ is equal to zero the proof is analogous. For the construction of the parametrix we follow the proof of Theorem \ref{Parametrix}. Notice that in Theorem \ref{ParaEllEst} we attained that $(a-\lambda)^{-1}\in S_{\rho, \delta}^{-m}(\nabla)$ for large $\eta$, which morally prevent $(a-\lambda)^{-1}$ of being a true symbol; in order to correct this we multiply it with a nice function. Let $c_K$ be the constant from parameter-ellipticity and let $\chi$ be a smooth function given by
   \[
   \chi(t) = \begin{cases}
       0, &|t|\leq c_K,\\
       1, &|t|> c_K.
   \end{cases}
   \]
   Thus we set $\chi(y,\eta, \lambda) = \chi (\langle\lambda\rangle + \langle\eta\rangle_{y})$, where $\langle\lambda\rangle= \sqrt{1+|\lambda|^2}$, and we notice that $\chi$ vanishes in a neighborhood of the zeros of $a-\lambda$ and it is identically $1$ for large $\eta$. Let us define 
       \[
       b_0(y,\eta,\lambda) = \chi(y,\eta, \lambda) (a(y,\eta)-\lambda)^{-1}.
       \]
       From Theorem \ref{ParaEllEst} it follows that for each $\lambda\in\Lambda$, we have $b_0\in S_{\rho,\delta}^{-m}(\nabla)$.  We denote by $B_0$ the pseudo-differential operator associated to $b_0$. By Theorem \ref{producto} we have that the symbol of the composition operator $B_0 (a(X,D)-\lambda I)$ satisfies 
       \begin{align*}
       \sigma_{b_0 a}(y,\eta,\lambda) &\sim 1 + \mathfrak{r}(y,\eta,\lambda)\\
           &\sim 1 + \sum_{|\alpha|+|\beta|+|\gamma|\neq 0} \frac{1}{\alpha !} \frac{1}{\beta !} \frac{1}{\gamma !} P_{\beta, \gamma}^{(\kappa)}(y, \eta) D_{\eta}^{\alpha+\beta} b_0(y,\eta,\lambda) D_{\eta}^{\gamma} \nabla_{y}^{\alpha} (a(y,\eta)-\lambda),
       \end{align*}
       and we immediately recognize that $\mathfrak{r}$ satisfies the estimates 
       \begin{equation*}
       |\mathfrak{r}(y,\eta,\lambda)| \leq C_{K} (|\lambda|^{1/m} + \langle\eta\rangle_{y})^{-m}\langle\eta\rangle_{y}^{m-r},
       \end{equation*}
       where $r$ is as in \eqref{rGenRhoMinusDelta}. 
       Moreover, using induction and Theorem \ref{ParaEllEst} we further obtain that 
       \begin{equation}\label{estimadoError}
         |\partial_{\eta}^{\alpha} \nabla_{i_{1}} \ldots \nabla_{i_{q}}\mathfrak{r}(y,\eta,\lambda)| \leq C_{K, k, \alpha, i_1, \ldots, i_q} (|\lambda|^{1/m} + \langle\eta\rangle_{y})^{-m}\langle\eta\rangle_{y}^{m-\rho|\alpha|+\delta q-r},  
       \end{equation}
       so that for all fixed $\lambda\in \Lambda$ we have that $\mathfrak{r}\in S_{\rho, \delta}^{-r}(\nabla)$. Thus $B_0 (a(X,D)-\lambda I) = I + \mathfrak{R}$ with $\mathfrak{R} \in \Psi_{\rho, \delta}^{-r}\left(\Omega^{\kappa}, \nabla\right)$. Now let $E\in \Psi_{\rho, \delta}^{0}\left(\Omega^{\kappa}, \nabla\right)$ be the properly-supported pseudo-differential operator such that 
       \[
       E \sim  \sum_{j=0}^\infty (-1)^{j}\,\mathfrak{R}^j, 
       \]
       i.e. 
       \[
       \sigma_E \sim \sum_{j=0}^\infty (-1)^{j}\,\mathfrak{r}^j,
       \]
       then, from it construction, we have that 
       \[
       (EB_0)(a(X,D)-\lambda) = I \text{ mod } \Psi^{-\infty}.
       \]
       We conclude that $A_{\lambda,1}^\sharp:= EB_0$ is a left parametrix of $a(X,D)-\lambda I$. Analogously we can construct a right parametrix $A_{\lambda,2}^\sharp$ such that $A_{\lambda,1}^\sharp = A_{\lambda,2}^\sharp$ $(\text{mod }\Psi^{-\infty})$. Therefore, we obtain that $A_{\lambda}^\sharp:= A_{\lambda,1}^\sharp$ is a parametrix, and by construction its symbol satisfies 
       \begin{equation}\label{asympPara}
              a^\sharp(y,\eta,\lambda):=a_{A^\sharp}(y,\eta,\lambda) \sim (a(y,\eta)-\lambda)^{-1} \sum_{k=0}^\infty b_{k}(y,\eta, \lambda) \, \text{ as } \langle\eta\rangle_{y} \to\infty,
       \end{equation}
   where $b_k=(-1)^k\mathfrak{r}^k$ and for all $\lambda$ each $b_k\in S_{\rho,\delta}^{-kr}(\nabla)$. Finally, we obtain the corresponding estimates for $a^\sharp$ utilizing \eqref{asympPara}, Theorem \ref{ParaEllEst} and the Leibniz rule. Once we have such estimates in our hands, we can assure that $a^{\sharp}$ is a true symbol in the class $S_{\rho, \delta}^{-m}(\nabla)$, so that for a fixed $\lambda$ the parametrix $A_\lambda^\sharp$ is a pseudo-differential operator in the class $\Psi_{\rho, \delta}^{-m}\left(\Omega^{\kappa}, \nabla\right)$, finishing the proof. 
   \end{proof}
   \begin{rem}\label{remExpAsyPara}
        Let us be more precise on the asymptotic expansion \eqref{asympPara}. We come back to the definition of $\mathfrak{r}$ and expand the terms using the Leibniz rule
        \begin{align*}
            \mathfrak{r}(y,\eta,\lambda) =& \sum_{|\alpha|+|\beta|+|\gamma|\neq 0} \frac{1}{\alpha !} \frac{1}{\beta !} \frac{1}{\gamma !} P_{\beta, \gamma}^{(\kappa)}(y, \eta) D_{\eta}^{\alpha+\beta} b_0(y,\eta,\lambda) D_{\eta}^{\gamma} \nabla_{y}^{\alpha} (a(y,\eta)-\lambda)\\
            =& \sum_{|\alpha|+|\beta|+|\gamma|\neq 0} \frac{1}{\alpha !} \frac{1}{\beta !} \frac{1}{\gamma !} P_{\beta, \gamma}^{(\kappa)}(y, \eta) D_{\eta}^{\alpha+\beta} [\chi(y,\eta,\lambda)(a(y,\eta)-\lambda)^{-1}] D_{\eta}^{\gamma} \nabla_{y}^{\alpha} a(y,\eta)\\
            =& \sum_{|\alpha|+|\beta|+|\gamma|\neq 0} \sum_{\iota\leq \alpha+\beta} \frac{1}{\alpha !} \frac{1}{\beta !} \frac{1}{\gamma !}\frac{(\alpha+\beta)!}{\iota!(\alpha+\beta-\iota)!} P_{\beta, \gamma}^{(\kappa)}(y, \eta)  D_{\eta}^{\alpha+\beta-\iota}\chi(y,\eta,\lambda)\\
            & D_{\eta}^{\iota}(a(y,\eta)-\lambda)^{-1} D_{\eta}^{\gamma} \nabla_{y}^{\alpha} a(y,\eta)\\
            =& \sum_{|\alpha|+|\beta|+|\gamma|\neq 0} \sum_{\iota\leq \alpha+\beta} \frac{1}{\alpha !} \frac{1}{\beta !} \frac{1}{\gamma !}\frac{(\alpha+\beta)!}{\iota!(\alpha+\beta-\iota)!}(-1)^\iota \iota! P_{\beta, \gamma}^{(\kappa)}(y, \eta)  D_{\eta}^{\alpha+\beta-\iota}\chi(y,\eta,\lambda)\\
            &(a(y,\eta)-\lambda)^{-1-|\iota|}D_{\eta}^{\iota}a(y,\eta) D_{\eta}^{\gamma} \nabla_{y}^{\alpha} a(y,\eta)\\
            =& \sum_{|\alpha|+|\beta|+|\gamma|\neq 0} \sum_{\iota\leq \alpha+\beta} \mathfrak{r}_{1,\alpha,\beta,\gamma,\iota}(y,\eta,\lambda)(a(y,\eta)-\lambda)^{-1-|\iota|}.
        \end{align*}
        We have defined the function $\mathfrak{r}_{1,\alpha,\beta,\gamma,\iota}$ as a polynomial in the variables $P_{\beta, \gamma}^{(\kappa)}$, $\chi$, $\partial_{\eta_l}\chi$, $\partial_{\eta_l}a$ and $\nabla_l a$. Importantly, it contains no terms involving $(a(y,\eta)-\lambda)^{-1}$, which is significant when deriving asymptotic expansions for the symbols of functions of pseudo-differential operators. Because of this, we aim to derive analogous expressions for higher powers of $\mathfrak{r}$. In particular, we now consider the case of $\mathfrak{r}^2$. Applying the product formula for symbols and the Leibniz rule yields the following:
        \begin{align*}
           \mathfrak{r}^2(y,\eta,\lambda)=& \sum_{\alpha_2,\beta_2,\gamma_2} \frac{1}{\alpha_2 !} \frac{1}{\beta_2 !} \frac{1}{\gamma_2 !} P_{\beta_2, \gamma_2}^{(\kappa)}(y, \eta) D_{\eta}^{\alpha_2+\beta_2} \sum_{|\alpha|+|\beta|+|\gamma|\neq 0} \sum_{\iota\leq \alpha+\beta} \mathfrak{r}_{1,\alpha,\beta,\gamma,\iota}(y,\eta,\lambda)\\
           &(a(y,\eta)-\lambda)^{-1-|\iota|} D_{\eta}^{\gamma_2} \nabla_{y}^{\alpha_2}\sum_{|\alpha'|+|\beta'|+|\gamma'|\neq 0} \sum_{\iota'\leq \alpha'+\beta'} \mathfrak{r}_{1,\alpha',\beta',\gamma',\iota'}(y,\eta,\lambda)\\
           &(a(y,\eta)-\lambda)^{-1-|\iota'|}\\
           =& \sum_{\alpha_2,\beta_2,\gamma_2}\sum_{|\alpha|+|\beta|+|\gamma|\neq 0} \sum_{\iota\leq \alpha+\beta}\sum_{|\alpha'|+|\beta'|+|\gamma'|\neq 0} \sum_{\iota'\leq \alpha'+\beta'} \frac{1}{\alpha_2 !} \frac{1}{\beta_2 !} \frac{1}{\gamma_2 !} P_{\beta_2, \gamma_2}^{(\kappa)}(y, \eta)\\
           &\sum_{\iota_2\leq \alpha_2+\beta_2} \frac{(\alpha_2+\beta_2)!}{\iota_2!(\alpha_2+\beta_2-\iota_2)!}\frac{(-1)^{|\iota_2|}(\iota+\iota_2)!}{\iota!} D_{\eta}^{\alpha_2+\beta_2-\iota_2}  \mathfrak{r}_{1,\alpha,\beta,\gamma,\iota}(y,\eta,\lambda)\\
           &(a(y,\eta)-\lambda)^{-1-|\iota|-|\iota_2|}D_{\eta}^{\iota_2}a(y,\eta) D_{\eta}^{\gamma_2}  \sum_{\mu_2\leq \alpha_2} \frac{\alpha_2!}{\mu_2!(\alpha_2-\mu_2)!}\frac{(-1)^{|\mu_2|}(\iota'+\mu_2)!}{\iota'!}\\
           &\nabla_{y}^{\alpha_2-\mu_2}\mathfrak{r}_{1,\alpha',\beta',\gamma',\iota'}(y,\eta,\lambda)(a(y,\eta)-\lambda)^{-1-|\iota'|-|\mu_2|}\nabla_{y}^{\mu_2}a(y,\eta)\\
           =& \sum_{\alpha_2,\beta_2,\gamma_2}\sum_{|\alpha|+|\beta|+|\gamma|\neq 0} \sum_{\iota\leq \alpha+\beta}\sum_{|\alpha'|+|\beta'|+|\gamma'|\neq 0} \sum_{\iota'\leq \alpha'+\beta'} \frac{1}{\alpha_2 !} \frac{1}{\beta_2 !} \frac{1}{\gamma_2 !} P_{\beta_2, \gamma_2}^{(\kappa)}(y, \eta)\\
           &\sum_{\iota_2\leq \alpha_2+\beta_2} \frac{(\alpha_2+\beta_2)!}{\iota_2!(\alpha_2+\beta_2-\iota_2)!}\frac{(-1)^{|\iota_2|}(\iota+\iota_2)!}{\iota!} D_{\eta}^{\alpha_2+\beta_2-\iota_2}  \mathfrak{r}_{1,\alpha,\beta,\gamma,\iota}(y,\eta,\lambda)\\
           &(a(y,\eta)-\lambda)^{-1-|\iota|-|\iota_2|}D_{\eta}^{\iota_2}a(y,\eta)   \sum_{\mu_2\leq \alpha_2} \frac{\alpha_2!}{\mu_2!(\alpha_2-\mu_2)!}\frac{(-1)^{|\mu_2|}(\iota'+\mu_2)!}{\iota'!}\\
           &\sum_{\nu_2\leq \gamma_2} \frac{\gamma_2!}{\nu_2!(\gamma_2-\nu_2)!}\frac{(-1)^{|\nu_2|}(\iota'+\mu_2+\nu_2)!}{(\iota'+\mu_2)!}D_{\eta}^{\gamma_2-\nu_2}[\nabla_{y}^{\alpha_2-\mu_2}\mathfrak{r}_{1,\alpha',\beta',\gamma',\iota'}(y,\eta,\lambda)\\
           &\nabla_{y}^{\mu_2}a(y,\eta)](a(y,\eta)-\lambda)^{-1-|\iota'|-|\mu_2|-|\nu_2|}D_{\eta}^{\nu_2}a(y,\eta)\\
           =& \sum_{\alpha_2,\beta_2,\gamma_2}\sum_{|\alpha|+|\beta|+|\gamma|\neq 0} \sum_{\iota\leq \alpha+\beta}\sum_{|\alpha'|+|\beta'|+|\gamma'|\neq 0} \sum_{\iota'\leq \alpha'+\beta'}\sum_{\iota_2\leq \alpha_2+\beta_2}\sum_{\mu_2\leq \alpha_2}\sum_{\nu_2\leq \gamma_2} \\
           &\mathfrak{r}_{2,\alpha,\beta,\gamma,\iota, \alpha',\beta',\gamma',\iota',\alpha_2,\beta_2,\gamma_2,\iota_2,\mu_2,\nu_2}(y,\eta,\lambda)(a(y,\eta)-\lambda)^{-2-|\iota|-|\iota'|-|\iota_2|-|\mu_2|-|\nu_2|}.
        \end{align*}
        In the end, we obtained the desired form using the function $\mathfrak{r}_{2,\cdot}$, which is more intricate than $\mathfrak{r}_{1,\cdot}$ and, in fact, depends on it. Additionally, it involves two new indices arising from the application of two extra Leibniz rules.
        For the general case we notice that on each step new coefficients $\alpha_i, \beta_i,\gamma_i,\iota_i,\mu_i,\nu_i$ will appear and no more than those, i.e. we will always have 3 indices from the product formula and 3 more new indices coming from the Leibniz formula. Therefore, we introduce a short notation just to make the formuli more compact. Let $\mathfrak{I}_1$ be the set of the first indices, that is $\mathfrak{I}_1=\{\alpha,\beta,\gamma,\iota\}$, then the next set of indices is defined as $\mathfrak{I}_2=\{\mathfrak{I}_1, \mathfrak{I}_1', \alpha_2,\beta_2,\gamma_2,\iota_2,\mu_2,\nu_2\}$, and recursively we have that the $k$-th set of indices is defined by 
        \[
        \mathfrak{I}_k = \{\mathfrak{I}_{k-1}, \mathfrak{I}_{k-1}', \alpha_k,\beta_k,\gamma_k,\iota_k,\mu_k,\nu_k\}. 
        \]
        Moreover, we recognize that the multi indices appearing as a power on $(a(y,\eta)-\lambda)^{-1}$ are always the ones coming from the Leibniz rule, so we denote that subset as $\mathfrak{I}_{k,L}\subset \mathfrak{I}_k$.
        
        All in all,  with this new notation the powers $\mathfrak{r}^k$, for integers $k\geq1$, can be written as 
        \[
        \mathfrak{r}^k(y,\eta,\lambda) = \sum_{\mathfrak{I}_k} \mathfrak{r}_{\mathfrak{I}_k}(y,\eta,\lambda)(a(y,\eta)-\lambda)^{-k-|\mathfrak{I}_{k,L}|},
        \] 
        where $\mathfrak{r}_{\mathfrak{I}_k}$ is defined recursively and it does not contain any term involving $(a(y,\eta)-\lambda)^{-1}$. Therefore, we can write the expansion \eqref{asympPara} as follows:
        \begin{align*}
            \begin{split}
                a^\sharp(y,\eta,\lambda) &\sim (a(y,\eta)-\lambda)^{-1} \sum_{k=0}^\infty b_{k}(y,\eta, \lambda)\\
                &\sim (a(y,\eta)-\lambda)^{-1} \sum_{k=0}^\infty \sum_{\mathfrak{I}_k}(-1)^k  \mathfrak{r}_{\mathfrak{I}_k}(y,\eta,\lambda)(a(y,\eta)-\lambda)^{-k-|\mathfrak{I}_{k,L}|}\\
                &\sim \sum_{k=0}^\infty \sum_{\mathfrak{I}_k}(-1)^k  \mathfrak{r}_{\mathfrak{I}_k}(y,\eta,\lambda)(a(y,\eta)-\lambda)^{-1-k-|\mathfrak{I}_{k,L}|}\\
                &\sim  (a(y,\eta)-\lambda)^{-1} + \sum_{k=1}^\infty \sum_{\mathfrak{I}_k}(-1)^k  \mathfrak{r}_{\mathfrak{I}_k}(y,\eta,\lambda)(a(y,\eta)-\lambda)^{-1-k-|\mathfrak{I}_{k,L}|}
            \end{split}
        \end{align*}
        as $\langle\eta\rangle_{y} \to\infty$. 
       
       Finally, we observe that the dependence of $\mathfrak{r}_{\mathfrak{I}_k}$ on the variable $\lambda$ comes only through the function $\chi$ and its derivatives. However, since $\chi$ is identically 1 for sufficiently large values of $\langle\eta\rangle_{y}$, this dependence vanishes in the asymptotic expansion. Specifically, we get the following asymptotic expansion for the symbol of the parameter-parametrix
        \begin{equation}\label{asymPara2}
             a^\sharp(y,\eta,\lambda) \sim (a(y,\eta)-\lambda)^{-1} + \sum_{k=1}^\infty \sum_{\mathfrak{I}_k}(-1)^k  \mathfrak{r}_{\mathfrak{I}_k}(y,\eta)(a(y,\eta)-\lambda)^{-1-k-|\mathfrak{I}_{k,L}|}
        \end{equation}
        as $\langle\eta\rangle_{y} \to\infty$.
   \end{rem}
   \begin{rem}\label{extensionToSmallBall}
       In the next section, we will consider operators with discrete spectrum such that zero does not belong to the spectrum. Consequently, there exists a small $\epsilon>0$ for which the ball of radius $\epsilon$ around zero does not intersect the spectrum. Therefore, using parameter-ellipticity of $a\in S_{\rho, \delta}^{m}(\nabla)$ we can extend the previous results to hold in the extended region $\Lambda_\epsilon = \Lambda\cup \{\lambda\in\bbC: |\lambda|<\epsilon\}$. 
   \end{rem}
   \subsection{Norm estimates of resolvent operators}
   
   Here we take $(M,g)$ to be a closed Riemannian manifold with a connection $\nabla$ (not neccesarily metric). 
   
   We recall that our objective is to obtain some control of the resolvent of the type \eqref{ineWeWant}, thus now we will focus on computing some operator norms to achieve that. Specifically, the space $X$ will be the Hilbert space $L^2(M)$ (not just $L_{loc}^2$ because now $M$ is compact) or more generally some Sobolev space $H^s(M)$. Keeping this in mind we now introduce an auxiliary operator $\mathfrak{B}_{\lambda}^k$ that will play the role of Bessel potentials (see Subsection \ref{besselpotentials}) in this parameter operators framework and will help us to produce the estimates we require. 
   \begin{defn}\label{auxOperator}
       Let $k\in\bbR$ and let $\Lambda\subset \bbC$ be a sector. For coordinates $\{(y^k,\eta_k)\}$ of $T^*M$ let us consider the function $\mathfrak{b}_k(y,\eta,\lambda)= (|\lambda|^{1/|k|} + \langle\eta\rangle_{y})^{k}$, which for fixed $\lambda$ belongs to $S_{\rho,\delta}^{k}(\nabla)$, and we define the parameter pseudo-differential operator $\mathfrak{B}_{\lambda}^k$ given by the kernel 
       \[
       \mathscr{A}(x,y,\lambda) = \frac{1}{(2\pi)^n}p_{\kappa, \tau}\int_{T^*_{z_\tau}M} e^{i\varphi_\tau(x,\zeta,y)} \mathfrak{b}_k(z_\tau,\eta,\lambda)\, d\zeta. 
       \]
   \end{defn}
   \begin{rem}
       Notice that these operators are not the exact analogs of what an parameter-Bessel potential would be because in Subsection \ref{besselpotentials} we saw that the symbol of the Laplacian involves the scalar curvature. 
   \end{rem}
   \begin{lem}\label{lemmaBesselLamb}
   Let $s,l,k\in\bbR$ be such that $l\geq k$. Let $\mathfrak{B}_{\lambda}^k$ be the pseudo-differential operator given in Definition \ref{auxOperator}. Then 
   \begin{align*}
       \|\mathfrak{B}_{\lambda}^k\|_{H^{s}\to H^{s-l}} \leq C_{k,l} \, (1+|\lambda|^{1/|k|})^{k}, \text{ if } l\geq 0,\\
       \|\mathfrak{B}_{\lambda}^k\|_{H^{s}\to H^{s-l}} \leq C_{k,l} \,(1+|\lambda|^{1/|k|})^{-(l-k)},  \text{ if } l\leq 0.
   \end{align*}
   \end{lem}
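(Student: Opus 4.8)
The plan is to reduce the estimate to the uniform-in-$\lambda$ Sobolev boundedness of a suitable family of operators of order $l$. Set
\[
N(\lambda)=\begin{cases}(1+|\lambda|^{1/|k|})^{k}, & l\ge 0,\\ (1+|\lambda|^{1/|k|})^{-(l-k)}, & l\le 0,\end{cases}
\]
and note that $k=0$ forces $l=0$ and makes the statement trivial, so assume $k\neq 0$ and abbreviate $c=|\lambda|^{1/|k|}\ge 0$. Since the Safarov quantization is linear in the symbol, Definition \ref{auxOperator} lets us factor $\mathfrak{B}_\lambda^{k}=N(\lambda)\,Q_\lambda$, where $Q_\lambda$ is the operator with symbol $q_\lambda(y,\eta):=\mathfrak{b}_k(y,\eta,\lambda)/N(\lambda)$. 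The core claim is that $\{q_\lambda\}_{\lambda}$ is a bounded subset of $S_{\rho,\delta}^{l}(\nabla)$. Granting this, $\{Q_\lambda\}_\lambda$ is a bounded family in $\Psi_{\rho,\delta}^{l}(\Omega^{\kappa},\nabla)$, and by Theorem \ref{L2bound} (with the remark that on the compact $M$ an operator of order $l$ maps $H^{s}$ to $H^{s-l}$, its operator norm being controlled by finitely many of the symbol seminorms) we get $\|Q_\lambda\|_{H^{s}\to H^{s-l}}\le C_{k,l}$ uniformly in $\lambda$; alternatively, conjugate by the Bessel potentials $\mathcal{B}^{s-l}$ and $\mathcal{B}^{-s}$ of Subsection \ref{besselpotentials}, use Theorem \ref{producto} to see that $\mathcal{B}^{s-l}Q_\lambda\mathcal{B}^{-s}$ has symbol in a bounded subset of $S_{\rho,\delta}^{0}(\nabla)$, and invoke the order-zero $L^{2}$-estimate. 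Multiplying through by $N(\lambda)$ then yields both inequalities, so only the core claim is left.

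The core claim is first a pointwise bound on $q_\lambda$. Since $\langle\eta\rangle_{y}\ge 1$ we have $1\le \tfrac{c+\langle\eta\rangle_{y}}{1+c}\le \langle\eta\rangle_{y}$. If $l\ge 0$, then $q_\lambda=\bigl(\tfrac{c+\langle\eta\rangle_{y}}{1+c}\bigr)^{k}$, so $|q_\lambda|\le \langle\eta\rangle_{y}^{\max(k,0)}\le \langle\eta\rangle_{y}^{l}$, using $k\le l$ and $l\ge 0$. If $l\le 0$, hence $k\le l\le 0$, then $q_\lambda=\bigl(\tfrac{c+\langle\eta\rangle_{y}}{1+c}\bigr)^{k-l}(c+\langle\eta\rangle_{y})^{l}$, where the first factor is $\le 1$ (exponent $\le 0$, base $\ge 1$) and $(c+\langle\eta\rangle_{y})^{l}\le \langle\eta\rangle_{y}^{l}$ (exponent $\le 0$ and $c+\langle\eta\rangle_{y}\ge\langle\eta\rangle_{y}$); again $|q_\lambda|\le \langle\eta\rangle_{y}^{l}$, with constant $1$.

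For the derivatives I would apply the Fa\`a di Bruno formula to $t\mapsto t^{k}$ with $t=c+\langle\eta\rangle_{y}$, using the elementary estimates $|\partial_\eta^{\alpha}\langle\eta\rangle_{y}|\le C_{\alpha}\langle\eta\rangle_{y}^{1-|\alpha|}$ and $|\nabla_{i_{1}}\cdots\nabla_{i_{q}}\langle\eta\rangle_{y}|\le C_{q}\langle\eta\rangle_{y}$ (equivalently $\langle\eta\rangle_{y}\in S^{1}_{1,0}(\nabla)$: these hold because $w$ is smooth and homogeneous of degree $1$ in $\eta$, the horizontal derivatives being handled via the parallel-transport description of $\nabla_{x}^{\alpha}$ recalled after Definition \ref{defSym}). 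Each differentiation produces a factor $(c+\langle\eta\rangle_{y})^{-1}$ together with a derivative of $\langle\eta\rangle_{y}$; since a derivative of $\langle\eta\rangle_{y}$ costs at most one power of $\langle\eta\rangle_{y}$ and $\langle\eta\rangle_{y}\le c+\langle\eta\rangle_{y}$, every resulting term is dominated by $(c+\langle\eta\rangle_{y})^{k}\langle\eta\rangle_{y}^{-|\alpha|}$, whence
\[
\bigl|\partial_\eta^{\alpha}\nabla_{i_{1}}\cdots\nabla_{i_{q}}(c+\langle\eta\rangle_{y})^{k}\bigr|\le C_{\alpha,q}\,(c+\langle\eta\rangle_{y})^{k}\,\langle\eta\rangle_{y}^{\delta q-|\alpha|},
\]
with $C_{\alpha,q}$ independent of $c\ge 0$. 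Dividing by $N(\lambda)$ and using the pointwise bound $(c+\langle\eta\rangle_{y})^{k}/N(\lambda)=|q_\lambda|\le\langle\eta\rangle_{y}^{l}$ from the previous paragraph gives $|\partial_\eta^{\alpha}\nabla_{i_{1}}\cdots\nabla_{i_{q}}q_\lambda|\le C_{\alpha,q}\langle\eta\rangle_{y}^{l+\delta q-|\alpha|}\le C_{\alpha,q}\langle\eta\rangle_{y}^{l+\delta q-\rho|\alpha|}$ (using $\rho\le 1$, $\delta\ge 0$), that is, $\{q_\lambda\}_{\lambda}$ is a bounded subset of $S_{\rho,\delta}^{l}(\nabla)$, proving the core claim. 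The only genuinely delicate point is the uniformity in $\lambda$ of the operator norm — passing from a bounded family of symbols to a uniformly bounded family of operators $H^{s}\to H^{s-l}$; the rest is the elementary bookkeeping in the two displays above.
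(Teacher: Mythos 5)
Your argument is correct, and at bottom it rests on the same elementary inequality as the paper's proof, namely $\sup_{t\ge 1}(c+t)^{k}t^{-l}\le C_{k,l}N(\lambda)$ with $c=|\lambda|^{1/|k|}$; but the two proofs package it differently. The paper conjugates by Bessel potentials, $\|\mathfrak{B}_{\lambda}^{k}\|_{H^{s}\to H^{s-l}}=\|\mathfrak{B}_{\lambda}^{k}\mathcal{B}^{-l}\|_{L^{2}\to L^{2}}$, and then bounds this $L^{2}$ operator norm by the supremum of the symbol $(c+\langle\eta\rangle_{y})^{k}\langle\eta\rangle_{y}^{-l}$ ``using local Fourier transform'' --- a two-line computation that never touches derivatives of the symbol. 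You instead normalize the symbol by $N(\lambda)$, verify (pointwise bound plus Fa\`a di Bruno bookkeeping) that $\{q_{\lambda}\}$ is bounded in $S^{l}_{\rho,\delta}(\nabla)$, and invoke uniform Sobolev boundedness for a bounded family of symbols. Your version is longer but arguably more honest: the $L^{2}\to L^{2}$ norm of a pseudo-differential operator is not literally dominated by the sup of its symbol, and the uniformity in $\lambda$ really does require control of finitely many seminorms, which is exactly the ``delicate point'' you flag (and which the paper itself implicitly relies on elsewhere, e.g.\ in the proof of Theorem \ref{normParametrix}). Two small remarks: your aside that ``$k=0$ forces $l=0$'' is not right ($l\ge k=0$ permits $l>0$), though the case $k=0$ is degenerate in the statement anyway since $|\lambda|^{1/|k|}$ is undefined there; and your alternative route via conjugation with $\mathcal{B}^{s-l}$, $\mathcal{B}^{-s}$ is precisely the paper's reduction, so the two proofs converge if one takes that branch.
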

   \begin{proof}
      Let $\mathcal{B}^s$ be the Bessel potential defined in Subsection \ref{besselpotentials}. Using the Bessel potential we can compute the norm of $\mathfrak{B}_{\lambda}^k$ as follows
\[
\|\mathfrak{B}_{\lambda}^k\|_{H^{s}\to H^{s-l}} = \| \mathcal{B}^{s-l}\mathfrak{B}_{\lambda}^k \mathcal{B}^{-s}\|_{L^2\to L^2}= \|\mathfrak{B}_{\lambda}^k\mathcal{B}^{-l}\|_{L^2\to L^2}. 
\]
The latter norm can be computed directly using local Fourier transform, yielding 
\[
\|\mathfrak{B}_{\lambda}^k\mathcal{B}^{-l}\|_{L^2\to L^2}\leq \sup_{\eta\in\bbR^n} (|\lambda|^{1/|k|} + \langle\eta\rangle_{y})^{k} \langle\eta\rangle_{y}^{-l}\leq \begin{cases}
    C_{k,l} (1+|\lambda|^{1/|k|})^{k}, &\text{ if } l\geq 0\\
    C_{k,l} \,(1+|\lambda|^{1/|k|})^{-(l-k)},  &\text{ if } l\leq 0,
\end{cases}
\]
completing the proof. 
\end{proof}
We use this lemma to first estimate the norm of the parametrix $A_\lambda^\sharp$. 
\begin{thm}\label{normParametrix}
    Let $0\leq \delta<\rho \leq 1$, $m\geq 0$, $l\geq -m$,
    $s\in\bbR$, and let $\Lambda\subset \bbC$ be a sector. Let at least one of the conditions (1)-(3) of Theorem \ref{realEstiParameSymbol} be fulfilled. Let one of the numbers $s,s-l$ be equal to zero. If $a\in S_{\rho, \delta}^{m}(\nabla)$ is parameter-elliptic with respect to $\Lambda$, then the parametrix $A_\lambda^\sharp$ (from Theorem \ref{realEstiParameSymbol}) satisfies
    \begin{align*}
       \|A_\lambda^\sharp\|_{H^{s}\to H^{s-l}} \leq C_{k,l} \, (1+|\lambda|^{1/m})^{-m}, \text{ if } l\geq 0,\\
       \|A_\lambda^\sharp\|_{H^{s}\to H^{s-l}} \leq C_{k,l} \,(1+|\lambda|^{1/m})^{-(l+m)},  \text{ if } l\leq 0.
   \end{align*}
When $m=0$, we will have that $l\geq 0$ and the inequality is understood as  
\[
\|A_\lambda^\sharp\|_{H^s\to H^{s-l}}\leq C_l (1 + |\lambda|)^{-l}, 
\]
in particular, when $l=0$ the right hand side is only a constant. 
\end{thm}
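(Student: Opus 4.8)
\emph{Approach.} The plan is to reduce the estimate to an $L^2(M)\to L^2(M)$ bound by conjugating with the classical, $\lambda$-independent Bessel potentials $\mathcal{B}^{\pm s}$ of Subsection \ref{besselpotentials} --- this is where the hypothesis that one of $s$, $s-l$ vanishes is used --- and then to read off the dependence on $\lambda$ directly from the parameter-dependent symbol estimates of Theorem \ref{realEstiParameSymbol}, using that the $L^2$ operator norm of an operator of order $\le 0$ is controlled by finitely many of its symbol seminorms. I describe the case $m>0$; for $m=0$ the argument is identical, with the scalar factor $(1+|\lambda|)^{-1}$ in the role of $(|\lambda|^{1/m}+\langle\eta\rangle_{y})^{-m}$ and with $A_\lambda^\sharp$ of order zero.

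\emph{Steps.} Assume first $s-l=0$, so $s=l\ge -m$. Since $\mathcal{B}^{-l}$ is an isomorphism $L^2(M)\to H^l(M)$, we have $\|A_\lambda^\sharp\|_{H^l\to L^2}=\|A_\lambda^\sharp\mathcal{B}^{-l}\|_{L^2\to L^2}$. Because $\mathcal{B}^{-l}\in\Psi_{1,0}^{-l}(\Omega^\kappa,\nabla)\subseteq\Psi_{\rho,\delta}^{-l}(\Omega^\kappa,\nabla)$, the product formula \eqref{asyExp} (available under conditions (1)--(3) by Theorem \ref{producto}) together with Theorem \ref{realEstiParameSymbol} shows that $A_\lambda^\sharp\mathcal{B}^{-l}$ is a pseudo-differential operator of order $\le 0$ --- here $l\ge -m$ is used --- whose symbol $c_\lambda$ satisfies, for every compact $K\subset M$ and all multi-indices,
\begin{equation*}
|\partial_\eta^\alpha\nabla_{i_1}\cdots\nabla_{i_q}c_\lambda(y,\eta)|\le C_{K,\alpha,i_1,\ldots,i_q}\,(|\lambda|^{1/m}+\langle\eta\rangle_{y})^{-m}\langle\eta\rangle_{y}^{-l+\delta q-\rho|\alpha|},\qquad y\in K,
\end{equation*}
with constants independent of $\lambda$ (Leibniz, Theorem \ref{realEstiParameSymbol}, and $\sigma_{\mathcal{B}^{-l}}\in S_{1,0}^{-l}(\nabla)$); the case $s=0$ is symmetric, working with $\mathcal{B}^{-l}A_\lambda^\sharp$. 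Now Theorem \ref{L2bound}, together with the remark for $M$ compact, bounds $\|A_\lambda^\sharp\mathcal{B}^{-l}\|_{L^2\to L^2}$ by a constant multiple of $\max_{|\alpha|+q\le N_0}\sup_{y\in M,\,\langle\eta\rangle_{y}\ge 1}\langle\eta\rangle_{y}^{\rho|\alpha|-\delta q}|\partial_\eta^\alpha\nabla_{i_1}\cdots\nabla_{i_q}c_\lambda(y,\eta)|$ for a fixed $N_0=N_0(n,\rho,\delta)$, and by the previous display this is
\begin{equation*}
\le\ C\,\sup_{t\ge 1}(|\lambda|^{1/m}+t)^{-m}\,t^{-l}.
\end{equation*}
An elementary one-variable optimisation, using $l\ge -m$, shows that the last supremum is $\le C_{m,l}(1+|\lambda|^{1/m})^{-m}$ when $l\ge 0$ (extremum at $t=1$) and $\le C_{m,l}(1+|\lambda|^{1/m})^{-(l+m)}$ when $l\le 0$ (extremum at $t\sim|\lambda|^{1/m}$), which is exactly the claim. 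Equivalently and more structurally, one may instead write $A_\lambda^\sharp=\mathfrak{B}_\lambda^{-m}B_\lambda$ modulo a smoothing operator carrying the same $\lambda$-decay, where $\mathfrak{B}_\lambda^{-m}$ is the auxiliary operator of Definition \ref{auxOperator} and $B_\lambda$ is bounded in $\Psi_{\rho,\delta}^0(\Omega^\kappa,\nabla)$ uniformly in $\lambda$ (leading symbol $(|\lambda|^{1/m}+\langle\eta\rangle_{y})^m a^\sharp$, a bounded family in $S_{\rho,\delta}^0(\nabla)$ by Leibniz, the corrections from \eqref{asyExp} lying in $S_{\rho,\delta}^{-kr}(\nabla)$ uniformly), and conclude with Lemma \ref{lemmaBesselLamb} applied with $k=-m$ --- its hypothesis $l\ge k$ being the assumption $l\ge -m$ --- together with the $L^2$ bound for $B_\lambda$. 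For $m=0$ one runs the same argument with $(1+|\lambda|)^{-1}$ and the $m=0$ estimates of Theorem \ref{realEstiParameSymbol}.

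\emph{Main obstacle.} The substantive issue is $\lambda$-uniformity, in two places: (a) that $c_\lambda$ retains the full decay $(|\lambda|^{1/m}+\langle\eta\rangle_{y})^{-m}$ under all horizontal and vertical derivatives with $\lambda$-independent constants, which one traces through the asymptotic expansion \eqref{asymPara2} of $a^\sharp$ and the composition formula \eqref{asyExp} to the extra decay supplied by the curvature/torsion polynomials $P_{\beta,\gamma}^{(\kappa)}$, hence precisely to conditions (1)--(3) and the exponent $r$ of \eqref{rGenRhoMinusDelta}; and (b) that the $L^2$ operator-norm bound in Theorem \ref{L2bound} depends only on a fixed finite number of symbol seminorms (its proof being of Calder\'on--Vaillancourt type). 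With these in hand, the remaining computation --- the one-variable optimisation above, or Lemma \ref{lemmaBesselLamb} --- is routine.
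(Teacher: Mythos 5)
Your proposal is correct, and your ``more structural'' alternative --- factoring $A_\lambda^\sharp=\mathfrak{B}_\lambda^{-m}B_\lambda$ with $B_\lambda$ uniformly bounded of order zero and invoking Lemma \ref{lemmaBesselLamb} --- is precisely the paper's proof: there one writes $A_\lambda^\sharp=\mathfrak{B}_\lambda^{-m}(\mathfrak{B}_\lambda^{m}A_\lambda^\sharp)$ (or the right-handed factorisation), lets Lemma \ref{lemmaBesselLamb} supply all of the $\lambda$-decay through $\|\mathfrak{B}_\lambda^{-m}\|_{H^s\to H^{s-l}}$, and reduces everything to the single uniform bound $\|\mathfrak{B}_\lambda^{m}A_\lambda^\sharp\|_{L^2\to L^2}\leq C$, obtained by rerunning Safarov's $L^2$-boundedness argument with the $\lambda$-uniform seminorms of Theorem \ref{realEstiParameSymbol}. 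Your primary route differs only in bookkeeping: you conjugate with the classical, $\lambda$-independent Bessel potentials $\mathcal{B}^{\pm l}$ and extract the factor $(|\lambda|^{1/m}+\langle\eta\rangle_{y})^{-m}$ directly from the seminorms of the composed symbol inside the Calder\'on--Vaillancourt bound, finishing with the elementary optimisation $\sup_{t\geq 1}(|\lambda|^{1/m}+t)^{-m}t^{-l}$. Both versions rest on exactly the two uniformity points you isolate at the end --- that the composition formula preserves the $\lambda$-uniform decay, and that the $L^2$ operator norm is controlled by finitely many symbol seminorms --- so they are of equal depth; the paper's factorisation merely packages the optimisation once and for all into Lemma \ref{lemmaBesselLamb}, while your primary version redoes it inline, at the price of having to track the $\lambda$-dependence of the symbol of $A_\lambda^\sharp\mathcal{B}^{-l}$ through Theorem \ref{producto} rather than only that of $\mathfrak{B}_\lambda^{m}A_\lambda^\sharp$. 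One small remark: for $m=0$ your argument (like the paper's, which is silent on this degenerate case where $\mathfrak{B}_\lambda^{0}\equiv I$) naturally produces the decay $(1+|\lambda|)^{-1}$ on $L^2$, which recovers the stated bound $C_l(1+|\lambda|)^{-l}$ only for $0\leq l\leq 1$; this is an issue with the formulation of the $m=0$ case in the statement rather than a gap in your proof relative to the source.
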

\begin{proof}
First, let us suppose $s=l=m=0$. In this case we need to prove that $A_\lambda^\sharp$ is bounded from $L^2$ to $L^2$. This follows by mimicking the Safarov's proof of Theorem \ref{L2bound}, that is, following the classical argument by H\"ormander. The argument relies on the estimates from Theorem \ref{realEstiParameSymbol}, which guarantee uniform bounds on $\lambda$, ensuring that the final constant $C$ is independent on $\lambda$. Now for the general case we take advantage of the operators $\mathfrak{B}_{\lambda}^{m}$ defined previously, so we have 
\[
\|A_\lambda^\sharp\|_{H^{s}\to H^{s-l}} =\begin{cases}
    \|\mathfrak{B}_{\lambda}^{-m}(\mathfrak{B}_{\lambda}^{m}A_\lambda^\sharp)\|_{H^{s}\to H^{s-l}}\leq \|\mathfrak{B}_{\lambda}^{-m}\|_{H^{s}\to H^{s-l}}\|\mathfrak{B}_{\lambda}^{m}A_\lambda^\sharp\|_{H^{s}\to H^{s}}\\
    \|(A_\lambda^\sharp\mathfrak{B}_{\lambda}^{m})\mathfrak{B}_{\lambda}^{-m}\|_{H^{s}\to H^{s-l}} \leq \|\mathfrak{B}_{\lambda}^{m}A_\lambda^\sharp\|_{H^{s-l}\to H^{s-l}} \|\mathfrak{B}_{\lambda}^{-m}\|_{H^{s}\to H^{s-l}}.
\end{cases} 
\]
Because of Lemma \ref{lemmaBesselLamb} and the hypothesis that $s$ or $s-l$ is zero, the only remaining inequality to prove is 
\[
\|\mathfrak{B}_{\lambda}^{m}A_\lambda^\sharp\|_{L^2\to L^2}\leq C. 
\]
Notice that because of Theorem \ref{realEstiParameSymbol}, $A_\lambda^\sharp$ is a pseudo-differential operator of order $-m$, so that the operator $\mathfrak{B}_{\lambda}^{m}A_\lambda^\sharp$ would be some zero order operator (for a fixed $\lambda$) and thus this again follows by Safarov's argument.     
\end{proof}
\begin{rem}Notice that if $\delta=0$, one can remove of the conditions $s$ or $s-l$ equal to zero. In fact, in this case the constants 
\[
C_{\alpha,q}(\lambda) = \sup_{(x,\eta)}\, (|\lambda|^{1/m} + \langle\eta\rangle_{y})^{-m(k+1)}\langle\eta\rangle_{y}^{\delta q-\rho|\alpha|}
\]
are bounded as $|\lambda|$ goes to infinity. Consequently, the Sobolev norms can be estimated, for instance, by means of Schur’s lemma.
\end{rem}
So far we have obtained that for a parameter-elliptic symbol $a\in S_{\rho, \delta}^{m}(\nabla)$, there exits a parametrix $A_\lambda^\sharp$ for the operator $a(X,D)-\lambda I$ such that the operator
\begin{equation}\label{opError}
    R_\lambda:= A_\lambda^\sharp(a(X,D)-\lambda I)-I
\end{equation} is smoothing. We will also need  estimates of Sobolev norms of this operator to finish our approximation of the resolvent. 
\begin{thm}\label{EstSoboErro}
     Let $0\leq \delta<\rho \leq 1$, $m\geq 0$, and let $\Lambda\subset \bbC$ be a sector. Let at least one of the conditions (1)-(3) of Theorem \ref{realEstiParameSymbol} be fulfilled. Let $s,t\in\bbR$ such that $t\leq s$. If $a\in S_{\rho, \delta}^{m}(\nabla)$ is parameter-elliptic with respect to $\Lambda$, then the operator $R_\lambda$ given by \eqref{opError} satisfies for any $N>0$
    \[
    \|R_\lambda\|_{H^{s}\to H^{t}} \leq C_{m,s,t} \,\frac{1}{(1+|\lambda|)^{N}}, 
    \]
    i.e. the operator $R_\lambda$ is smoothing in a parameter-sense. 
\end{thm}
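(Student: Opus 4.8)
The strategy is to exploit the structure of the parametrix built in the proof of Theorem~\ref{realEstiParameSymbol}. Recall that there $A_\lambda^\sharp = EB_0$, where $B_0$ has symbol $\chi(y,\eta,\lambda)(a(y,\eta)-\lambda)^{-1}$, the composition satisfies $B_0(a(X,D)-\lambda I) = I + \mathfrak{R}$ with $\mathfrak{R}\in\Psi_{\rho,\delta}^{-r}(\Omega^\kappa,\nabla)$ for $r$ as in \eqref{rGenRhoMinusDelta}, and $E\sim\sum_{j\geq0}(-1)^j\mathfrak{R}^j$. Fix $\mathcal{N}\in\bbN$ and split $E = \sum_{j=0}^{\mathcal{N}-1}(-1)^j\mathfrak{R}^j + E_{\mathcal{N}}$, where $E_{\mathcal{N}}$ carries the tail of the asymptotic series and is therefore a pseudo-differential operator of order $-r\mathcal{N}$ whose symbol is asymptotic to $\sum_{j\geq\mathcal{N}}(-1)^j\sigma_{\mathfrak{R}^j}$. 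A short telescoping computation using $B_0(a(X,D)-\lambda I)=I+\mathfrak{R}$ gives
\[
R_\lambda = A_\lambda^\sharp(a(X,D)-\lambda I)-I = (-1)^{\mathcal{N}-1}\mathfrak{R}^{\mathcal{N}} + E_{\mathcal{N}}(I+\mathfrak{R}),
\]
so it is enough to estimate the $H^s\to H^t$ norm of each summand by $C(1+|\lambda|)^{-N}$, choosing $\mathcal{N}$ large in terms of $N$, $s$, $t$.

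The key point is a parameter-dependent symbol estimate, proved by induction on $\mathcal{N}$ from the composition formula \eqref{asyExp} of Theorem~\ref{producto} and the estimate \eqref{estimadoError} on $\mathfrak{r}$ and its horizontal/vertical derivatives. Every term occurring in the symbol expansion of $\mathfrak{R}^{\mathcal{N}}$ — and likewise of $E_{\mathcal{N}}$ — is a finite sum of products of $\mathcal{N}$ factors, each of which is a derivative $\partial_\eta^{\alpha'}\nabla_{i'_1}\cdots\nabla_{i'_{q'}}\mathfrak{r}$ multiplied by a curvature polynomial $P_{\beta,\gamma}^{(\kappa)}$; since by \eqref{estimadoError} each such factor retains the gain $(|\lambda|^{1/m}+\langle\eta\rangle_y)^{-m}$, one obtains, for $m>0$,
\[
|\partial_\eta^\alpha\nabla_{i_1}\cdots\nabla_{i_q}\sigma(y,\eta,\lambda)| \leq C_{\mathcal{N},\alpha,q}\,(|\lambda|^{1/m}+\langle\eta\rangle_y)^{-m\mathcal{N}}\langle\eta\rangle_y^{\,m\mathcal{N}-r\mathcal{N}+\delta q-\rho|\alpha|}
\]
for $\sigma=\sigma_{\mathfrak{R}^{\mathcal{N}}}$ and, after one more composition with the order-zero family $I+\mathfrak{R}$, for $\sigma=\sigma_{E_{\mathcal{N}}(I+\mathfrak{R})}$; when $m=0$ the same holds with $(|\lambda|^{1/m}+\langle\eta\rangle_y)^{-m\mathcal{N}}$ replaced by $(1+|\lambda|)^{-\mathcal{N}}$ and the factor $\langle\eta\rangle_y^{m\mathcal{N}}$ dropped.

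To finish, split the parameter factor: for every $\theta\in[0,1]$ one has $(|\lambda|^{1/m}+\langle\eta\rangle_y)^{m}\geq 2^{-\theta}(1+|\lambda|)^{\theta}\langle\eta\rangle_y^{m(1-\theta)}$, whence
\[
|\partial_\eta^\alpha\nabla_{i_1}\cdots\nabla_{i_q}\sigma(y,\eta,\lambda)|\leq C\,(1+|\lambda|)^{-\theta\mathcal{N}}\langle\eta\rangle_y^{(m\theta-r)\mathcal{N}+\delta q-\rho|\alpha|}.
\]
Picking $\theta\in(0,r/m)$ (and $\theta=1$ if $m=0$), the $\eta$-exponent becomes arbitrarily negative for $\mathcal{N}$ large, so that $(1+|\lambda|)^{\theta\mathcal{N}}\mathfrak{R}^{\mathcal{N}}$ and $(1+|\lambda|)^{\theta\mathcal{N}}E_{\mathcal{N}}(I+\mathfrak{R})$ are families, uniformly bounded in $\lambda$, of operators in $\Psi_{\rho,\delta}^{(m\theta-r)\mathcal{N}}(\Omega^\kappa,\nabla)$. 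Since $M$ is compact, Theorem~\ref{L2bound} gives a uniform bound $H^s\to H^{s-(m\theta-r)\mathcal{N}}$, and as $t\leq s$ and $(r-m\theta)\mathcal{N}>0$ the embedding $H^{s-(m\theta-r)\mathcal{N}}\hookrightarrow H^t$ is continuous; hence $\|\mathfrak{R}^{\mathcal{N}}\|_{H^s\to H^t}$ and $\|E_{\mathcal{N}}(I+\mathfrak{R})\|_{H^s\to H^t}$ are both $\leq C(1+|\lambda|)^{-\theta\mathcal{N}}$. Taking $\mathcal{N}\geq N/\theta$ yields $\|R_\lambda\|_{H^s\to H^t}\leq C_{m,s,t}(1+|\lambda|)^{-N}$, with the extension to a full neighborhood of $0$ handled as in Remark~\ref{extensionToSmallBall}.

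I expect the main obstacle to be the inductive verification of the parameter-dependent symbol estimate: one must check that repeated use of the composition formula \eqref{asyExp}, with its $P_{\beta,\gamma}^{(\kappa)}$ corrections and Leibniz expansions, never destroys the accumulation of the $\mathcal{N}$ gains $(|\lambda|^{1/m}+\langle\eta\rangle_y)^{-m}$ — this relies crucially on \eqref{estimadoError} being stable under further differentiation — and that the Borel-type asymptotic summation defining $E$ can be carried out uniformly in the parameter $\lambda$, so that $E_{\mathcal{N}}$ genuinely inherits the stated bound. Everything else is routine pseudo-differential bookkeeping already supplied by Theorems~\ref{producto} and~\ref{L2bound}.
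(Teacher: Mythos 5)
Your proposal is correct, and it takes a more explicit (and in one place genuinely different) route than the paper. The paper's own proof conjugates with Bessel potentials to reduce to an $L^2\to L^2$ bound, asserts the symbol estimate \eqref{estResolIntermedioPrueba} with a \emph{single} factor $(|\lambda|^{1/m}+\langle\eta\rangle_{y})^{-m}$ and arbitrary $\eta$-decay, and then takes a supremum in $\eta$. As written, that supremum is attained at bounded $\langle\eta\rangle_{y}$ and equals $(1+|\lambda|^{1/m})^{-m}\approx(1+|\lambda|)^{-1}$, so a single parameter gain cannot produce $(1+|\lambda|)^{-N}$ for large $N$; the intended estimate must carry accumulated gains. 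Your decomposition $R_\lambda=(-1)^{\mathcal{N}-1}\mathfrak{R}^{\mathcal{N}}+E_{\mathcal{N}}(I+\mathfrak{R})$ (the telescoping is correct), the inductive bound with $\mathcal{N}$ factors $(|\lambda|^{1/m}+\langle\eta\rangle_{y})^{-m}$, and the interpolation $(|\lambda|^{1/m}+\langle\eta\rangle_{y})^{m}\gtrsim(1+|\lambda|)^{\theta}\langle\eta\rangle_{y}^{m(1-\theta)}$ with $0<\theta<r/m$ is exactly the mechanism that converts these accumulated gains into $(1+|\lambda|)^{-\theta\mathcal{N}}$ times a symbol of arbitrarily negative order, after which Theorem \ref{L2bound} (whose constants depend only on finitely many symbol seminorms, hence are uniform in $\lambda$) and the embedding $H^{s+(r-m\theta)\mathcal{N}}\hookrightarrow H^{t}$ for $t\leq s$ finish the argument. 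What your route buys is a proof of the quantitative $\lambda$-decay that the paper's sup-in-$\eta$ computation does not actually deliver; what it costs is the bookkeeping of the composition formula, plus the one point you correctly flag and which both proofs share: the Borel summation defining $E$ must be performed with cutoffs independent of $\lambda$ so that $E_{\mathcal{N}}$ inherits the parameter-dependent symbol estimates. This is standard (the excision function and the scales $\epsilon_j$ can be chosen depending only on the $\lambda$-uniform seminorms of the $\mathfrak{r}^{j}$), but it should be stated explicitly; with that addition your argument is complete.
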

\begin{proof}
    Once again we use the Bessel operators $\mathcal{B}^k$ to calculate these norms, hence 
    \[
     \|R_\lambda\|_{H^{s}\to H^{t}} =  \|\mathcal{B}^t R_\lambda \mathcal{B}^{-s}\|_{L^2\to L^2} = \|\mathcal{B}^{t-s}R_\lambda \|_{L^2\to L^2}.
    \]
    From the proof of Theorem \ref{realEstiParameSymbol} and following a similar procedure we did for the estimate \eqref{estimadoError} we obtain that that the symbol of $R_\lambda$ satisfies the following inequalities for any $N>0$
    \begin{equation}\label{estResolIntermedioPrueba}
       |\sigma_{R_\lambda}(y,\eta,\lambda)| \leq C_{K} (|\lambda|^{1/m} + \langle\eta\rangle_{y})^{-m}\langle\eta\rangle_{y}^{m-N}.
    \end{equation}
    Thus the result follows from a local calculation using Fourier transform as in the case of Lemma \ref{lemmaBesselLamb}. Indeed, we have that 
\begin{align*}
    \|\mathcal{B}^{t-s}R_\lambda \|_{L^2\to L^2}\leq \sup_{\eta\in\bbR^n}  C_{K} (|\lambda|^{1/m} + \langle\eta\rangle_{y})^{-m}\langle\eta\rangle_{y}^{m-N} \langle\eta\rangle_{y}^{t-s}&\leq C_{s,t} \,(1+|\lambda|^{1/m})^{-N+t-s}\\
    &\leq  C_{m,s,t} \,(1+|\lambda|)^{-N},
\end{align*}
where in the last inequality we used that $t\leq s$. When $m=0$, instead of \eqref{estResolIntermedioPrueba} we will have the following estimate for any $N>0$
\[
|\sigma_{R_\lambda}(y,\eta,\lambda)| \leq C_{K} (|\lambda| + \langle\eta\rangle_{y})^{-1}\langle\eta\rangle_{y}^{-N},
\]
and the result follows analogously. 
\end{proof}
At this point we have collected all the tools we needed to estimate the norm of the resolvent operator $(a(X,D)-\lambda I)^{-1}$. 
\begin{thm}\label{EstimadoFinalResolvente}
   Let $0\leq \delta<\rho \leq 1$, $m\geq 0$, $l\geq -m$,
    $s\in\bbR$, and let $\Lambda\subset \bbC$ be a sector. Let at least one of the conditions (1)-(3) of Theorem \ref{realEstiParameSymbol} be fulfilled. Let one of the numbers $s,s-l$ be equal to zero. If $a\in S_{\rho, \delta}^{m}(\nabla)$ is parameter-elliptic with respect to $\Lambda$, then the operator $a(X,D)-\lambda I$ is invertible in $H^s$ for $|\lambda|$ sufficiently large and $\lambda$ in the sector $\Lambda$. Moreover, the following estimates hold 
    \begin{align*}
       \|(a(X,D)-\lambda I)^{-1}\|_{H^{s}\to H^{s-l}} \leq C_{m,l} \, (1+|\lambda|^{1/m})^{-m}, \text{ if } l\geq 0,\\
       \|(a(X,D)-\lambda I)^{-1}\|_{H^{s}\to H^{s-l}} \leq C_{m,l} \,(1+|\lambda|^{1/m})^{-(l+m)},  \text{ if } l\leq 0.
   \end{align*}
\end{thm}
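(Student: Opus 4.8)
The strategy is to upgrade the parameter-parametrix $A_\lambda^\sharp$ of Theorem \ref{realEstiParameSymbol} to a genuine inverse of $a(X,D)-\lambda I$ by a Neumann series in the parameter-smoothing remainder, and then to transfer to the resolvent the operator-norm bounds for $A_\lambda^\sharp$ established in Theorem \ref{normParametrix}.

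First I would record the two parametrix identities coming from Theorem \ref{realEstiParameSymbol}. With $R_\lambda:=A_\lambda^\sharp(a(X,D)-\lambda I)-I$ as in \eqref{opError}, and likewise $\widetilde{R}_\lambda:=(a(X,D)-\lambda I)A_\lambda^\sharp-I$, both operators are parameter-smoothing. By Theorem \ref{EstSoboErro} applied with equal Sobolev exponents, for every $N>0$ and every fixed $t\in\bbR$ one has $\|R_\lambda\|_{H^{t}\to H^{t}}+\|\widetilde{R}_\lambda\|_{H^{t}\to H^{t}}\leq C_{N,t}\,(1+|\lambda|)^{-N}$, so there is a $\lambda_0>0$ for which these norms are $\leq 1/2$ whenever $\lambda\in\Lambda$, $|\lambda|\geq\lambda_0$. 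On that range $I+R_\lambda$ and $I+\widetilde{R}_\lambda$ are therefore invertible through their Neumann series, with $\|(I+R_\lambda)^{-1}\|\leq 2$ and $\|(I+\widetilde{R}_\lambda)^{-1}\|\leq 2$ on every fixed $H^t$, uniformly in $\lambda$; moreover, since $R_\lambda$ and $\widetilde{R}_\lambda$ are fixed pseudo-differential operators, these inverses are the same operator regardless of which Sobolev space one computes them on.

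Next I would note that $(I+R_\lambda)^{-1}A_\lambda^\sharp$ is a left inverse and $A_\lambda^\sharp(I+\widetilde{R}_\lambda)^{-1}$ a right inverse of $a(X,D)-\lambda I$, viewed as the bounded map $H^{s+m}\to H^{s}$; having a one-sided inverse on each side, $a(X,D)-\lambda I$ is invertible for $\lambda\in\Lambda$ with $|\lambda|\geq\lambda_0$, and
\[
(a(X,D)-\lambda I)^{-1}=(I+R_\lambda)^{-1}A_\lambda^\sharp .
\]
Composing $A_\lambda^\sharp\colon H^{s}\to H^{s-l}$ with $(I+R_\lambda)^{-1}\colon H^{s-l}\to H^{s-l}$ and using $\|(I+R_\lambda)^{-1}\|_{H^{s-l}\to H^{s-l}}\leq 2$ gives
\[
\|(a(X,D)-\lambda I)^{-1}\|_{H^{s}\to H^{s-l}}\leq 2\,\|A_\lambda^\sharp\|_{H^{s}\to H^{s-l}},
\]
and Theorem \ref{normParametrix} now furnishes the two bounds $\|A_\lambda^\sharp\|_{H^{s}\to H^{s-l}}\leq C_{m,l}(1+|\lambda|^{1/m})^{-m}$ when $l\geq 0$ and $\leq C_{m,l}(1+|\lambda|^{1/m})^{-(l+m)}$ when $l\leq 0$, which are precisely the claimed estimates. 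For $m=0$ the same argument runs verbatim with the $m=0$ versions of Theorems \ref{EstSoboErro} and \ref{normParametrix}, the conclusion being $\|(a(X,D)-\lambda I)^{-1}\|_{H^{s}\to H^{s-l}}\leq C_l(1+|\lambda|)^{-l}$.

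The only delicate point — more bookkeeping than a real obstacle — is to keep all constants genuinely uniform in $\lambda$ over the sector: this is exactly what the $(|\lambda|^{1/m}+\langle\eta\rangle_{y})$-weighted estimates of Theorems \ref{ParaEllEst}, \ref{realEstiParameSymbol} and \ref{EstSoboErro} were designed to provide, the decisive facts being that the remainders decay faster than any power of $|\lambda|$ and that the $L^2$-boundedness argument underlying Theorem \ref{normParametrix} only uses finitely many $\lambda$-uniform symbol seminorms. One must also retain the hypothesis that one of $s$, $s-l$ vanishes (inherited from Theorem \ref{normParametrix}), and observe that since invertibility is only asserted for $|\lambda|$ large, no appeal to Remark \ref{extensionToSmallBall} is needed here.
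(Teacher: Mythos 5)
Your proposal is correct and follows essentially the same route as the paper: invert $I+R_\lambda$ by a Neumann series once Theorem \ref{EstSoboErro} makes $\|R_\lambda\|$ small for large $|\lambda|$ in the sector, identify $(I+R_\lambda)^{-1}A_\lambda^\sharp$ as the inverse, and read off the norm bounds from Theorem \ref{normParametrix}. Your treatment is in fact slightly tidier than the paper's, which handles the right inverse only with a "similarly" where you introduce $\widetilde{R}_\lambda$ explicitly, and your bound $\|(I+R_\lambda)^{-1}\|\le 2$ corrects a small slip in the paper's geometric-series estimate.
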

\begin{proof}
    Let us assume $s=0$, the case $s+l=0$ is analogous. Let $|\lambda|$ be sufficiently large so that by Theorem \ref{EstSoboErro} we have that 
    \begin{equation}\label{EstGeometricaInversa}
        \|R_\lambda\|_{L^2\to L^2}<\frac{1}{2} \text{ and } \|R_\lambda\|_{H^l\to H^l}<\frac{1}{2}.
    \end{equation}
    Thus the operator 
    \[
    \sum_{j=0}^\infty (-R_\lambda)^j A_\lambda^\sharp 
    \]
    defines a left inverse of $a(X,D)-\lambda I$ on $L^2$ since 
    \[
    (I- (-R_\lambda))^{-1} = \sum_{j=0}^\infty (-R_\lambda)^j \text{ and } A_\lambda^\sharp(a(X,D)-\lambda I)= I+ R_\lambda.
    \]
    Furthermore, the estimates for the Sobolev norms follow from \eqref{EstGeometricaInversa} and Theorem \ref{normParametrix} since 
    \[
    \sum_{j=0}^\infty \|(-R_\lambda)^j\|_{H^l\to H^l}<1 \text{ and } \|A_\lambda^\sharp\|_{L^2\to H^{l}} \leq \begin{cases}
    C_{m,l} (1+|\lambda|^{1/m})^{-m}, &\text{ if } l\geq 0\\
    C_{m,l} \,(1+|\lambda|^{1/m})^{-(l+m)},  &\text{ if } l\leq 0.\end{cases}
    \]
    Similarly we get that such operator is also a right inverse, completing the proof. 
\end{proof}
\begin{rem}
    Notice that the last theorem implies, in particular, that for $|\lambda|$ sufficiently large, we have 
    \[
    \|(a(X,D)-\lambda I)^{-1}\|_{L^{2}\to L^{2}} \leq C \,\frac{1}{|\lambda|},
    \] 
    as we promised at the beginning of Subsection \ref{parameter-ellipticity}. 
\end{rem}
    
Finally, we conclude this subsection by proving that indeed the parametrix $A_\lambda^\sharp$ approximates the resolvent operator.  
\begin{cor}\label{normResolvent}
     Let $0\leq \delta<\rho \leq 1$, $m\geq 0$, $l\geq -m$, $s\in\bbR$, and let $\Lambda\subset \bbC$ be a sector. Let at least one of the conditions (1)-(3) of Theorem \ref{realEstiParameSymbol} be fulfilled. Let one of the numbers $s,s-l$ be equal to zero. If $a\in S_{\rho, \delta}^{m}(\nabla)$ is parameter-elliptic with respect to $\Lambda$, then for any $N>0$, we have 
    \[
    \|A_\lambda^\sharp - (a(X,D)-\lambda I)^{-1}\|_{H^{s}\to H^{s-l}} \leq C_{m,l} \,\frac{1}{(1+|\lambda|)^{N}}, 
    \] 
    i.e. the operator $A_\lambda^\sharp - (a(X,D)-\lambda I)^{-1}$ is smoothing in a parameter-sense. 
\end{cor}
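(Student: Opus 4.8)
The plan is to deduce this corollary directly from the parametrix identity \eqref{opError} together with the two quantitative estimates already at our disposal: the uniform-in-$\lambda$ boundedness of the resolvent from Theorem \ref{EstimadoFinalResolvente} and the parameter-smoothing estimate for the error operator $R_\lambda$ from Theorem \ref{EstSoboErro}. In other words, no new analysis is needed here; the corollary is essentially a packaging of these facts.

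First I would rewrite the difference algebraically. By \eqref{opError} one has $A_\lambda^\sharp\,(a(X,D)-\lambda I)=I+R_\lambda$, and since Theorem \ref{EstimadoFinalResolvente} guarantees that $a(X,D)-\lambda I$ is invertible for $|\lambda|$ large in $\Lambda$, multiplying this identity on the right by $(a(X,D)-\lambda I)^{-1}$ gives
\[
A_\lambda^\sharp=(a(X,D)-\lambda I)^{-1}+R_\lambda\,(a(X,D)-\lambda I)^{-1},\qquad\text{i.e.}\qquad A_\lambda^\sharp-(a(X,D)-\lambda I)^{-1}=R_\lambda\,(a(X,D)-\lambda I)^{-1}.
\]
Next I would estimate the right-hand side by factoring the composition through $H^{s-l}$. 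Since $a(X,D)-\lambda I$ has order $m$ and $l\geq -m$, the resolvent maps $H^{s}$ into $H^{s+m}\subseteq H^{s-l}$, and because one of $s$, $s-l$ vanishes, Theorem \ref{EstimadoFinalResolvente} applies with all its hypotheses met and yields $\|(a(X,D)-\lambda I)^{-1}\|_{H^{s}\to H^{s-l}}\leq C_{m,l}$ uniformly in $\lambda$ (both $(1+|\lambda|^{1/m})^{-m}$ and, when $-m\leq l\leq 0$, $(1+|\lambda|^{1/m})^{-(l+m)}$ are bounded). On the other hand, Theorem \ref{EstSoboErro} applied on the single space $H^{s-l}$ gives, for every $N'>0$, $\|R_\lambda\|_{H^{s-l}\to H^{s-l}}\leq C_{N'}(1+|\lambda|)^{-N'}$. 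Composing these bounds and taking $N'=N$ produces
\[
\|A_\lambda^\sharp-(a(X,D)-\lambda I)^{-1}\|_{H^{s}\to H^{s-l}}\leq \|R_\lambda\|_{H^{s-l}\to H^{s-l}}\,\|(a(X,D)-\lambda I)^{-1}\|_{H^{s}\to H^{s-l}}\leq C_{m,l}\,(1+|\lambda|)^{-N},
\]
which is the assertion. The case $m=0$ is handled in exactly the same way, using the $m=0$ versions of Theorems \ref{EstimadoFinalResolvente} and \ref{EstSoboErro}; and for $\lambda$ in a bounded subset of $\Lambda$ (hence also of $\Lambda_\epsilon$ in the sense of Remark \ref{extensionToSmallBall}) the estimate is automatic once invertibility there is known, since the left-hand side is then simply bounded.

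I do not expect a genuine obstacle: the substantive content sits in Theorems \ref{realEstiParameSymbol}, \ref{EstimadoFinalResolvente} and \ref{EstSoboErro}. The only point requiring a little care is the Sobolev-exponent bookkeeping, namely using $l\geq -m$ to ensure that $(a(X,D)-\lambda I)^{-1}$ really lands in $H^{s-l}$ and that the indices fed into Theorem \ref{EstimadoFinalResolvente} are admissible (in particular the condition that one of $s$, $s-l$ be zero, which is inherited verbatim from the statement of this corollary), together with the remark that for each fixed $\lambda$ the operator $R_\lambda$ is genuinely smoothing, so that the intermediate Sobolev norms make sense.
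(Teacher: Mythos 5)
Your proposal is correct and follows essentially the same route as the paper: the same factorization $A_\lambda^\sharp-(a(X,D)-\lambda I)^{-1}=R_\lambda\,(a(X,D)-\lambda I)^{-1}$, followed by applying Theorem \ref{EstSoboErro} to $R_\lambda$ on $H^{s-l}\to H^{s-l}$ and Theorem \ref{EstimadoFinalResolvente} to the resolvent on $H^{s}\to H^{s-l}$, and composing the two bounds. The extra bookkeeping you mention (the role of $l\geq -m$ and the uniform boundedness of the resolvent factor) matches what the paper does implicitly in its final chain of inequalities.
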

\begin{proof}
    Notice that we can factor out the desired difference of operators in the following way:
    \[
    A_\lambda^\sharp - (a(X,D)-\lambda I)^{-1} = (A_\lambda^\sharp(a(X,D)-\lambda I) - I)(a(X,D)-\lambda I)^{-1}.
    \]
    Hence by Theorem \ref{EstSoboErro} and Theorem \ref{EstimadoFinalResolvente} we have that 
    \begin{align*}
        \|A_\lambda^\sharp - (a(X,D)-&\lambda I)^{-1}\|_{H^{s}\to H^{s-l}} \\
        &\leq \|A_\lambda^\sharp(a(X,D)-\lambda I) - I\|_{H^{s-l}\to H^{s-l}}\|(a(X,D)-\lambda I)^{-1}\|_{H^{s}\to H^{s-l}}\\
        &= \|R_\lambda\|_{H^{s-l}\to H^{s-l}}\|(a(X,D)-\lambda I)^{-1}\|_{H^{s}\to H^{s-l}}\\
        &\leq  C_{m,l} \,\frac{1}{(1+|\lambda|)^{N}}\cdot \begin{cases}
             (1+|\lambda|^{1/m})^{-m}, &\text{ if } l\geq 0\\
    (1+|\lambda|^{1/m})^{-(l+m)},  &\text{ if } l\leq 0
        \end{cases} \\
        & \leq  C_{m,l} \,\frac{1}{(1+|\lambda|)^{N}}
    \end{align*}
    for any $N>0$, which  finishes the proof. 
\end{proof}

\section{Holomorphic functional calculus}\label{Section 4}
In this section, we develop the holomorphic functional calculus for operators in the classes $\Psi_{\rho, \delta}^m\left(\Omega^\kappa, \nabla\right)$ using the results from the previous section. Once this framework is established, in Section \ref{Section 5}, we turn our attention to key applications of our main result. 

Let $A$ be a parameter-elliptic pseudo-differential operator in $\Psi_{\rho, \delta}^{m}\left(\Omega^{\kappa}, \nabla\right)$ with respect to a sector $\Lambda\subset\bbC$ on a closed Riemannian manifold $(M,g)$. Then by Theorem \ref{EstimadoFinalResolvente} the resolvent $(A-\lambda I)^{-1}$ is well defined for $\Lambda$. Moreover, Theorem \ref{EstimadoFinalResolvente} together with a standard resolvent formula and the theory of compact operators imply that the spectrum of $A$ is discrete, so that we can find a ray of minimal growth $R_\theta$ (see Definition \ref{agmonAngle}) in the sector $\Lambda$ such that $\sigma(A)\cap R_\theta = \emptyset$ or $\sigma(A)\cap R_\theta = \{0\}$. Henceforth, for simplification, we assume the following: 
\begin{enumerate}
    \item zero is not in the spectrum of $A$, so that actually there is not a point of the spectrum inside $|\lambda|<\epsilon$;
    \item the ray of minimal growth is $R_\theta=(-\infty,0]$.
\end{enumerate}
\begin{rem}
  We emphasise that we are not losing generality under these assumptions because always we can consider the operators $A+\varepsilon$ or $e^{i\theta}A$ instead of $A$.  
\end{rem}
Therefore by Remark \ref{extensionToSmallBall} we have that $(A-\lambda I)^{-1}$ is well-defined for $\Lambda_\epsilon$. Thus, for a good enough function $f$ holomorphic on $\bbC\setminus \Gamma$, where $\Gamma$ is de boundary of $\Lambda_\epsilon$, we can define the operator $f(A)$ using a Dunford-Riesz integral along the ray of minimal growth:
\begin{equation}\label{functionalCalculus}
    f(A) = \frac{1}{2\pi i}\int_{\Gamma} f(\lambda) (A-\lambda I)^{-1}\, d\lambda. 
\end{equation} 
Notice that, by Cauchy's integral formula, this definition is invariant under homotopic deformations of the contour. Hence, we can deform $\Gamma$ to a curve that simplifies the computations. Indeed, let $\Gamma = \Gamma_1\cup\Gamma_2\cup\Gamma_3$ be the following contour
\begin{equation}\label{contorno}
    \Gamma = \begin{cases}
    re^{i\pi} \,(+\infty>r>\epsilon)\text{ on } \Gamma_1,\\
    \epsilon e^{i\theta}\, (\pi>\theta>-\pi) \text{ on } \Gamma_2,\\
    re^{-i \pi}\, (\epsilon<r<+\infty)\text{ on } \Gamma_3.
\end{cases}
\end{equation}

\begin{center}
    \begin{tikzpicture}
    \draw[help lines, color=gray!30, dashed] (-3.9,-1.9) grid (1.9,1.9);
\draw [->] (-4,0)--(2,0) node[right]{$\text{Re}$};
\draw [->] (0,-2)--(0,2) node[above]{$\text{Im}$};
  \draw[-,blue, thick] (-4,0)--(0,0) node[xshift=-1cm, below]{$\resizebox{.02\hsize}{!}{$R_\theta$}$};

    \begin{scope}[even odd rule, decoration={
    markings,
    mark=at position 0.5 with {\arrow{>}}}]
        \clip (2,-2) rectangle (-4,2) (0,0) circle (1);
        \draw [postaction={decorate},thick] (-4,0.5) -- (0,0.5) node[xshift=-3cm, above]{$ \resizebox{.02\hsize}{!}{$\Gamma_1$} $};
        \draw [postaction={decorate},thick] (0,-0.5) -- (-4,-0.5) node[xshift=1cm, below]{$\resizebox{.02\hsize}{!}{$\Gamma_3$}$};
        
    \end{scope}

    \begin{scope}[even odd rule, decoration={
    markings,
    mark=at position 0.15 with {\arrow{<}},
    mark=at position 0.85 with {\arrow{<}}}]
        \clip (-2,-2) rectangle (3,2) (0,0.5) rectangle (-3,-0.5);
        \draw [postaction={decorate}, thick] (0,0) circle (1) node[xshift=1.2cm, above]{$\resizebox{.02\hsize}{!}{$\Gamma_2$}$};
    \end{scope}
\end{tikzpicture}
\end{center}

Under our assumptions, this contour is precisely the boundary of our sector $\Lambda_\epsilon$, so from now on it is fixed. 

We note that the operator \eqref{functionalCalculus} is well-defined on $L^2$ due to the estimates established in Theorem \ref{EstimadoFinalResolvente}. Our primary interest is in determining when $f(A)$ remains a pseudo-differential operator. To this end, it is essential to analyze whether it admits a well-defined symbol belonging to the global symbol classes $S_{\rho, \delta}^{m}(\nabla)$. With this goal in mind, in the previous section, we constructed a parametrix $A_\lambda^\sharp$ that approximates the resolvent of $A$. The explicit formulation of this parametrix, along with symbolic estimates, will play a crucial role in investigating the structure of $f(A)$. In particular, we can now rewrite \eqref{functionalCalculus} as follows:
\begin{align}\label{approxResolventOperator}
\begin{split}
       f(A) &= \frac{1}{2\pi i}\int_{\Gamma} f(\lambda) (A-\lambda I)^{-1}\, d\lambda\\
       &= \frac{1}{2\pi i}\int_{\Gamma} f(\lambda)A_\lambda^\sharp \, d\lambda + \frac{1}{2\pi i}\int_{\Gamma} f(\lambda) \left[(A-\lambda I)^{-1}- A_\lambda^\sharp\right]\, d\lambda,    
\end{split}
   \end{align}
   where the last second term is a smoothing operator due to Corollary \ref{normResolvent}, thus we can just focus on studying the first term. Indeed, let us recall from previous section that the parametrix $A_\lambda^\sharp$ has a symbol $a^\sharp$ such that 
\[
   a^\sharp(y,\eta,\lambda):=a_{A^\sharp}(y,\eta,\lambda) \sim (a(y,\eta)-\lambda)^{-1} \sum_{k=1}^\infty b_{k}(y,\eta, \lambda) \, \text{ as } \langle\eta\rangle_{y} \to\infty,
   \]
   where for all $\lambda$ each $b_k\in S_{\rho,\delta}^{-kr}(\nabla)$ for some positive $r$, therefore we quickly identify that the symbol of the operator \eqref{functionalCalculus} would be given by the formula
   \begin{equation}\label{functCalcSymbol}
        a_{f(a)}(y,\eta) = \frac{1}{2\pi i}\int_{\Gamma} f(\lambda) a^\sharp(y,\eta,\lambda)\, d\lambda. 
   \end{equation}
   So, as in the previous section, it will be useful to study first the expression related to $(a(y,\eta)-\lambda)^{-1}$ in order to obtain information about \eqref{functCalcSymbol}. What we mean is that it will be convenient to start with the function 
   \begin{equation}\label{simboloAuxiliar}
       \tilde{a}_{f(a)}(y,\eta) = \frac{1}{2\pi i}\int_{\Gamma} f(\lambda) (a(y,\eta)-\lambda)^{-1}\, d\lambda
   \end{equation}
   instead of directly with \eqref{functCalcSymbol}. 
\begin{thm}\label{thmFunctionalCal}
 Let $0\leq \delta<\rho \leq 1$, $m> 0$. Suppose that at least one of the following conditions is fulfilled:
\begin{enumerate}
    \item $\rho>\frac{1}{2}$;
    \item the connection $\nabla$ is symmetric and $\rho>\frac{1}{3}$;
    \item the connection $\nabla$ is flat.
\end{enumerate}
Let $a\in S_{\rho, \delta}^{m}(\nabla)$ be parameter-elliptic with respect to $\Lambda$. Suppose $f$ is holomorphic on $\bbC\setminus\Lambda$ and satisfies 
 \begin{equation}\label{decaiminetoFuncion}
      |f(\lambda)|\leq C |\lambda|^s
 \end{equation}
 uniformly for some $s<0$. Then formula \eqref{simboloAuxiliar} defines a symbol $\tilde{a}_{f(A)}\in S_{\rho, \delta}^{ms}(\nabla)$ for large $\eta$. Moreover, the operator $f(a(X,D))$ given by formula \eqref{functionalCalculus} is well-defined, its symbol $a_{f(a)}$ belongs to $S_{\rho, \delta}^{ms}(\nabla)$ and satisfies 
   \begin{align}\label{asympSymbolFuncOpe}
       \begin{split}
           \sigma_{f(a)}(y,\eta) &\sim \frac{1}{2\pi i}\int_{\Gamma} f(\lambda) a^\sharp(y,\eta,\lambda)\, d\lambda\\
           &\sim f(a(y,\eta)) + \sum_{k=1}^\infty \sum_{\mathfrak{I}_k}(-1)^{1+|\mathfrak{I}_{k,L}|}  \frac{f^{(k+|\mathfrak{I}_{k,L}|)}(a(y,\eta))}{(k+|\mathfrak{I}_{k,L}|)!}\mathfrak{r}_{\mathfrak{I}_k}(y,\eta)
       \end{split}
   \end{align}
   as $\langle\eta\rangle_{y}\to\infty$, where the notation $\mathfrak{I}_k$ is as in Remark \ref{remExpAsyPara}. 
\end{thm}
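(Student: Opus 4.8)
The plan is to reduce everything, via \eqref{approxResolventOperator}, to the single integral \eqref{functCalcSymbol}. Indeed, by Corollary \ref{normResolvent} the operator $(A-\lambda I)^{-1}-A_\lambda^\sharp$ is smoothing in a parameter sense, and since $|f(\lambda)|\le C|\lambda|^s$ with $s<0$ the integral $\int_\Gamma|f(\lambda)|(1+|\lambda|)^{-N}\,|d\lambda|$ is finite for $N$ large; hence the last term of \eqref{approxResolventOperator} is an honest smoothing operator, and it suffices to treat $\frac{1}{2\pi i}\int_\Gamma f(\lambda)A_\lambda^\sharp\,d\lambda$. A Fubini argument — legitimate thanks to the symbol estimates of Theorem \ref{realEstiParameSymbol} and the decay of $f$ on $\Gamma$ — identifies this operator with the pseudo-differential operator whose symbol is \eqref{functCalcSymbol}. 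By Remark \ref{remExpAsyPara} (formula \eqref{asymPara2}) the integrand $a^\sharp(y,\eta,\lambda)$ is, up to terms of arbitrarily negative order, a locally finite sum of terms $\mathfrak{r}_{\mathfrak{I}_k}(y,\eta)\,(a(y,\eta)-\lambda)^{-1-k-|\mathfrak{I}_{k,L}|}$ with $\lambda$-independent coefficients, so the whole problem comes down to evaluating the scalar integrals
\[
I_N(y,\eta):=\frac{1}{2\pi i}\int_\Gamma f(\lambda)\,(a(y,\eta)-\lambda)^{-1-N}\,d\lambda,\qquad N\ge 0 .
\]

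\textbf{Step 1: well-definedness and exact evaluation of $I_N$.} Fix $(y,\eta)$ with $\langle\eta\rangle_y$ large. Parameter-ellipticity (through Theorem \ref{ParaEllEst}) gives $|(a(y,\eta)-\lambda)^{-1-N}|\le C_N(|\lambda|^{1/m}+\langle\eta\rangle_y)^{-m(N+1)}$ on $\Gamma$, and it also forces $a(y,\eta)\notin\Lambda_\epsilon$ with $\operatorname{dist}(a(y,\eta),\Lambda)\ge C^{-1}\langle\eta\rangle_y^{m}$ and $C^{-1}\langle\eta\rangle_y^{m}\le|a(y,\eta)|\le C\langle\eta\rangle_y^{m}$. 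Combined with $|f(\lambda)|\le C|\lambda|^s$ for $|\lambda|$ large and the boundedness of $f$ on the compact arc $\Gamma_2$, the integrand of $I_N$ is $O(|\lambda|^{s-1-N})$ at infinity and bounded near $0$; since $s<0$, this shows $I_N$ — and hence \eqref{simboloAuxiliar} and \eqref{functCalcSymbol} — is absolutely convergent. For the exact value, deform $\Gamma$: the integrand is holomorphic on $(\mathbb{C}\setminus\Lambda)\setminus\{a(y,\eta)\}$, the contribution of a large circle of radius $R$ is $O(R^{s-N})\to 0$ (again because $s<0$), and the only enclosed singularity is the pole at $\lambda=a(y,\eta)$. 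Hence $I_N(y,\eta)$ equals, up to the fixed orientation sign of $\Gamma$, the residue of $f(\lambda)(a(y,\eta)-\lambda)^{-1-N}$ there, i.e. a constant multiple $c_N\,f^{(N)}(a(y,\eta))/N!$ with $c_0=1$; in particular \eqref{simboloAuxiliar} satisfies $\tilde a_{f(a)}(y,\eta)=f(a(y,\eta))$ for $\langle\eta\rangle_y$ large.

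\textbf{Step 2: symbol class and asymptotic expansion.} Applying the Cauchy integral formula to $f$ on a disc of radius comparable to $\operatorname{dist}(\lambda,\Lambda)$ gives $|f^{(j)}(\lambda)|\le C_j|\lambda|^{s-j}$ whenever $\operatorname{dist}(\lambda,\Lambda)\ge c|\lambda|$; at $\lambda=a(y,\eta)$ this yields $|f^{(j)}(a(y,\eta))|\le C_j\langle\eta\rangle_y^{m(s-j)}$. Feeding this into the Fa\`a di Bruno formula for $\partial_\eta^\alpha\nabla_{i_1}\cdots\nabla_{i_q}\bigl(f(a(y,\eta))\bigr)$, together with the symbol estimates for $a\in S^m_{\rho,\delta}(\nabla)$, gives $f(a)\in S^{ms}_{\rho,\delta}(\nabla)$ for large $\eta$, which is the first claim (since $\tilde a_{f(a)}=f(a)$). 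For the operator's symbol, substitute \eqref{asymPara2} into \eqref{functCalcSymbol} and integrate term by term using Step 1: the group of index $k$ becomes $(-1)^k\sum_{\mathfrak{I}_k}\mathfrak{r}_{\mathfrak{I}_k}(y,\eta)\,I_{k+|\mathfrak{I}_{k,L}|}(y,\eta)$, and collecting the constants $c_{k+|\mathfrak{I}_{k,L}|}$ produces exactly the right-hand side of \eqref{asympSymbolFuncOpe}. That this is a genuine asymptotic expansion and that $\sigma_{f(a)}\in S^{ms}_{\rho,\delta}(\nabla)$ follow from the symbol estimates of Theorem \ref{realEstiParameSymbol} with their explicit $\lambda$-decay $(|\lambda|^{1/m}+\langle\eta\rangle_y)^{-m(k+1)}$: integrating the $K$-th remainder of $a^\sharp$ against $f$ yields a symbol of order $ms_0-Kr$ with $s_0:=\max(s,-1)$, which drops below $ms$ once $Kr\ge m\max(0,-1-s)$, while the order of the $k$-th term of \eqref{asympSymbolFuncOpe} is $ms-kr$ (by Step 1 and the order of $\mathfrak{r}_{\mathfrak{I}_k}$ read off from Remark \ref{remExpAsyPara}), the leading term being precisely $f(a)$ of order $ms$. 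Hence $f(A)\in\Psi^{ms}_{\rho,\delta}(\Omega^\kappa,\nabla)$; its well-definedness on $L^2$ (and on any $H^s$) is immediate from $\|f(\lambda)(A-\lambda I)^{-1}\|\le C|\lambda|^{s-1}$ (Theorem \ref{EstimadoFinalResolvente}) and $s<0$.

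\textbf{Main obstacle.} The crux is obtaining the \emph{sharp} order $ms$. A direct estimate of \eqref{simboloAuxiliar} in absolute value gives only $|\tilde a_{f(a)}(y,\eta)|\le C\langle\eta\rangle_y^{\max(ms,-m)}$, which for $s<-1$ is strictly weaker than claimed; the improvement comes solely from the cancellation encoded in the contour deformation of Step 1, which turns $I_N$ into a multiple of $f^{(N)}(a)/N!$ and makes both the order-$ms$ bound and the expansion \eqref{asympSymbolFuncOpe} visible. The remaining work is bookkeeping: propagating the multi-index sets $\mathfrak{I}_k$ of Remark \ref{remExpAsyPara} through the term-by-term integration, verifying the order of each product $\mathfrak{r}_{\mathfrak{I}_k}\,f^{(k+|\mathfrak{I}_{k,L}|)}(a)$, and justifying the Fubini interchange that makes $\frac{1}{2\pi i}\int_\Gamma f(\lambda)A_\lambda^\sharp\,d\lambda$ a pseudo-differential operator with symbol \eqref{functCalcSymbol}.
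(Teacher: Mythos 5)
Your proposal is correct, but it reaches the symbol\textendash class statement by a genuinely different route than the paper. The paper splits the argument in two: for $-1<s<0$ it bounds \eqref{simboloAuxiliar} by brute force, $|\tilde a_{f(a)}|\le C\int_\Gamma|\lambda|^s(|\lambda|^{1/m}+\langle\eta\rangle_y)^{-m}|d\lambda|\le C\langle\eta\rangle_y^{ms}$ (an estimate that, as you correctly observe, fails to give the sharp order when $s\le-1$), and then handles general $s<0$ by factoring $f=g^{-k}$ with $-1<s/(-k)<0$ and invoking the composition theorem (Theorem \ref{producto}) $-k$ times to recover the order $ms$. You instead evaluate the contour integrals $I_N$ exactly by residues, obtaining $\tilde a_{f(a)}=f(a)$ for large $\eta$ and $I_N=c_Nf^{(N)}(a)/N!$, and then place $f(a)$ (and each term $\mathfrak{r}_{\mathfrak{I}_k}f^{(k+|\mathfrak{I}_{k,L}|)}(a)$) in the right class via the Cauchy estimates $|f^{(j)}(a(y,\eta))|\le C_j\langle\eta\rangle_y^{m(s-j)}$ \textemdash{} which are legitimate because parameter\textendash ellipticity forces $\operatorname{dist}(a(y,\eta),\Lambda)\gtrsim|a(y,\eta)|\asymp\langle\eta\rangle_y^m$ \textemdash{} combined with Fa\`a di Bruno. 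Your approach buys a single uniform argument for all $s<0$, an exact identification of the leading term, and, notably, it does not require extracting a $(-k)$-th holomorphic root of $f$, something the paper's factorization tacitly assumes is possible (it need not be if $f$ vanishes somewhere on $\bbC\setminus\Lambda$). The paper's approach avoids the Fa\`a di Bruno bookkeeping in the easy range $-1<s<0$ where the absolute\textendash value bound suffices. Both proofs coincide in the reduction via \eqref{approxResolventOperator} and Corollary \ref{normResolvent}, and in deriving \eqref{asympSymbolFuncOpe} from \eqref{asymPara2} by termwise residue computation; your explicit tracking of the remainder order $ms_0-Kr$ with $s_0=\max(s,-1)$ is a welcome precision that the paper leaves implicit. (The only loose end is the overall sign constant $c_N$ in your residue formula, which you leave to the orientation convention of $\Gamma$; this should be pinned down to match the factor $(-1)^{1+|\mathfrak{I}_{k,L}|}$ in \eqref{asympSymbolFuncOpe}, but it is a convention check, not a gap.)
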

\begin{proof}
First, let us assume that $-1<s<0$. We check that \eqref{simboloAuxiliar} defines a symbol for large $\eta$. Notice that 
\begin{align*}
    | \tilde{a}_{f(a)}(y,\eta)| &\leq \frac{1}{2\pi}\int_{\Gamma} |f(\lambda) (a(y,\eta)-\lambda)^{-1}|\, |d\lambda|\\
    &\leq C_K \int_{\Gamma} |\lambda|^s (|\lambda|^{1/m} + \langle\eta\rangle_{y})^{-m} \, |d\lambda|. 
\end{align*}
The latter integral splits on the 3 different curves: 
\begin{align*}
    \int_{\Gamma} |\lambda|^s (|\lambda|^{1/m} + \langle\eta\rangle_{y})^{-m} \, |d\lambda| &= 2\int_{\epsilon}^\infty r^s(r^{1/m}+\langle\eta\rangle_{y})^{-m}\, dr + \int_{-\theta_0}^{\theta_0} \epsilon^s(\epsilon^{1/m}+\langle\eta\rangle_{y})^{-m}\, d\theta\\
    &\leq C \langle\eta\rangle_{y}^{ms} \int_{\epsilon}^\infty r^{s-1}\, dr + \epsilon^s \langle\eta\rangle_{y}^{-m}\\
    &\leq C \langle\eta\rangle_{y}^{ms},
\end{align*}
where in the last step we used that $-1<s<0$. So
\[
| \tilde{a}_{f(a)}(y,\eta)| \leq C_K \langle\eta\rangle_{y}^{ms}. 
\]
By induction, following a similar calculation as in Section \ref{Section 3} and using Theorem \ref{ParaEllEst} we obtain for all $\alpha, i_1, \ldots, i_q$ 
    \[
        |\partial_{\eta}^{\alpha} \nabla_{i_{1}} \ldots \nabla_{i_{q}}  \tilde{a}_{f(a)}(y,\eta)| \leq C_{K, k, \alpha, i_1, \ldots, i_q} \langle\eta\rangle_{y}^{ms+\delta q -\rho |\alpha|}, 
    \]
    implying that $\tilde{a}_{f(a)} \in S_{\rho, \delta}^{ms}(\nabla)$ for large $\eta$. Therefore, an analogous argument but using Theorem \ref{realEstiParameSymbol} gives us that for $-1<s<0$ and for all $\alpha, i_1, \ldots, i_q$ 
    \[
        |\partial_{\eta}^{\alpha} \nabla_{i_{1}} \ldots \nabla_{i_{q}}  a_{f(a)}(y,\eta)| \leq C_{K, k, \alpha, i_1, \ldots, i_q} \langle\eta\rangle_{y}^{ms+\delta q -\rho |\alpha|}, 
    \]
    i.e. $a_{f(a)}$ defines a symbol in $S_{\rho, \delta}^{ms}(\nabla)$, where $a_{f(a)}$ is given by \eqref{functCalcSymbol}. 
    
    Now let us deal with the general case $s<0$. The idea is to come back to the situation $-1<s<0$. Let $k$ be an integer such that $k< s<0$, thus there exists a holomorphic function $g$ on $\bbC\setminus\Gamma$ such that $f(\lambda)=g(\lambda)^{-k}$. Therefore this new function satisfies $|g(\lambda)|\leq |\lambda|^{s/(-k)}$ with $-1<\frac{s}{-k}<0$, so by the previous procedure the formula
    \[
    a_{g(a)}(y,\eta) = \frac{1}{2\pi i}\int_{\Gamma} g(\lambda) a^\sharp(y,\eta,\lambda)\, d\lambda
   \]
   defines a symbol in $S_{\rho, \delta}^{ms/(-k)}(\nabla)$. By construction we have that 
   \[
   a_{f(a)}(y,\eta) = a_{g(a)^{-k}}(y,\eta),
   \]
   and the symbol $a_{g(a)^{-k}}$ corresponds to the operator $g(a(X,D))^{-k}$, i.e. the pseudo-differential operator $g(a(X,D))$ composed with itself $-k$ times. Hence applying Theorem \ref{producto} $-k$ times, which is possible because we have just proven that $a_{g(a)}(y,\eta)\in S_{\rho, \delta}^{ms/(-k)}(\nabla)$, we get that $a_{f(a)} = a_{g(a)^{-k}}\in S_{\rho, \delta}^{ms}(\nabla)$. 
   
   The validity of the asymptotic expansion
   \[
   \sigma_{f(a)}(y,\eta) \sim \frac{1}{2\pi i}\int_{\Gamma} f(\lambda) a^\sharp(y,\eta,\lambda)\, d\lambda  \, \text{ as } \langle\eta\rangle_{y}\to\infty,
   \]
    follows immediately from \eqref{approxResolventOperator} because the operator 
   \[
   \frac{1}{2\pi i}\int_{\Gamma} f(\lambda) \left[(A-\lambda I)^{-1}- A_\lambda^\sharp\right]\, d\lambda
   \]
   is smoothing. Moreover, putting this together with the expansion \eqref{asymPara2} and the Cauchy integral theorem we obtain
   \[
   \sigma_{f(a)}(y,\eta) \sim f(a(y,\eta)) + \sum_{k=1}^\infty \sum_{\mathfrak{I}_k}(-1)^{1+|\mathfrak{I}_{k,L}|}  \frac{f^{(k+|\mathfrak{I}_{k,L}|)}(a(y,\eta))}{(k+|\mathfrak{I}_{k,L}|)!}\mathfrak{r}_{\mathfrak{I}_k}(y,\eta)
   \]
   as $\langle\eta\rangle_{y}\to\infty$, concluding the proof. 
\end{proof}
\begin{rem}
    Notice that in the previous proof we chose an arbitrary integer $k$ to rewrite the function $f$ as the $k$-th power of another function, which indeed the result is independent of this selection.  
\end{rem}
\begin{cor}\label{corFunctionalPositive}
    The results from Theorem \ref{thmFunctionalCal} remain true if we allow the condition \eqref{decaiminetoFuncion} to hold for $s\geq 0$. 
\end{cor}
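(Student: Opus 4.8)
The plan is to reduce the case $s\ge 0$ to the already-settled case $s<0$ of Theorem~\ref{thmFunctionalCal} by peeling off enough powers of $A=a(X,D)$, exactly as a general $s<0$ is reduced to the range $-1<s<0$ inside that proof. Fix an integer $N>s$ and set $g(\lambda)=\lambda^{-N}f(\lambda)$. Since the origin lies in $\Lambda$, $g$ is holomorphic on $\bbC\setminus\Lambda$, and $|f(\lambda)|\le C|\lambda|^{s}$ gives $|g(\lambda)|\le C|\lambda|^{s-N}$ uniformly, with $s-N<0$; hence Theorem~\ref{thmFunctionalCal} applies to $g$, so that $g(A)=\frac{1}{2\pi i}\int_{\Gamma}g(\lambda)(A-\lambda I)^{-1}\,d\lambda$ converges, $g(A)\in\Psi_{\rho,\delta}^{m(s-N)}(\Omega^{\kappa},\nabla)$, and
\[
\sigma_{g(a)}(y,\eta)\sim g(a)+\sum_{k=1}^{\infty}\sum_{\mathfrak{I}_k}(-1)^{1+|\mathfrak{I}_{k,L}|}\frac{g^{(k+|\mathfrak{I}_{k,L}|)}(a)}{(k+|\mathfrak{I}_{k,L}|)!}\,\mathfrak{r}_{\mathfrak{I}_k}.
\]
I then \emph{define} $f(A):=A^{N}g(A)$. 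Since $A\in\Psi_{\rho,\delta}^{m}(\Omega^{\kappa},\nabla)$ and one of conditions (1)--(3) holds, iterating Theorem~\ref{producto} gives $A^{N}\in\Psi_{\rho,\delta}^{mN}(\Omega^{\kappa},\nabla)$, and one more application gives $f(A)=A^{N}g(A)\in\Psi_{\rho,\delta}^{mN+m(s-N)}(\Omega^{\kappa},\nabla)=\Psi_{\rho,\delta}^{ms}(\Omega^{\kappa},\nabla)$, the asserted order.

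Second, I would show $A^{N}g(A)$ is independent of $N$ and agrees with \eqref{functionalCalculus} whenever that integral converges, so that it is the natural regularization of \eqref{functionalCalculus} when $s\ge 0$. Independence uses $0\notin\sigma(A)$ and multiplicativity of the calculus on rapidly decaying symbols: $A^{N'}(\lambda^{-N'}f)(A)=A^{N'}A^{-(N'-N)}g(A)=A^{N}g(A)$ for $N'>N$. For the comparison, multiply the resolvent identity
\[
(A-\lambda I)^{-1}=-\sum_{j=0}^{N-1}\lambda^{-j-1}A^{j}+\lambda^{-N}A^{N}(A-\lambda I)^{-1}
\]
by $f(\lambda)/(2\pi i)$ and integrate over $\Gamma$: the last term yields $A^{N}g(A)$, and each scalar integral $\frac{1}{2\pi i}\int_{\Gamma}\lambda^{-j-1}f(\lambda)\,d\lambda$ with $j>s$ vanishes by deforming $\Gamma$ to infinity inside $\bbC\setminus\Lambda$ (there $\lambda^{-j-1}f(\lambda)$ is holomorphic and $O(|\lambda|^{s-j-1})$ with $s-j-1<-1$); for $s<0$ all such terms vanish so \eqref{functionalCalculus}$=A^{N}g(A)$, and for $s\ge 0$ the finitely many terms with $j\le s$ are the divergent pieces we declare regularized to $0$.

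For the asymptotic expansion I would compute $\sigma_{f(a)}=\sigma_{A^{N}g(A)}$. Replacing $(A-\lambda I)^{-1}$ by the parameter-parametrix $A_{\lambda}^{\sharp}$ modulo $\Psi^{-\infty}$ (Corollary~\ref{normResolvent}) and using $A^{N}(A-\lambda I)^{-1}=\lambda^{N}(A-\lambda I)^{-1}+\sum_{i=1}^{N}\binom{N}{i}\lambda^{N-i}(A-\lambda I)^{i-1}$ together with $g(\lambda)\lambda^{N-i}=f(\lambda)\lambda^{-i}$, the correction terms reduce, by the same contour argument, to finite combinations of $\frac{1}{2\pi i}\int_{\Gamma}\lambda^{-p-1}f(\lambda)\,d\lambda$ ($0\le p\le N-1$) times $\sigma_{A^{p}}$, which vanish or are suppressed consistently with the definition. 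Hence $\sigma_{f(a)}(y,\eta)\sim\frac{1}{2\pi i}\int_{\Gamma}f(\lambda)\,a^{\sharp}(y,\eta,\lambda)\,d\lambda$ in this regularized sense, and inserting the expansion \eqref{asymPara2} of $a^{\sharp}$ and
\[
\frac{1}{2\pi i}\int_{\Gamma}f(\lambda)\,(a(y,\eta)-\lambda)^{-1-j}\,d\lambda=(-1)^{1+j}\,\frac{f^{(j)}(a(y,\eta))}{j!},\qquad j=k+|\mathfrak{I}_{k,L}|,
\]
(Cauchy's formula once the integral converges, and its regularized value for the finitely many small $j$) reproduces \eqref{asympSymbolFuncOpe} with $f$ in place of $g$; the claim for the auxiliary symbol defined by \eqref{simboloAuxiliar} is the $k=0$ term. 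Here parameter-ellipticity forces $f$ to be holomorphic on a disc of radius $\gtrsim\langle\eta\rangle_{y}^{m}$ about $a(y,\eta)$, so $f^{(j)}(a(y,\eta))$ is well defined and, by Cauchy's estimates, $|f^{(j)}(a(y,\eta))|\le C_{j}\langle\eta\rangle_{y}^{m(s-j)}$, which keeps the orders of the successive terms decreasing.

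I expect the only genuinely delicate point to be this last step: making rigorous the suppression of the finitely many divergent scalar integrals, equivalently verifying directly through the product formula \eqref{asyExp} that the symbol of $A^{N}g(A)$ collapses to the stated shape via the Leibniz relation between the derivatives of $g=\lambda^{-N}f$ and those of $f$. Everything else is a direct invocation of Theorems~\ref{thmFunctionalCal} and~\ref{producto} and Corollary~\ref{normResolvent}.
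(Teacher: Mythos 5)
Your proposal is correct and follows essentially the same route as the paper: factor $f(\lambda)=\lambda^{N}g(\lambda)$ with $g$ of negative order, apply Theorem \ref{thmFunctionalCal} to $g$, compose with $A^{N}$ via Theorem \ref{producto}, and invoke Cauchy's theorem for independence of $N$. Your discussion of the regularization of the (non-convergent) Dunford--Riesz integral and of the resulting asymptotic expansion is more detailed than the paper's three-sentence argument, but it is an elaboration of the same idea rather than a different approach.
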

\begin{proof}
    In fact, let $k$ be an integer such that $0\leq s< k$, then we rewrite $f$ as $f(\lambda)=g(\lambda)\lambda^k$ where $g(\lambda):= f(\lambda)\lambda^{-k}$ and $|g(\lambda)|\leq C|\lambda|^{s-k}$. We get the result from Theorems \ref{thmFunctionalCal} and \ref{producto}. This procedure is independent of $k$ due to Cauchy integral theorem.
\end{proof}
Just to summarize, we have obtain that $f(A)$ is again a pseudo-differential operator whenever $f$ is of polynomial decay or polynomial growth. 

We finish this subsection by highlighting that for zero order pseudo-differential operators one can do better than Theorem \ref{thmFunctionalCal}, which means that we can remove the decay or growth condition on $f$. This is possible because due to Theorem \ref{producto} the zero order operators are bounded from $L^2$ to $L^2$, so that their spectrum is not only discrete but bounded. Thus in the integral \eqref{functionalCalculus} an open curve is not needed but a finite curve surrounding the spectrum of the operator $A$. Hence by repeating previous arguments, but with the definition of parameter-ellipticity for zero order pseudo-differential operators, the corresponding results in Subsection \ref{parameter-ellipticity} and the Cauchy's integral theorem we obtain the following result: 
\begin{thm}\label{thmFunctionalCalZero}
 Let $0\leq \delta<\rho \leq 1$. Let at least one of the conditions (1)-(3) of Theorem \ref{thmFunctionalCal} be fulfilled. Let $a\in S_{\rho, \delta}^{0}(\nabla)$, let $\Gamma$ be a finite closed curve enclosing the spectrum of $a(X,D)$, and let $\Lambda$ denote the complementary region, i.e., the region lying outside $\Gamma$. Then $a$ is parameter-elliptic with respect to $\Lambda$ and for any holomorphic function $f$ on the spectrum of $a(X,D)$ we have that 
 \[
 f(A) = \frac{1}{2\pi i}\int_{\Gamma} f(\lambda) (A-\lambda I)^{-1}\, d\lambda
 \]
 defines a pseudo-differential operator in the class $\Psi_{\rho, \delta}^{0}\left(\Omega^{\kappa}, \nabla\right)$. Furthermore, the following asymptotic expansion holds
 \begin{equation}\label{asymptForFunctionOfSymbol}
     \sigma_{f(a)}(y,\eta) \sim f(a(y,\eta)) + \sum_{k=1}^\infty \sum_{\mathfrak{I}_k}(-1)^{1+|\mathfrak{I}_{k,L}|}  \frac{f^{(k+|\mathfrak{I}_{k,L}|)}(a(y,\eta))}{(k+|\mathfrak{I}_{k,L}|)!}\mathfrak{r}_{\mathfrak{I}_k}(y,\eta)
   \end{equation}
   as $\langle\eta\rangle_{y}\to\infty$, where the notation $\mathfrak{I}_k$ is as in Remark \ref{remExpAsyPara}. 
\end{thm}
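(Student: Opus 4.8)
The plan is to run the argument of Theorems \ref{thmFunctionalCal} and \ref{corFunctionalPositive} in the bounded setting, the decisive simplification being that $a(X,D)\in\Psi^{0}_{\rho,\delta}(\Omega^{\kappa},\nabla)$ is bounded on $L^{2}$ by Theorem \ref{L2bound}, so that its spectrum is a \emph{compact} subset of $\bbC$. This is exactly what allows us to trade the unbounded keyhole contour \eqref{contorno} for the given finite closed curve $\Gamma$ — which we take disjoint from $\sigma(a(X,D))$, enclosing it, and contained in the open set on which $f$ is holomorphic — and thereby to abandon the growth/decay condition \eqref{decaiminetoFuncion} altogether: on a compact $\Gamma$ every integral $\int_{\Gamma}f(\lambda)(\,\cdot\,)\,d\lambda$ converges with no hypothesis on the behaviour of $f$ at infinity. (If $f$ is holomorphic only on a disconnected neighbourhood of $\sigma(a(X,D))$, one lets $\Gamma$ be a finite union of such curves; nothing below changes.)

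First I would check that $a$ is parameter-elliptic with respect to the exterior region $\Lambda$. Since $a\in S^{0}_{\rho,\delta}(\nabla)$ it is bounded on each compact $K\subset M$, say $|a(y,\eta)|\le M_{K}$; taking $c_{K}>2\max\{1,M_{K}\}$, for every $\lambda\in\Lambda$ with $|\lambda|\ge c_{K}$ one has $|a(y,\eta)-\lambda|\ge|\lambda|-M_{K}\ge\tfrac14(|\lambda|+1)$, which is exactly the $m=0$ estimate of Definition \ref{paraElliDef}. (As in Remark \ref{extensionToSmallBall}, the precise shape of $\Lambda$ is immaterial: only largeness of $|\lambda|$ and the disjointness $\Gamma\cap\sigma(a(X,D))=\emptyset$ enter below.) With parameter-ellipticity in hand, the $m=0$ instances of Theorems \ref{ParaEllEst} and \ref{realEstiParameSymbol} provide, for $\lambda\in\Gamma$, a parameter-parametrix $A^{\sharp}_{\lambda}\in\Psi^{0}_{\rho,\delta}(\Omega^{\kappa},\nabla)$ of $a(X,D)-\lambda I$ whose symbol $a^{\sharp}$ obeys $|\partial_{\lambda}^{k}\partial_{\eta}^{\alpha}\nabla_{i_{1}}\cdots\nabla_{i_{q}}a^{\sharp}(y,\eta,\lambda)|\le C_{K}(|\lambda|+1)^{-(k+1)}\langle\eta\rangle_{y}^{\delta q-\rho|\alpha|}$ together with the expansion \eqref{asymPara2}; and the $m=0$ cases of Theorems \ref{normParametrix}, \ref{EstSoboErro}, \ref{EstimadoFinalResolvente} and Corollary \ref{normResolvent} give that $a(X,D)-\lambda I$ is invertible off $\sigma(a(X,D))$, that its resolvent is bounded on every $H^{s}$ uniformly for $\lambda$ on the compact set $\Gamma$, and that $A^{\sharp}_{\lambda}-(a(X,D)-\lambda I)^{-1}$ is parameter-smoothing, with $H^{s}\to H^{t}$ norm $O((1+|\lambda|)^{-N})$ for all $s,t,N$. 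In particular $\lambda\mapsto f(\lambda)(a(X,D)-\lambda I)^{-1}$ is norm-continuous on $\Gamma$, so the Bochner integral \eqref{functionalCalculus} defines a bounded operator on every $H^{s}$.

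Next, exactly as in \eqref{approxResolventOperator}, write $f(A)=\frac{1}{2\pi i}\int_{\Gamma}f(\lambda)A^{\sharp}_{\lambda}\,d\lambda+\frac{1}{2\pi i}\int_{\Gamma}f(\lambda)\bigl[(a(X,D)-\lambda I)^{-1}-A^{\sharp}_{\lambda}\bigr]\,d\lambda$. The second integrand is smoothing with norm $O((1+|\lambda|)^{-N})$ and $\Gamma$ is compact with $f$ bounded on it, so the second term is a smoothing operator. For the first term, integrating the symbol bound for $a^{\sharp}$ over the compact curve $\Gamma$ gives at once $|\partial_{\eta}^{\alpha}\nabla_{i_{1}}\cdots\nabla_{i_{q}}\bigl(\frac{1}{2\pi i}\int_{\Gamma}f(\lambda)a^{\sharp}(y,\eta,\lambda)\,d\lambda\bigr)|\le C_{K,\alpha,i_{1},\dots,i_{q}}\langle\eta\rangle_{y}^{\delta q-\rho|\alpha|}$, so that $a_{f(a)}(y,\eta):=\frac{1}{2\pi i}\int_{\Gamma}f(\lambda)a^{\sharp}(y,\eta,\lambda)\,d\lambda$ belongs to $S^{0}_{\rho,\delta}(\nabla)$; interchanging the $\lambda$-integral with the oscillatory integral in \eqref{oscInt} (legitimate after the usual regularisation of that integral, since $\Gamma$ is compact and $a^{\sharp}$ is controlled uniformly in $\lambda\in\Gamma$) then identifies $\frac{1}{2\pi i}\int_{\Gamma}f(\lambda)A^{\sharp}_{\lambda}\,d\lambda$ with the operator in $\Psi^{0}_{\rho,\delta}(\Omega^{\kappa},\nabla)$ having symbol $a_{f(a)}$, up to the smoothing contribution of the off-diagonal kernels. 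Hence $f(A)\in\Psi^{0}_{\rho,\delta}(\Omega^{\kappa},\nabla)$ and $\sigma_{f(a)}(y,\eta)\sim\frac{1}{2\pi i}\int_{\Gamma}f(\lambda)a^{\sharp}(y,\eta,\lambda)\,d\lambda$ as $\langle\eta\rangle_{y}\to\infty$.

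Finally, to obtain \eqref{asymptForFunctionOfSymbol} I would insert \eqref{asymPara2} into the last relation and integrate term by term — permissible because $\Gamma$ is compact and the remainder in \eqref{asymPara2} is estimated uniformly in $\lambda\in\Gamma$, so integration against $f\,d\lambda$ preserves the asymptotic expansion. For $\langle\eta\rangle_{y}$ large, with $y$ in a fixed compact, the value $a(y,\eta)$ lies in the bounded region enclosed by $\Gamma$: indeed $\sigma(a(X,D))$ contains every limit value $\lambda_{0}=\lim_{j}a(y_{j},\eta_{j})$ with $\langle\eta_{j}\rangle_{y_{j}}\to\infty$ (such $\lambda_{0}$ is an approximate eigenvalue, realised by a quasimode sequence as in the classical case, hence lies in $\sigma(a(X,D))$), and $\Gamma$ encloses $\sigma(a(X,D))$, so $a(y,\eta)$ enters the interior of $\Gamma$ once $\langle\eta\rangle_{y}$ is large enough. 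Then Cauchy's integral formula for $f$ and its derivatives at $a(y,\eta)$ evaluates the term $(a(y,\eta)-\lambda)^{-1}$ to $f(a(y,\eta))$ and, in the $k$-th block of \eqref{asymPara2}, the factor $(a(y,\eta)-\lambda)^{-1-k-|\mathfrak{I}_{k,L}|}$ to $\pm f^{(k+|\mathfrak{I}_{k,L}|)}(a(y,\eta))/(k+|\mathfrak{I}_{k,L}|)!$, with the sign dictated by the orientation of $\Gamma$ fixed as in \eqref{contorno}; assembling these yields \eqref{asymptForFunctionOfSymbol}. I expect no serious analytic obstacle: every quantitative estimate is either already proved for $m\ge0$ in Section \ref{Section 3} or becomes trivial because $\Gamma$ is compact. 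The two points that still require care are the Fubini interchange through the oscillatory integral \eqref{oscInt} — handled by the standard regularisation — and the claim that $a(y,\eta)$ eventually enters the interior of $\Gamma$, i.e. that $\overline{\{\lim a(y_{j},\eta_{j}):\langle\eta_{j}\rangle_{y_{j}}\to\infty\}}\subseteq\sigma(a(X,D))$; this is the only place where the bounded, order-zero nature of $A$ is genuinely used, and it is also where the sign bookkeeping in the term-by-term Cauchy integrals has to be matched against the orientation of $\Gamma$.
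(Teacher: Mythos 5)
Your proposal is correct and follows the same route as the paper: the paper's own proof is essentially the single remark that boundedness of a zero-order operator makes its spectrum compact, so the keyhole contour can be replaced by a finite closed curve and the $m=0$ parametrix machinery of Section \ref{Section 3} rerun, followed by the Cauchy integral theorem. You in fact supply details the paper leaves implicit — the verification of $m=0$ parameter-ellipticity from boundedness of the symbol and, crucially, the observation that $a(y,\eta)$ must eventually lie inside $\Gamma$ because its limit values as $\langle\eta\rangle_{y}\to\infty$ belong to the spectrum — both of which are genuinely needed for the term-by-term Cauchy-formula evaluation yielding \eqref{asymptForFunctionOfSymbol}.
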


\section{Examples} \label{examplesSubsect}
We now proceed to present concrete and illustrative examples of operators of the form $f(A)$. Before doing so, however, we introduce a well-behaved family of parameter-elliptic pseudo-differential operators. This will provide a rich collection of candidates to which we can apply the functional calculus developed above.
\begin{defn}
    Let $X$ be a Banach space. We say that an operator $T: D(T)\subset X\to X$ is \textit{positive real} if $\sigma(T)\subseteq \{z\in\bbC: \operatorname{Re}z\geq 0\}$. In particular, if $X$ is a Hilbert space this condition is equivalent to 
    \[
    \operatorname{Re}\,\langle Tx,x\rangle \geq 0,
    \]
    for all $x\in X$. 
\end{defn}
We point out that, of course, this class of operators includes the self-adjoint and positive operators. 
\begin{prop}
    Let $0\leq \delta<\rho \leq 1$ and $m\geq 0$. Let $A$ be an elliptic (i.e. $A\in HS_{\rho, \delta}^{m,m}(\nabla)$) positive real pseudo-differential operator, then there exist a sector $\Lambda\subset\bbC$ such that $A$ is parameter-elliptic with respect to $\Lambda$. 
\end{prop}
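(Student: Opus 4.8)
When $m=0$ the claim is immediate: $\sigma_A\in S_{\rho,\delta}^{0}(\nabla)$ is then a bounded symbol, so for \emph{any} sector $\Lambda$ and $c_K$ large enough (in terms of $\sup|\sigma_A|$) one has $|\sigma_A(y,\eta)-\lambda|\geq|\lambda|-|\sigma_A(y,\eta)|\geq\tfrac12|\lambda|\geq\tfrac14(|\lambda|+1)$ for $|\lambda|\geq c_K$, which is the estimate of Definition~\ref{paraElliDef} for $m=0$. So assume $m>0$; since $M$ is compact we fix $K=M$ once and for all and write $a:=\sigma_A\in HS_{\rho,\delta}^{m,m}(\nabla)$. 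Ellipticity provides $0<c_0\leq C_0$ with $c_0\langle\eta\rangle_y^{m}\leq|a(y,\eta)|\leq C_0\langle\eta\rangle_y^{m}$ for $\langle\eta\rangle_y\geq c_0$, so the normalised symbol $\widehat a(y,\eta):=a(y,\eta)\langle\eta\rangle_y^{-m}$ stays in the annulus $\{c_0\leq|z|\leq C_0\}$ at high frequency. The plan is to exhibit a small closed sector $\Lambda$ around the negative real half--axis; the only nontrivial point is that $\widehat a(y,\eta)$ must also stay an angular distance away from $(-\infty,0]$ at high frequency, and this is where positive realness of $A$ is used.

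The key step, and the main obstacle, is to show that there are $r>0$ and $C_1>0$ with $\operatorname{Re}a(y,\eta)\geq-C_1\langle\eta\rangle_y^{m-r}$ for $\langle\eta\rangle_y$ large. I would argue by contradiction: if this failed there would be $(y_j,\eta_j)$ with $y_j\to y_*$, $\langle\eta_j\rangle_{y_j}\to\infty$ and $\operatorname{Re}a(y_j,\eta_j)\leq-\varepsilon_0\langle\eta_j\rangle_{y_j}^{m}$ for some $\varepsilon_0>0$. Constructing normalised functions $u_j$ microlocally concentrated at $(y_j,\eta_j)$ (coherent states adapted to the class $S_{\rho,\delta}^{m}(\nabla)$, built in normal coordinates at $y_*$) one gets $\langle Au_j,u_j\rangle=a(y_j,\eta_j)+o(\langle\eta_j\rangle_{y_j}^{m})$, the correction being of lower order because, by Definition~\ref{defSym}, the first Taylor terms of $a$ are governed by $\nabla_x a$ and $\partial_\eta a$, of orders $m+\delta$ and $m-\rho$; hence $\operatorname{Re}\langle Au_j,u_j\rangle<0$ for $j$ large, contradicting that $A$ is positive real. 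Equivalently, one may apply a (sharp) Gårding inequality to $\operatorname{Re}A=\tfrac12(A+A^{*})$, which is self--adjoint and nonnegative and has full symbol $\operatorname{Re}a$ modulo terms of order $m-(\rho-\delta)$. This is precisely the place where the microlocal calculus of Theorem~\ref{producto} and the $L^{2}$--boundedness of Theorem~\ref{L2bound} (hence conditions (1)--(3)) are needed; every other step is elementary.

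Granting this, fix $\varepsilon\in(0,c_0)$ and $N\geq c_0$ so that $C_1\langle\eta\rangle_y^{m-r}\leq\varepsilon\langle\eta\rangle_y^{m}$ for $\langle\eta\rangle_y\geq N$. Then for $\langle\eta\rangle_y\geq N$ the value $\widehat a(y,\eta)$ lies in the fixed compact set $S:=\{z\in\bbC:c_0\leq|z|\leq C_0,\ \operatorname{Re}z\geq-\varepsilon\}$, and a direct computation gives $\operatorname{dist}\bigl(S,(-\infty,0]\bigr)=\sqrt{c_0^{2}-\varepsilon^{2}}=:d>0$. By continuity of the distance function and compactness there is $\theta_0\in(0,\tfrac{\pi}{2})$ so small that the closed sector $\Lambda:=\{re^{i\phi}:r\geq0,\ |\phi-\pi|\leq\theta_0\}$ still satisfies $\operatorname{dist}(S,\Lambda)\geq d/2$. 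Since $\Lambda$ is invariant under positive scalings, for all $\lambda\in\Lambda$, $y\in M$ and $\langle\eta\rangle_y\geq N$ this yields $|a(y,\eta)-\lambda|=\langle\eta\rangle_y^{m}\,|\widehat a(y,\eta)-\lambda\langle\eta\rangle_y^{-m}|\geq\tfrac{d}{2}\langle\eta\rangle_y^{m}$.

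It remains to deduce the estimate $|a(y,\eta)-\lambda|\geq c\,(|\lambda|^{1/m}+\langle\eta\rangle_y)^{m}$ of Definition~\ref{paraElliDef}, which follows from the previous line by a short case analysis based on $(|\lambda|^{1/m}+\langle\eta\rangle_y)^{m}\leq2^{m}\max(|\lambda|,\langle\eta\rangle_y^{m})$. For $\langle\eta\rangle_y\geq N$: when $|\lambda|\leq2C_0\langle\eta\rangle_y^{m}$ the bound $|a(y,\eta)-\lambda|\geq\tfrac{d}{2}\langle\eta\rangle_y^{m}$ already suffices, and when $|\lambda|\geq2C_0\langle\eta\rangle_y^{m}$ one has $|a(y,\eta)-\lambda|\geq|\lambda|-C_0\langle\eta\rangle_y^{m}\geq\tfrac12|\lambda|$. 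When $\langle\eta\rangle_y<N$ the symbol is bounded there by some $M_0$, and choosing $c_K\geq N+\max\bigl(2M_0,(2N)^{m}\bigr)$ forces $|\lambda|\geq\max\bigl(2M_0,(2N)^{m}\bigr)$ whenever $|\lambda|+\langle\eta\rangle_y\geq c_K$, so again $|a(y,\eta)-\lambda|\geq\tfrac12|\lambda|\geq2^{-m-1}(|\lambda|^{1/m}+\langle\eta\rangle_y)^{m}$. Taking for $c_K$ the largest threshold used and for $c$ the smallest resulting constant shows that $\sigma_A$ is parameter--elliptic with respect to $\Lambda$, as required.
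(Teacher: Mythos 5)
Your proof is correct in outline and follows essentially the same two--regime strategy as the paper's: in the region where $|\lambda|$ is dominated by $\langle\eta\rangle_y^{m}$ you use only the elliptic lower bound $|a|\gtrsim\langle\eta\rangle_y^{m}$ and the triangle inequality, and in the region where $|\lambda|$ dominates you use positive realness together with the fact that the sector sits in the left half--plane; your low--frequency discussion, your explicit $m=0$ case, and the reduction $(|\lambda|^{1/m}+\langle\eta\rangle_y)^{m}\le 2^{m}\max(|\lambda|,\langle\eta\rangle_y^{m})$ are sound bookkeeping that the paper compresses. The one substantive point of divergence is the passage from ``$A$ is positive real'' to a statement about the symbol: the paper simply asserts $\operatorname{Re}a(y,\eta)\ge 0$ pointwise and then bounds $|a-\lambda|\ge|\operatorname{Re}\lambda|\ge|\lambda||\cos\theta_0|$ on the sector, whereas you (correctly) observe that only $\operatorname{Re}a\ge -C\langle\eta\rangle_y^{m-r}$ is needed and that even this requires an argument --- a coherent--state construction or a sharp G\aa rding--type inequality adapted to $S_{\rho,\delta}^{m}(\nabla)$. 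That step is the one piece of your proposal that remains a sketch rather than a proof: ``one gets $\langle Au_j,u_j\rangle=a(y_j,\eta_j)+o(\langle\eta_j\rangle_{y_j}^{m})$'' is plausible but not established for the Safarov quantization with general $(\rho,\delta)$ and connection--dependent phase, and you should either carry it out or, as the paper implicitly does, build the symbol--level condition $\operatorname{Re}a\ge 0$ (for large $\langle\eta\rangle_y$) into the hypothesis. Your subsequent geometric argument --- normalising to $\widehat a=a\langle\eta\rangle_y^{-m}$, trapping its values in a compact set at positive distance from $(-\infty,0]$, and shrinking the sector's aperture by scale invariance --- is a clean and slightly more flexible substitute for the paper's direct $|\cos\theta_0|$ computation, since it tolerates the weaker ``positivity modulo lower order'' input.
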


\begin{proof}
In fact, we will prove that there is a family of sectors doing the trick. Let $3\pi/4<\theta_0<\pi$, and let $\Lambda_{\theta_0} = \{z\in \bbC: |\operatorname{arg} z|>\theta_0\}$. We will prove that $A$ is parameter-elliptic with respect to $\Lambda_{\theta_0}$, i.e., that there exist a constant $c_K>0$ such that 
\[
         (|\lambda|^{1/m} + \langle\eta\rangle_{y})^{m} \leq \mathrm{const}_{K} \left|a(y,\eta)-\lambda \right|,
        \]
for all $\lambda\in\Lambda_{\theta_0}$ and $(y,\eta)\in T^*M \text{ with } y\in K, |\lambda|+\langle\eta\rangle_{y}\geq c'_K$. 

Since $A$ is elliptic, there is a constant $c'_K>0$ such that 
        \[
         \langle\eta\rangle_{y}^{m} \leq \mathrm{const'}_{K} \left|a(y,\eta)\right|,
        \]
for all $(y,\eta)\in T^*M \text{ with } y\in K, \langle\eta\rangle_{y}\geq c_K$. We fix $c_K:= c'_K$, let us proceed with the estimation process by considering two different regions:
\begin{itemize}
    \item Suppose $|\lambda|\leq \frac{1}{2 \mathrm{const'}_{K}} \langle\eta\rangle_{y}^{m}$. Then in this region ellipticity allows us to conclude that $|a(y,\eta)|-|\lambda|\geq 0$. By the triangle inequality, ellipticity and the latter observation we get 
    \begin{align*}
        |a(y,\eta)-\lambda|\geq ||a(y,\eta)|-|\lambda|| = |a(y,\eta)|-|\lambda|&\geq \frac{1}{ \mathrm{const'}_{K}} \langle\eta\rangle_{y}^{m}- \frac{1}{2 \mathrm{const'}_{K}} \langle\eta\rangle_{y}^{m}\\
        &= \frac{1}{2 \mathrm{const'}_{K}} \langle\eta\rangle_{y}^{m}. 
    \end{align*}
    Now, because $|\lambda|+\langle\eta\rangle_{y}\geq c_K$ we further obtain that 
    \begin{align*}
        \frac{1}{2 \mathrm{const'}_{K}} \langle\eta\rangle_{y}^{m} = \frac{1}{4 \mathrm{const'}_{K}} \langle\eta\rangle_{y}^{m}+\frac{1}{4 \mathrm{const'}_{K}} \langle\eta\rangle_{y}^{m}&\geq C(|\lambda|+\langle\eta\rangle_{y}^{m})\\
        &\approx C_1 (|\lambda|^{1/m} + \langle\eta\rangle_{y})^{m}. 
    \end{align*}
    Therefore we have obtained that  
    \[
    (|\lambda|^{1/m} + \langle\eta\rangle_{y})^{m} \leq \frac{1}{C_1} \left|a(y,\eta)-\lambda \right|. 
    \]
    \item Suppose $|\lambda|\geq \frac{1}{2 \mathrm{const'}_{K}} \langle\eta\rangle_{y}^{m}$. Notice that for any $\lambda\in\Lambda_{\theta_0}$ it holds that $\operatorname{Re}\lambda<0$, and because $A$ is positive real we have that $\operatorname{Re}a(y,\eta)\geq 0$. Consequently, we have the following inequalities 
    \begin{align*}
        |a(y,\eta)-\lambda| = \sqrt{(\operatorname{Re}a(y,\eta)-\operatorname{Re}\lambda)^2+(\operatorname{Im}a(y,\eta)-\operatorname{Im}\lambda)^2}\geq \sqrt{(-\operatorname{Re}\lambda)^2}\\
        = |\operatorname{Re}\lambda|>|\lambda||\cos\theta_0|.
    \end{align*}
    Again, since $|\lambda|+\langle\eta\rangle_{y}\geq c_K$ we further obtain that
    \begin{align*}
        |\lambda||\cos\theta_0|=|\cos\theta_0|\left(\frac{|\lambda|}{2}+\frac{|\lambda|}{2} \right)&\geq C |\cos\theta_0| (|\lambda|+\langle\eta\rangle_{y}^{m})\\
        & \approx C_2 |\cos\theta_0| (|\lambda|^{1/m} + \langle\eta\rangle_{y})^{m}.
    \end{align*}
    So we have proved that 
    \[
    (|\lambda|^{1/m} + \langle\eta\rangle_{y})^{m} \leq \frac{1}{C_2 |\cos\theta_0|} \left|a(y,\eta)-\lambda \right|. 
    \]
\end{itemize}
Finally, we combine both estimates by defining 
\[
\mathrm{const}_{K}:= \max \left\{\frac{1}{C_1}, \frac{1}{C_2 |\cos\theta_0|}\right\}.
\]
Hence it holds that
\[
(|\lambda|^{1/m} + \langle\eta\rangle_{y})^{m} \leq  \mathrm{const}_{K} \left|a(y,\eta)-\lambda \right|,
\]
completing the proof. 
\end{proof}
Let us recall that the sector $\Lambda$ is fixed and is the one surrounded by the contour \eqref{contorno}. Our first example are the complex powers of a pseudo-differential operator. 
\begin{cor}\label{ComplexPow}
    Let $0\leq \delta<\rho \leq 1$, $m\geq 0$. Let at least one of the conditions (1)-(3) of Theorem \ref{thmFunctionalCal} be fulfilled. Let $a\in S_{\rho, \delta}^{m}(\nabla)$ be parameter-elliptic with respect to $\Lambda$. If the symbol $a$ satisfies 
    \[
    \exp (\log( a)) \equiv a,
    \]
    then $\sigma_{a^s}(y,\eta)$ defines a symbol in $S_{\rho, \delta}^{m\operatorname{Re}(s)}(\nabla)$ for any $s\in \bbC$, and satisfies 
    \begin{align*}
        \sigma_{a^s}(y,\eta) \sim &\, a(y, \eta)^s\\
        +& \sum_{k=1}^\infty \sum_{\mathfrak{I}_k}(-1)^{1+|\mathfrak{I}_{k,L}|}s(s-1)\cdots(s-k-|\mathfrak{I}_{k,L}|+1) \frac{a(y,\eta)^{s-k-|\mathfrak{I}_{k,L}|}}{(k+|\mathfrak{I}_{k,L}|)!}\mathfrak{r}_{\mathfrak{I}_k}(y,\eta)
    \end{align*}
    as $\langle\eta\rangle_{y}\to\infty$, where 
    \[
    a(y, \eta)^s:= \exp (s\log(a(y,\eta))), \quad (y,\eta)\in T^*M. 
    \]
    Moreover, the family of operators $A^s:=a(X,D)^s$ is a group, $A^{s+t}=A^sA^t$, $A^1=A$, $A^0=I$. Furthermore, $A^s$ is an elliptic, invertible pseudo-differential operator such that for any real $k$, $A^s$ is an isomorphism from $H^k$ to $H^{k-m\operatorname{Re}(s)}$. For each real $\nu$, $A^s$ is an analytic family of operators from $H^k$ to $H^{k-\nu}$ for $\operatorname{Re}(s)<\nu$.  
\end{cor}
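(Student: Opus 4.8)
The plan is to obtain $A^{s}$ as $f(A)$ with $f(\lambda)=\lambda^{s}$ and then read off every assertion from the functional calculus of Section~\ref{Section 4} together with the resolvent bounds of Section~\ref{Section 3}.

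First I would fix the branch of the power. Under the standing assumptions of Section~\ref{Section 4} the ray of minimal growth is $R_{\theta}=(-\infty,0]$ and $0\notin\sigma(A)$, so $\sigma(A)\subset\bbC\setminus(-\infty,0]$ and $f(\lambda):=\lambda^{s}=\exp(s\log\lambda)$, with $\log$ the principal branch, is holomorphic on $\bbC\setminus\Lambda$; since $\operatorname{arg}\lambda$ is bounded there, $|f(\lambda)|=|\lambda|^{\operatorname{Re}(s)}e^{-\operatorname{Im}(s)\operatorname{arg}\lambda}\le C\,|\lambda|^{\operatorname{Re}(s)}$ uniformly on $\bbC\setminus\Lambda$. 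The hypothesis $\exp(\log a)\equiv a$ says precisely that $a(y,\eta)$ avoids the cut, so $a(y,\eta)^{s}:=\exp(s\log a(y,\eta))$ is a well-defined smooth function on $T^{*}M$. Now Corollary~\ref{corFunctionalPositive} (if $m>0$) and Theorem~\ref{thmFunctionalCalZero} (if $m=0$) apply with this $f$ and give that $A^{s}:=f(A)$ is a pseudo-differential operator in $\Psi_{\rho,\delta}^{\,m\operatorname{Re}(s)}(\Omega^{\kappa},\nabla)$, with the asymptotic expansion of its symbol obtained from \eqref{asympSymbolFuncOpe} (resp.\ \eqref{asymptForFunctionOfSymbol}) by substituting $f^{(j)}(\lambda)=s(s-1)\cdots(s-j+1)\lambda^{s-j}$ for $j=k+|\mathfrak{I}_{k,L}|$; this is exactly the displayed formula. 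Its leading term $a(y,\eta)^{s}$ lies in $S_{\rho,\delta}^{\,m\operatorname{Re}(s)}(\nabla)$ and, using ellipticity of $a$ together with the boundedness of $\operatorname{arg}a(y,\eta)$, satisfies $|a(y,\eta)^{s}|^{-1}\le C\,\langle\eta\rangle_{y}^{-m\operatorname{Re}(s)}$; hence $A^{s}\in HS_{\rho,\delta}^{\,m\operatorname{Re}(s),\,m\operatorname{Re}(s)}(\nabla)$ is elliptic.

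Next, the group laws $A^{s+t}=A^{s}A^{t}$, $A^{1}=A$, $A^{0}=I$. For $\operatorname{Re}(s),\operatorname{Re}(t)<0$ these follow, as in the classical theory of powers of sectorial operators (cf.\ \cite{Haase}), from the resolvent identity and Fubini applied to the product of the two absolutely convergent contour integrals; the general case and the degenerate values $s=0,1$ reduce to this by the regularization $\lambda^{s}=g(\lambda)\lambda^{k}$ already used in the proofs of Theorem~\ref{thmFunctionalCal} and Corollary~\ref{corFunctionalPositive}, the identities being independent of the auxiliary integer $k$ by Cauchy's theorem. The only new ingredient is that every power stays in Safarov's classes, which is granted by the previous paragraph and Theorem~\ref{producto}. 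In particular $A^{s}A^{-s}=A^{-s}A^{s}=I$, so $A^{s}$ is invertible with $(A^{s})^{-1}=A^{-s}$; combined with Theorem~\ref{L2bound} in its global form ($M$ being compact), $A^{s}$ is therefore an isomorphism $H^{k}\to H^{k-m\operatorname{Re}(s)}$ for every real $k$.

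Finally, for the analyticity I would integrate by parts $N$ times in \eqref{functionalCalculus} (using $\tfrac{d}{d\lambda}(A-\lambda I)^{-1}=(A-\lambda I)^{-2}$), rewriting $\int_{\Gamma}\lambda^{s}(A-\lambda I)^{-1}\,d\lambda$ as a constant multiple of $\int_{\Gamma}\lambda^{s+N}(A-\lambda I)^{-1-N}\,d\lambda$, with boundary contributions vanishing once $N$ is large enough; Theorem~\ref{EstimadoFinalResolvente} then makes this integral converge in $\mathcal L(H^{k},H^{k-\nu})$ locally uniformly in $s$ throughout the half-plane in which $A^{s}$ maps $H^{k}$ into $H^{k-\nu}$, and since $s\mapsto\lambda^{s+N}$ is entire, $s\mapsto A^{s}$ is analytic there; the group law $A^{s}=A^{\sigma_{0}}A^{s-\sigma_{0}}$ with $\sigma_{0}$ real then lets one sweep out the full stated range $\operatorname{Re}(s)<\nu$. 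I expect the main obstacle to be bookkeeping rather than conceptual: verifying that the group identities hold on the nose (not merely modulo smoothing operators), uniformly in the regularizing exponent $k$, and that the integration-by-parts argument is compatible with the corner of the contour $\Gamma$ at $\epsilon e^{\pm i\pi}$. Everything else is a direct consequence of Theorems~\ref{thmFunctionalCal}, \ref{thmFunctionalCalZero}, \ref{producto}, \ref{L2bound} and \ref{EstimadoFinalResolvente}.
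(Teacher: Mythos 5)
Your proposal follows essentially the same route as the paper: define $A^{s}$ by the Dunford--Riesz integral with $f(\lambda)=\lambda^{s}$, invoke Theorem \ref{thmFunctionalCal} / Corollary \ref{corFunctionalPositive} for the symbol class and the asymptotic expansion, prove $A^{s}A^{t}=A^{s+t}$ via the resolvent identity and Cauchy's theorem on two nested contours, and obtain analyticity by manipulating the contour integral (you integrate by parts where the paper differentiates under the integral sign, a cosmetic difference). The one item you defer to ``bookkeeping'' --- that the functional-calculus operator $A^{-k}$ is the genuine inverse of $A^{k}$ for a positive integer $k$, so that $A^{0}=I$ holds on the nose and not merely modulo smoothing operators --- is precisely the step the paper settles by an explicit computation: for integer exponents the contributions of $\Gamma_{1}$ and $\Gamma_{3}$ cancel, and the remaining integral over $\Gamma_{2}$ turns into $(A^{-1})^{k}$ after the substitution $\mu=1/\lambda$.
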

\begin{proof}
    Clearly the function $\lambda^s:= \exp (s\log(\lambda))$ satisfies the estimate 
    \[
    |\lambda^s|\leq |\lambda|^{\operatorname{Re}(s)},
    \]
    thus the first part follows from Theorem \ref{thmFunctionalCal} if $\operatorname{Re}(s)<0$ or from Corollary \ref{corFunctionalPositive} if $\operatorname{Re}(s)\geq0$. Now let us check the other properties by using standard arguments. Suppose $\operatorname{Re}(s),\operatorname{Re}(t) <0$, let us consider the contour $\Gamma' =\Gamma_1'\cup\Gamma_2'\cup\Gamma_3'$ given by 
\begin{equation*}\label{contorno2}
    \Gamma' = \begin{cases}
    re^{i(\pi-\varepsilon)} \,(+\infty>r>\frac{3}{2}\epsilon)\text{ on } \Gamma_1',\\
    \frac{3}{2}\epsilon e^{i\theta}\, (\pi-\varepsilon>\theta>-\pi+\varepsilon) \text{ on } \Gamma_2',\\
    re^{-i (\pi-\varepsilon)}\, (\frac{3}{2}\epsilon<r<+\infty)\text{ on } \Gamma_3'. 
\end{cases}
\end{equation*}

\begin{center}
    \begin{tikzpicture}
    \draw[help lines, color=gray!30, dashed] (-3.9,-1.9) grid (1.9,1.9);
\draw [->] (-4,0)--(2,0) node[right]{$\text{Re}$};
\draw [->] (0,-2)--(0,2) node[above]{$\text{Im}$};
  \draw[-,blue, thick] (-4,0)--(0,0) node[xshift=-1cm, below]{$\resizebox{.02\hsize}{!}{$R_\theta$}$};

    \begin{scope}[even odd rule, decoration={
    markings,
    mark=at position 0.5 with {\arrow{>}}}]
        \clip (2,-2) rectangle (-4,2) (0,0) circle (1);
        \draw [postaction={decorate},thick] (-4,0.5) -- (0,0.5) node[xshift=-3cm, above]{$\resizebox{.02\hsize}{!}{ }$};
        \draw [postaction={decorate},thick] (0,-0.5) -- (-4,-0.5) node[xshift=1cm, below]{$\resizebox{.012\hsize}{!}{$\Gamma$}$};
        
    \end{scope}
    \begin{scope}[even odd rule, decoration={
    markings,
    mark=at position 0.5 with {\arrow{>}}}]
        \clip (2,-2) rectangle (-4,2) (0,0) circle (1.5);
        \draw [postaction={decorate},color={rgb,255:red,102; green,0; blue,102}, thick] (-4,2) -- (0,0.5) node[xshift=-2.5cm, yshift=0.9cm, above]{$\resizebox{.02\hsize}{!}{$\Gamma_1'$} $};
        \draw [postaction={decorate},color={rgb,255:red,102; green,0; blue,102},thick] (0,-0.5) -- (-4,-2) node[xshift=2.5cm, yshift=0.9cm, below]{$\resizebox{.02\hsize}{!}{$\Gamma_3'$}$};
        
    \end{scope}

    \begin{scope}[even odd rule, decoration={
    markings,
    mark=at position 0.15 with {\arrow{<}},
    mark=at position 0.85 with {\arrow{<}}}]
        \clip (-2,-2) rectangle (3,2) (0,0.5) rectangle (-3,-0.5);
        \draw [postaction={decorate}, thick] (0,0) circle (1) node[xshift=1.2cm, above]{$\resizebox{.02\hsize}{!}{ }$};
    \end{scope}
    \begin{scope}[even odd rule, decoration={
    markings,
    mark=at position 0.15 with {\arrow{<}},
    mark=at position 0.85 with {\arrow{<}}}]
        \clip (-2,-2) rectangle (3,2) (0,0.93) rectangle (-3,-0.93);
        \draw [postaction={decorate},color={rgb,255:red,102; green,0; blue,102}, thick] (0,0) circle (1.5) node[xshift=1.8cm, above]{$\resizebox{.02\hsize}{!}{$\Gamma_2'$}$};
    \end{scope}
\end{tikzpicture}
\end{center}
By the Cauchy integral theorem we can change $\Gamma$ to $\Gamma'$ in \eqref{functionalCalculus} without affecting the construction. We use this fact and the identity 
\[
(A-\lambda I)^{-1} (A-\mu I)^{-1} = \frac{1}{\lambda-\mu} \left[(A-\lambda I)^{-1} - (A-\mu I)^{-1}\right]
\]
to compute the operator $A^sA^t$. In fact, we have 
\begin{align*}
    A^sA^t &= -\frac{1}{4\pi^2}\int_{\Gamma'}\int_{\Gamma}\lambda^s\mu^t (A-\lambda I)^{-1} (A-\mu I)^{-1} \, d\mu \,d\lambda\\
    &= -\frac{1}{4\pi^2}\int_{\Gamma'} \lambda^s(A-\lambda I)^{-1}\int_{\Gamma}\frac{\mu^t}{\lambda-\mu} \, d\mu \,d\lambda+\frac{1}{4\pi^2}\int_{\Gamma'}\int_{\Gamma}\frac{\lambda^s\mu^t}{\lambda-\mu}  (A-\mu I)^{-1} \, d\mu \,d\lambda\\
    &= \frac{i}{2\pi}\int_{\Gamma'}\lambda^{s+t} (A-\lambda I)^{-1} \, d\lambda + \frac{1}{4\pi^2}\int_{\Gamma}\mu^t(A-\mu I)^{-1}\underbrace{\int_{\Gamma'}\frac{\lambda^s}{\lambda-\mu} \,d\lambda}_{= 0}\, d\mu\\
    &= A^{s+t}. 
\end{align*}
Now if $\operatorname{Re}(s),\operatorname{Re}(t)\geq0$, we have that from construction $A^s= A^kA^{s-k}$ and $A^t= A^lA^{t-l}$, where $k$ and $l$ are integers satisfying: $0\leq s<k$ and $0\leq t < l$. Hence, using that $A$ commutes with its resolvent we have 
\[
A^sA^t = (A^kA^{s-k})(A^lA^{t-l}) = (A^{k+l})(A^{s+t-(k+l)})=A^{s+t}. 
\]
Let us continue with the inverse of $A^{s}$. First, let us assume $s$ is just a positive integer. In this case we have the equality $(re^{i\pi})^{-s}=(re^{-i\pi})^{-s}$, which will imply that the integrals over $\Gamma_1$ and $\Gamma_3$ cancel each other  out because they have opposite orientation. Indeed, we have 
\begin{align*}
    A^{-s} &= \frac{1}{2\pi i}\int_{\Gamma} \lambda^{-s} (A-\lambda I)^{-1}\, d\lambda = \frac{1}{2\pi i}\int_{\Gamma_1\cup\Gamma_2\cup\Gamma_3} \lambda^{-s} (A-\lambda I)^{-1}\, d\lambda\\ 
    &= \frac{1}{2\pi i}\left(\int_{\Gamma_1} \lambda^{-s} (A-\lambda I)^{-1}\, d\lambda- \int_{\Gamma_1} \lambda^{-s} (A-\lambda I)^{-1}\, d\lambda+\int_{\Gamma_2} \lambda^{-s} (A-\lambda I)^{-1}\, d\lambda\right)\\
    &=\frac{1}{2\pi i}\int_{\Gamma_2} \lambda^{-s} (A-\lambda I)^{-1}\, d\lambda = -\frac{1}{2\pi i}\int_{\tilde{\Gamma}_2} \mu^s \left(A-\frac{1}{\mu} I\right)^{-1}\, \frac{1}{\mu^2}\, d\mu \\
    &= \frac{A^{-1}}{2\pi i} \int_{\tilde{\Gamma}_2} \mu^{s-1} \left(A^{-1}-\mu I\right)^{-1}\, d\mu= A^{-1}(A^{-1})^{s-1} = (A^{-1})^{s},
\end{align*}
where $\tilde{\Gamma}_2=\{|\mu|=1/\epsilon\}$, and thus proving that $A^{-s}$ is the inverse of $A^s$. Therefore, together with previous group property, we have that for any $s\in \bbC$, $A^sA^{-s}=I$. Moreover, this also implies that $A^s$ is an isomorphism from $H^k$ to $H^{k-m\operatorname{Re}(s)}$. 

Finally, for analiticity, because \eqref{functionalCalculus} is convergent, we can just take the derivative inside the integral and obtain 
\[
\frac{d}{ds} A^s = \frac{1}{2\pi i } \int_\Gamma \lambda^s \log\lambda (A-\lambda I)^{-1}\, d\lambda,
\]
which converges in operator norm because of the same reasons \eqref{functionalCalculus} does. Then we can rewrite $A^s=A^{s-\nu/m}A^{\nu/m}$ giving us the boundedness from $H^k$ to $H^{k-\nu}$ for $\operatorname{Re}(s)<\nu$. 
\end{proof}
\begin{rem}
    We would like to mention that the last result includes a very useful operator, namely the square root $\sqrt{a(X,D)}$ just by taking $s=1/2$. 
\end{rem}
The next example will take advantage of the freedom given by Theorem \ref{thmFunctionalCalZero} for zero order pseudo-differential operators, and follows immediately from it. This one later on will be used for the Szeg\"o limit theorem.  
\begin{cor}\label{corLog}
Let $0\leq \delta<\rho \leq 1$. Let at least one of the conditions (1)-(3) of Theorem \ref{thmFunctionalCal} be fulfilled. Let $a\in S_{\rho, \delta}^{0}(\nabla)$ and let $\Lambda$ be the sector enclosed by a finite curve $\Gamma$ surrounding the spectrum of $a(X,D)$. If the symbol $a$ satisfies 
    \[
    \exp (\log( a)) \equiv a,
    \]
    then $\sigma_{\log a}(y,\eta)$ defines a symbol in $S_{\rho, \delta}^{0}(\nabla)$ and satisfies 
\begin{equation}\label{asymLog}
    \sigma_{\log a}(y,\eta) \sim   \log(a(x,\zeta))  
        + \sum_{k=1}^\infty \sum_{\mathfrak{I}_k}\frac{(-1)^k}{k+|\mathfrak{I}_{k,L}|} a(x,\zeta)^{-k-|\mathfrak{I}_{k,L}|} \mathfrak{r}_{\mathfrak{I}_k}(y,\eta)
\end{equation}
as $\langle\eta\rangle_{y}\to\infty$.       
\end{cor}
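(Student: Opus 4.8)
The plan is to obtain the statement as a direct corollary of Theorem \ref{thmFunctionalCalZero} applied with $f=\log$, and then to read off the coefficients in \eqref{asymLog} by substituting the derivatives of $\log$ into the general expansion \eqref{asymptForFunctionOfSymbol}. First I would check that Theorem \ref{thmFunctionalCalZero} applies. Since $a\in S_{\rho,\delta}^{0}(\nabla)$, the operator $a(X,D)$ has order zero, hence is bounded on $L^{2}(M)$ by Theorem \ref{L2bound}; together with the resolvent analysis of Section \ref{Section 3} (in particular Theorem \ref{EstimadoFinalResolvente}) this makes its spectrum a bounded discrete subset of $\bbC$. The hypothesis $\exp(\log a)\equiv a$ means exactly that the values of $a$ avoid the branch cut $(-\infty,0]$, so the principal branch of $\log$ is holomorphic on a neighbourhood of the set of values of $a$; invoking parameter-ellipticity of $a-\lambda$ for $\lambda\in(-\infty,0]$ one locates $\sigma(a(X,D))$ inside $\bbC\setminus(-\infty,0]$ and chooses the finite contour $\Gamma$ there, so that $\log$ is holomorphic on (a neighbourhood of) the spectrum. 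Theorem \ref{thmFunctionalCalZero} then gives that $\log(a(X,D))$, defined by the Dunford--Riesz integral along $\Gamma$, belongs to $\Psi_{\rho,\delta}^{0}(\Omega^{\kappa},\nabla)$, that $\sigma_{\log a}\in S_{\rho,\delta}^{0}(\nabla)$, and that $\sigma_{\log a}$ satisfies \eqref{asymptForFunctionOfSymbol} with $f=\log$.

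The remaining step is the computation turning \eqref{asymptForFunctionOfSymbol} into \eqref{asymLog}. I would use the elementary formula $\frac{d^{j}}{dz^{j}}\log z=(-1)^{j-1}(j-1)!\,z^{-j}$, valid for every integer $j\geq1$. In the $k$-th summand of \eqref{asymptForFunctionOfSymbol} the order of differentiation is $j=k+|\mathfrak{I}_{k,L}|\geq1$ (as $k\geq1$), so
\begin{align*}
(-1)^{1+|\mathfrak{I}_{k,L}|}\,\frac{f^{(k+|\mathfrak{I}_{k,L}|)}(a(y,\eta))}{(k+|\mathfrak{I}_{k,L}|)!}
&=(-1)^{1+|\mathfrak{I}_{k,L}|}\,\frac{(-1)^{k+|\mathfrak{I}_{k,L}|-1}\,(k+|\mathfrak{I}_{k,L}|-1)!}{(k+|\mathfrak{I}_{k,L}|)!}\,a(y,\eta)^{-k-|\mathfrak{I}_{k,L}|}\\
&=\frac{(-1)^{k}}{k+|\mathfrak{I}_{k,L}|}\,a(y,\eta)^{-k-|\mathfrak{I}_{k,L}|},
\end{align*}
where the sign is simplified via $(-1)^{1+|\mathfrak{I}_{k,L}|}(-1)^{k+|\mathfrak{I}_{k,L}|-1}=(-1)^{k+2|\mathfrak{I}_{k,L}|}=(-1)^{k}$ and the factorials via $(k+|\mathfrak{I}_{k,L}|-1)!/(k+|\mathfrak{I}_{k,L}|)!=1/(k+|\mathfrak{I}_{k,L}|)$. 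Combining with the leading term $f(a(y,\eta))=\log(a(y,\eta))$ yields precisely \eqref{asymLog}, with $\mathfrak{r}_{\mathfrak{I}_k}$ as in Remark \ref{remExpAsyPara}.

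The only point needing genuine attention is the first one: verifying that the contour $\Gamma$ can be chosen inside $\bbC\setminus(-\infty,0]$ while still enclosing all of $\sigma(a(X,D))$, which is exactly where the assumption $\exp(\log a)\equiv a$ is used and which is supplied by the parameter-ellipticity results of Section \ref{Section 3}. Once that is in place, the membership $\sigma_{\log a}\in S_{\rho,\delta}^{0}(\nabla)$ and the form of the expansion come for free from Theorem \ref{thmFunctionalCalZero} together with the short derivative computation above, and there is no further analytic difficulty.
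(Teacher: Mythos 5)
Your proposal is correct and matches the paper's approach: the corollary is presented as an immediate consequence of Theorem \ref{thmFunctionalCalZero} applied with $f=\log$, and your substitution of $f^{(j)}(z)=(-1)^{j-1}(j-1)!\,z^{-j}$ into \eqref{asymptForFunctionOfSymbol} correctly reproduces the coefficients $\frac{(-1)^{k}}{k+|\mathfrak{I}_{k,L}|}\,a^{-k-|\mathfrak{I}_{k,L}|}$ of \eqref{asymLog}. Your additional care in locating the contour $\Gamma$ away from the branch cut, using the hypothesis $\exp(\log a)\equiv a$, is a reasonable elaboration of a point the paper leaves implicit.
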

\begin{rem}\label{remNegativeLog}
    We emphasize that for any operator in the class $\Psi^{<0}$, the functional calculus developed for zero-order operators can still be applied. This is due to the inclusion $\Psi^{<0}\subseteq \Psi^{0}$, which ensures that the functional calculus yields a new pseudo-differential operator of order zero. However, in certain cases, it is possible to recover the original negative order of the operator; an important example being the logarithm. Specifically, let $\mathfrak{n}<0$ and suppose $a\in S_{\rho, \delta}^{\mathfrak{n}}(\nabla)$. Then, by Corollary \ref{corLog}, the symbol $\sigma_{\log a}$ belongs to $S_{\rho, \delta}^{0}(\nabla)$ and admits the asymptotic expansion \eqref{asymLog}. By carefully analyzing this expansion in conjunction with the original symbol estimates, one can show that in fact $\sigma_{\log a}\in S_{\rho, \delta}^{\mathfrak{n}}(\nabla)$, thereby recovering the negative order.
\end{rem}
Another classical and extensively studied case is the exponential of an operator, primarily due to its deep connection with time-evolution equations. However, when dealing with such operators, a slight modification in the choice of the contour $\Gamma$ is required. Aside from this adjustment, all previously established results apply mutatis mutandis. We now define a new contour $\Theta = \Theta_1\cup\Theta_2\cup\Theta_3$, constructed so that every $\lambda$ on the contour has a positive real part.
\begin{equation*}\label{contorno3}
    \Theta = \begin{cases}
    re^{i\left(\frac{\pi}{2}-\varepsilon \right)} \,(+\infty>r>\epsilon)\text{ on } \Theta_1,\\
    \epsilon e^{i\theta}\, (\frac{\pi}{2}-\varepsilon>\theta>-\frac{\pi}{2}+\varepsilon) \text{ on } \Theta_2,\\
    re^{-i\left(\frac{\pi}{2}-\varepsilon \right)}\, (\epsilon<r<+\infty)\text{ on } \Theta_3.
\end{cases}
\end{equation*} 
\begin{center}
    \begin{tikzpicture}
    \draw[help lines, color=gray!30, dashed] (-2.9,-2.9) grid (2.9,2.9);
\draw [->] (-3,0)--(3,0) node[right]{$\text{Re}$};
\draw [->] (0,-3)--(0,3) node[above]{$\text{Im}$};
  \draw[-,blue, thick] (-3,0)--(0,0) node[xshift=-1cm, below]{$\resizebox{.02\hsize}{!}{$R_\theta$}$};

    \begin{scope}[even odd rule, decoration={
    markings,
    mark=at position 0.5 with {\arrow{>}}}]
        \clip (2,-3) rectangle (-4,3) (0,0) circle (1);
        \draw [postaction={decorate}, color={rgb,255:red,102; green,0; blue,102}, thick] (0.1,3) -- (0,0.5) node[xshift=0.5cm, yshift=1cm, above]{$ \resizebox{.02\hsize}{!}{$\Theta_1$} $};
        \draw [postaction={decorate}, color={rgb,255:red,102; green,0; blue,102}, thick] (0,-0.5) -- (0.1,-3) node[xshift=0.4cm, yshift=0.9cm,  above]{$\resizebox{.02\hsize}{!}{$\Theta_3$}$};
        
    \end{scope}

    \begin{scope}[even odd rule, decoration={
    markings,
    mark=at position 0.15 with {\arrow{<}},
    mark=at position 0.85 with {\arrow{<}}}]
        \clip (-2,-2) rectangle (3,2) (0,1.02) rectangle (-3,-1.02);
        \draw [postaction={decorate}, color={rgb,255:red,102; green,0; blue,102}, thick] (0,0) circle (1) node[xshift=1.2cm, above]{$\resizebox{.02\hsize}{!}{$\Theta_2$}$};
    \end{scope}
\end{tikzpicture}
\end{center}
Thus we define $\Sigma$ to be the sector surrounded by $\Theta$. 
\begin{rem}
    Actually, $\Theta$ is just a nice deformation of $\Gamma$ so that the real part of the elements of the curve has positive real part. 
\end{rem}
\begin{cor}\label{exponAsym}
     Let $0\leq \delta<\rho \leq 1$, $m\geq 0$. Let at least one of the conditions (1)-(3) of Theorem \ref{realEstiParameSymbol} be fulfilled. Let $a\in S_{\rho, \delta}^{m}(\nabla)$ be parameter-elliptic with respect to $\Sigma$ and assume that the operator $a(X,D)$ is positive real (so that its spectrum has positive real part). Then $\sigma_{e^{-a}}(y,\eta)$ defines a symbol in $S_{\rho, \delta}^{-m}(\nabla)$ and satisfies
    \[
    \sigma_{e^{-a}}(y,\eta) \sim e^{-a(y,\eta)}\left[1 +   \sum_{k=1}^\infty \sum_{\mathfrak{I}_k} \frac{(-1)^{1+k}}{(k+|\mathfrak{I}_{k,L}|)!}\mathfrak{r}_{\mathfrak{I}_k}(y,\eta)\right],
    \]
as $\langle\eta\rangle_{y}\to\infty$. Furthermore, if $m>0$ the exponential operator $e^{-a(X,D)}$ is actually smoothing. 
\end{cor}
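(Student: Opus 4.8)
The plan is to insert $f(\lambda)=e^{-\lambda}$ into the functional calculus already constructed for functions of polynomial growth and decay, exploiting that this $f$ decays faster than any polynomial along the contour $\Theta$. First I would observe that $f(\lambda)=e^{-\lambda}$ is entire, hence holomorphic on $\bbC\setminus\Sigma$, and that by the very construction of $\Theta$ every point $\lambda\in\Theta$ satisfies $\operatorname{Re}\lambda\geq\epsilon\sin\varepsilon>0$ (this is precisely the purpose of the deformation noted in the remark preceding the statement). Consequently $|f(\lambda)|=e^{-\operatorname{Re}\lambda}\leq C_N|\lambda|^{-N}$ on $\Theta$ for every integer $N\geq 1$, so $f$ satisfies the decay hypothesis \eqref{decaiminetoFuncion} with $s=-N$ for each such $N$. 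The next step is to note that the arguments of Sections \ref{Section 3} and \ref{Section 4}---in particular the parametrix construction of Theorem \ref{realEstiParameSymbol}, the resolvent approximation of Corollary \ref{normResolvent}, and Theorem \ref{thmFunctionalCal} itself---use only parameter-ellipticity with respect to a sector together with the three-piece shape of its boundary contour, hence they apply verbatim with $\Sigma$ and $\Theta$ in place of $\Lambda$ and $\Gamma$ (and with $\Theta$ replacing $\Gamma$ in \eqref{functionalCalculus} by Cauchy's theorem). In the degenerate case $m=0$ one appeals to Theorem \ref{thmFunctionalCalZero} instead.

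With this set up, applying Theorem \ref{thmFunctionalCal} with $s=-1$ (for $m>0$) shows at once that the symbol $\sigma_{e^{-a}}$ of the operator \eqref{functionalCalculus} lies in $S_{\rho,\delta}^{-m}(\nabla)$; for $m=0$ the analogous conclusion (with $-m=0$) is Theorem \ref{thmFunctionalCalZero}. To obtain the smoothing assertion I would, assuming $m>0$, apply Theorem \ref{thmFunctionalCal} once for every integer $N\geq 1$ with $s=-N$, which gives $e^{-a(X,D)}\in\Psi_{\rho,\delta}^{-mN}(\Omega^\kappa,\nabla)$ and therefore
\[
|\partial_\eta^\alpha\nabla_{i_1}\cdots\nabla_{i_q}\sigma_{e^{-a}}(y,\eta)|\leq C_{N,\alpha,i_1,\dots,i_q}\,\langle\eta\rangle_y^{-mN+\delta q-\rho|\alpha|}
\]
on every compact set, for all $\alpha,i_1,\dots,i_q$. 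Since $N$ is arbitrary, $\sigma_{e^{-a}}$ and all its horizontal and vertical derivatives decay faster than any power of $\langle\eta\rangle_y$, so the Schwartz kernel of $e^{-a(X,D)}$ is smooth near the diagonal; together with its smoothness away from the diagonal, which is built into Definition \ref{dfnPseudo}, this yields $e^{-a(X,D)}\in\Psi^{-\infty}$.

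For the asymptotic expansion I would specialise the general formula \eqref{asympSymbolFuncOpe} (or \eqref{asymptForFunctionOfSymbol} when $m=0$) to $f(\lambda)=e^{-\lambda}$. Here $f^{(j)}(\lambda)=(-1)^je^{-\lambda}$, so $f^{(k+|\mathfrak{I}_{k,L}|)}(a(y,\eta))=(-1)^{k+|\mathfrak{I}_{k,L}|}e^{-a(y,\eta)}$; multiplying by the prefactor $(-1)^{1+|\mathfrak{I}_{k,L}|}$ produces the sign $(-1)^{1+k+2|\mathfrak{I}_{k,L}|}=(-1)^{1+k}$, and factoring $e^{-a(y,\eta)}$ out of the double sum gives precisely
\[
\sigma_{e^{-a}}(y,\eta)\sim e^{-a(y,\eta)}\left[1+\sum_{k=1}^\infty\sum_{\mathfrak{I}_k}\frac{(-1)^{1+k}}{(k+|\mathfrak{I}_{k,L}|)!}\,\mathfrak{r}_{\mathfrak{I}_k}(y,\eta)\right]
\]
as $\langle\eta\rangle_y\to\infty$, as claimed.

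I do not anticipate a genuine obstacle: once Theorem \ref{thmFunctionalCal} is in hand, the proof is essentially bookkeeping. The two points deserving a line of care are the transfer of the resolvent/parametrix apparatus from the fixed contour $\Gamma$ to the contour $\Theta$---harmless, since $\Theta$ has the same structure and all the estimates depend only on parameter-ellipticity with respect to a sector---and the passage from ``$e^{-a(X,D)}\in\Psi_{\rho,\delta}^{-mN}$ for all $N$'' to ``$e^{-a(X,D)}\in\Psi^{-\infty}$'', i.e. the standard fact that $\bigcap_{N\geq 1}\Psi_{\rho,\delta}^{-mN}=\Psi^{-\infty}$.
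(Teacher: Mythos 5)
Your proposal is correct and follows essentially the same route as the paper: define $e^{-a(X,D)}$ by the Dunford integral over $\Theta$, exploit the superpolynomial decay of $e^{-\lambda}$ along $\Theta$ to run the machinery of Theorem \ref{thmFunctionalCal} (the paper phrases this decay as $|e^{-z}|\leq (1+\operatorname{Re}z)^{-N}$ on the right half-plane, remarking that $e^{-z}$ fails the bound \eqref{decaiminetoFuncion} globally on $\bbC\setminus\Sigma$ even though, as you note, the contour bound is all that is used), and then read off the symbol class, the expansion, and the smoothing property. Your sign computation in \eqref{asympSymbolFuncOpe} and the passage from $e^{-a(X,D)}\in\Psi_{\rho,\delta}^{-mN}\left(\Omega^{\kappa},\nabla\right)$ for all $N$ to $e^{-a(X,D)}\in\Psi^{-\infty}$ both check out.
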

\begin{proof}
Let $\bbC^{+}:= \{z\in\bbC : \operatorname{Re(z)>0}\}$. Note that the function $f: \bbC^{+} \to \bbC^{+}$ given by $f(z)=e^{-z}$ will not satisfy the estimate \eqref{decaiminetoFuncion}, but 
\[
|f(z)| = |e^{-z}| = e^{-\operatorname{Re}(z)} \leq \frac{1}{1+\operatorname{Re}(z)}.
\]
So using this estimate we can give sense to the integral 
\[
e^{-a(X,D)} = \frac{1}{2\pi i}\int_{\Theta} e^{-z} (A-z I)^{-1}\, dz,
\]
and thus obtain an analogue for Theorem \ref{thmFunctionalCal}. Moreover, for any $N>1$ a much stronger inequality holds, namely 
\[
|f(z)| \leq \frac{1}{(1+\operatorname{Re(z)})^N},
\]
so that if $m>0$ we get that the exponential operator is smoothing. 
\end{proof}

\section{Applications}\label{Section 5}
We conclude the article with the applications of our main result mentioned in the introduction, which essentially consists of calculating traces of some functions of operators. Here $(M,g)$ is still a closed Riemannian manifold. 

\subsection{Szeg\"o limit theorem}

Our first application is a generalization of Szeg\"o theorem in the sense of Widom \cite{w2}. Indeed this generalizes Widom's result because we are not just considering $(1,0)$-classes, but $(\rho,\delta)$-classes. Although, we are not considering families of operators. 

In Subsection \ref{traceSubsection} we defined a trace for Safarov pseudo-differential operators introducing us the notion of trace-class operators. For such operators we are going to define the so called \textit{log-determinant}: 

\begin{defn}
    Let $0\leq \delta<\rho \leq 1$ and $-\mathfrak{n}<-n$. Let $a\in S_{\rho, \delta}^{-\mathfrak{n}}(\nabla)$, then the determinant of $I+a(X,D)$ is defined as 
    \[
    \log \det(I+a(X,D)) = \exp\left\{\tr(\log(I+a(X,D)))\right\}. 
    \]
\end{defn}
 
\begin{thm}[Szeg\"o type-theorem]\label{LimitThe}
    Let $0\leq \delta<\rho \leq 1$ and $\mathfrak{n}>n$. Let at least one of the conditions (1)-(3) of Theorem \ref{thmFunctionalCal} be fulfilled. Let $a\in S_{\rho, \delta}^{-\mathfrak{n}}(\nabla)$ be such that 
    \[
    \exp (\log( 1+a)) \equiv 1+a. 
    \]
    Then $\det(I+a(X,D))\neq 0$ and one has the equality
    \begin{align*}
        \log\det(1+a(X,D)) = \frac{1}{(2\pi)^n}&\left[ \int_{T^*M} \log(1+a(x,\zeta)) \, \Omega \right.\\
        &+ \left.\sum_{k=1}^\infty \sum_{\mathfrak{I}_k}\frac{(-1)^k}{k+|\mathfrak{I}_{k,L}|}\int_{T^*M} (1+a(x,\zeta))^{-k-|\mathfrak{I}_{k,L}|} \mathfrak{r}_{\mathfrak{I}_k}(x,\zeta) \, \Omega\right],
    \end{align*}
    where $\Omega$ is as in \eqref{cotangentBundleForm}.
\end{thm}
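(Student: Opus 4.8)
The plan is to identify $\log\det(I+a(X,D))$ with the trace of the pseudo-differential operator $\log(I+a(X,D))$, to compute that trace via Theorem \ref{traza}, and then to substitute the asymptotic expansion \eqref{asymLog}.

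First I would note that $a(X,D)\in\Psi^{-\mathfrak{n}}\subset\Psi^{<-n}$ is trace class, hence compact, so the spectrum of $I+a(X,D)$ is discrete and accumulates only at $1$; together with the standing hypothesis $\exp(\log(1+a))\equiv 1+a$ (which says $1+a(x,\zeta)$ is nowhere zero and carries a well-defined logarithm) this allows one to enclose $\sigma(I+a(X,D))$ by a finite curve $\Gamma$ on which a branch of $\log$ is holomorphic, exactly as required by Corollary \ref{corLog}. Applying Corollary \ref{corLog} to the symbol $1+a\in S_{\rho,\delta}^{0}(\nabla)$ produces $\log(I+a(X,D))\in\Psi_{\rho,\delta}^{0}(\Omega^\kappa,\nabla)$ with symbol obeying \eqref{asymLog} (with $a$ replaced by $1+a$). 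The next step, in the spirit of Remark \ref{remNegativeLog}, is to upgrade the order: since $1+a$ is bounded away from $0$, the chain rule gives $\log(1+a(x,\zeta))\in S_{\rho,\delta}^{-\mathfrak{n}}(\nabla)$, and each correction term $(1+a)^{-k-|\mathfrak{I}_{k,L}|}\mathfrak{r}_{\mathfrak{I}_k}$ in \eqref{asymLog} involves, through the factors $P_{\beta,\gamma}^{(\kappa)}$ and products of horizontal and vertical derivatives of $a$, an order gain controlled by the number $r>0$ of Theorem \ref{producto}; iterating, the $k$-th term has order $\le -\mathfrak{n}-kr$, whence $\sigma_{\log(1+a)}\in S_{\rho,\delta}^{-\mathfrak{n}}(\nabla)$. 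In particular $\log(I+a(X,D))\in\Psi^{-\mathfrak{n}}\subset\Psi^{<-n}$ is trace class, so $\tr(\log(I+a(X,D)))$ is a finite complex number and therefore $\det(I+a(X,D))$, being the exponential of this finite quantity, is nonzero; this is the first assertion.

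For the formula I would then invoke Theorem \ref{traza} with $-\mathfrak{n}<-n$, giving
\[
\log\det(I+a(X,D))=\tr(\log(I+a(X,D)))=\frac{1}{(2\pi)^n}\int_{T^*M}\sigma_{\log(1+a)}(x,\zeta)\,\Omega .
\]
It then remains to integrate the expansion \eqref{asymLog} term by term. Since the terms there have orders tending to $-\infty$, and both they and every remainder $R_N=\sigma_{\log(1+a)}-\log(1+a)-\sum_{k=1}^{N}(\cdots)$ are symbols of order $\le -\mathfrak{n}<-n$, they are fibrewise integrable over the compact base $M$ and $\int_{T^*M}R_N\,\Omega\to 0$ as $N\to\infty$; hence
\[
\int_{T^*M}\sigma_{\log(1+a)}\,\Omega=\int_{T^*M}\log(1+a(x,\zeta))\,\Omega+\sum_{k=1}^\infty\sum_{\mathfrak{I}_k}\frac{(-1)^k}{k+|\mathfrak{I}_{k,L}|}\int_{T^*M}(1+a(x,\zeta))^{-k-|\mathfrak{I}_{k,L}|}\mathfrak{r}_{\mathfrak{I}_k}(x,\zeta)\,\Omega ,
\]
and dividing by $(2\pi)^n$ yields the claimed identity.

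The main obstacle I expect is the order book-keeping in the second paragraph: one must verify that the individual terms $(1+a)^{-k-|\mathfrak{I}_{k,L}|}\mathfrak{r}_{\mathfrak{I}_k}$ — not merely the partial sums $\mathfrak{r}^k$ — are genuine symbols of order $<-n$, which requires unwinding the recursive definition of $\mathfrak{r}_{\mathfrak{I}_k}$ from Remark \ref{remExpAsyPara} and exploiting the degree bound on $P_{\beta,\gamma}^{(\kappa)}$ implicit in the estimate $\sigma_{AB}-\sigma_A\sigma_B\in S_{\rho,\delta}^{m_1+m_2-r}(\nabla)$ of Theorem \ref{producto}; and one must check that term-by-term integration is legitimate, i.e. that the symbol seminorms of the remainders $R_N$ are dominated by the gain in fibrewise decay, which can be extracted from the construction of the parameter-parametrix in Theorem \ref{realEstiParameSymbol}.
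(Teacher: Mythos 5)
Your proposal is correct and follows essentially the same route as the paper: establish via Corollary \ref{corLog} and the order-recovery argument of Remark \ref{remNegativeLog} that $\log(I+a(X,D))$ lies in $\Psi^{-\mathfrak{n}}\subset\Psi^{<-n}$, hence is trace class with nonvanishing determinant, and then compute the trace by Theorem \ref{traza} together with the expansion \eqref{asymLog}. Your version is in fact more careful than the paper's (which compresses all of this into two sentences), particularly in flagging the need to verify that each term $(1+a)^{-k-|\mathfrak{I}_{k,L}|}\mathfrak{r}_{\mathfrak{I}_k}$ individually has order $<-n$ and that term-by-term integration of the asymptotic expansion is justified.
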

\begin{proof}
Due to Remark \ref{remNegativeLog} the new operator $\log(I+a(X,D))$ is again of order $-\mathfrak{n}$, so that it is a trace-class pseudo-differential operator and its determinant is well-defined. Furthermore, Theorem \ref{traza} and the asymptotic formula \eqref{asymptForFunctionOfSymbol} allow us to calculate such determinant as follows:  
\begin{align*}
        \log\det(1+a(X,D)) = \frac{1}{(2\pi)^n}&\left[ \int_{T^*M} \log(1+a(x,\zeta)) \, \Omega \right.\\
        &+ \left.\sum_{k=1}^\infty \sum_{\mathfrak{I}_k}\frac{(-1)^k}{k+|\mathfrak{I}_{k,L}|}\int_{T^*M} (1+a(x,\zeta))^{-k-|\mathfrak{I}_{k,L}|} \mathfrak{r}_{\mathfrak{I}_k}(x,\zeta) \, \Omega\right],
    \end{align*}
    finishing the proof. 
\end{proof}

\subsection{Traces of heat operators} First, let us deal with general exponential operators. Notice that we can obtain expressions for the kernel and trace of such operators just by putting together Proposition \ref{kernelEnxx}, Theorem \ref{traza} and Corollary \ref{exponAsym}, this is stated in the following theorem.
\begin{thm}\label{ExponTrace}
   Let $0\leq \delta<\rho \leq 1$, $m\geq 0$. Let at least one of the conditions (1)-(3) of Theorem \ref{thmFunctionalCal} be fulfilled. Let $a\in S_{\rho, \delta}^{m}(\nabla)$ be parameter-elliptic with respect to $\Sigma$ and assume that the operator $a(X,D)$ is positive (so that its spectrum has positive real part). Then the kernel of the operator $e^{-a(X,D)}$ satisfies 
    \[
    \mathscr{A}(x,x) = \frac{1}{(2\pi)^n} \int_{T_x^*M} e^{-a(x,\zeta)}\left[1 +   \sum_{k=1}^\infty \sum_{\mathfrak{I}_k} \frac{(-1)^{1+k}}{(k+|\mathfrak{I}_{k,L}|)!}\mathfrak{r}_{\mathfrak{I}_k}(x,\zeta)\right]\, d\zeta.
    \]
     Moreover
    \[
    \tr\left( e^{-a(X,D)}\right) = \frac{1}{(2\pi)^n} \int_{T^*M} e^{-a(x,\zeta)}\left[1 +   \sum_{k=1}^\infty \sum_{\mathfrak{I}_k} \frac{(-1)^{1+k}}{(k+|\mathfrak{I}_{k,L}|)!}\mathfrak{r}_{\mathfrak{I}_k}(x,\zeta)\right]\, \Omega,
    \]
    where $\Omega$ is as in \eqref{cotangentBundleForm}.
\end{thm}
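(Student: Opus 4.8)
The plan is to deduce the statement directly from three results already at our disposal: Corollary~\ref{exponAsym}, which identifies $e^{-a(X,D)}$ as a pseudo-differential operator and pins down its symbol; Proposition~\ref{kernelEnxx}, which expresses the restriction of the Schwartz kernel to the diagonal as a fibre integral of the symbol; and Theorem~\ref{traza}, which computes the trace as a phase-space integral of the symbol. Indeed, the preamble to the theorem already signals that the proof is essentially a matter of assembling these three facts.

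First I would check that the hypotheses are exactly those of Corollary~\ref{exponAsym}: here $a\in S_{\rho,\delta}^{m}(\nabla)$ is parameter-elliptic with respect to $\Sigma$ and $a(X,D)$ is positive, hence positive real, so its spectrum lies in the right half-plane and the Dunford--Riesz integral along $\Theta$ converges. Therefore $e^{-a(X,D)}\in\Psi_{\rho,\delta}^{-m}(\Omega^{\kappa},\nabla)$ with symbol
\[
\sigma_{e^{-a}}(y,\eta)\sim e^{-a(y,\eta)}\Bigl[1+\sum_{k=1}^{\infty}\sum_{\mathfrak{I}_k}\tfrac{(-1)^{1+k}}{(k+|\mathfrak{I}_{k,L}|)!}\,\mathfrak{r}_{\mathfrak{I}_k}(y,\eta)\Bigr].
\]
For the statements concerning the diagonal kernel and the trace one takes $m>0$; then the same corollary asserts that $e^{-a(X,D)}$ is in fact smoothing, so it lies in $\Psi^{-\infty}\subset\Psi^{<-n}$, is trace-class in the sense of Subsection~\ref{traceSubsection}, and has a genuinely smooth kernel.

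Next I would apply Proposition~\ref{kernelEnxx} with $A=e^{-a(X,D)}$, which gives $\mathscr{A}(x,x)=\tfrac{1}{(2\pi)^n}\int_{T_x^*M}\sigma_{e^{-a}}(x,\zeta)\,d\zeta$, and then substitute the asymptotic expansion of $\sigma_{e^{-a}}$ from Corollary~\ref{exponAsym}; this yields the first displayed formula, the identity being read in the asymptotic sense (truncating the series at order $N$ reproduces $\mathscr{A}(x,x)$ up to a fibre integral of a symbol of arbitrarily negative order). Term-by-term fibre integration is harmless because $\operatorname{Re}a(x,\zeta)\to+\infty$ as $\langle\zeta\rangle_x\to\infty$ while each $\mathfrak{r}_{\mathfrak{I}_k}$ grows only polynomially, so every summand $e^{-a}\mathfrak{r}_{\mathfrak{I}_k}$ decays rapidly in $\zeta$. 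Then, integrating this identity against $|dx|$ over $M$ and recalling from \eqref{cotangentBundleForm} that $d\zeta_x\wedge|dx|=\Omega$ — or, equivalently, invoking Theorem~\ref{traza} directly for the operator $e^{-a(X,D)}$ of order $<-n$ — produces the claimed formula for $\tr\bigl(e^{-a(X,D)}\bigr)$.

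The computation is essentially bookkeeping. The only points requiring a word of care are that $e^{-a(X,D)}$ really is trace-class (so that Proposition~\ref{kernelEnxx} and Theorem~\ref{traza} apply), and the meaning of the infinite series appearing on the right-hand sides, i.e.\ the legitimacy of integrating the asymptotic expansion of $\sigma_{e^{-a}}$ term by term. Both are already settled once Corollary~\ref{exponAsym} is invoked with $m>0$, so I do not expect any genuine obstacle beyond this.
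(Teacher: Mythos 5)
Your proposal is correct and follows exactly the route the paper takes: the theorem is stated as an immediate consequence of Corollary \ref{exponAsym}, Proposition \ref{kernelEnxx} and Theorem \ref{traza}, which is precisely your assembly. Your additional remarks on trace-class membership (via the smoothing property for $m>0$) and on reading the infinite series in the asymptotic, term-by-term integrated sense are sensible clarifications of points the paper leaves implicit.
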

These results acquire special significance when considering operators of the form $ta(X,D)$, with $t>0$, as the semigroup $e^{-ta(X,D)}$ provides solutions to the heat equation:
\[
\begin{cases}
    \partial_t u(t,x)+a(X,D)u(t,x)=0,\quad t>0, \,x\in M,\\
    u(0,x)=u_0(x).
\end{cases}
\]
The constructions developed in the previous sections remain valid for the family $ta(X,D)$; however, particular care must be taken with the dependence on the parameter $t$. By construction, we have
\begin{align*}
    e^{-ta(X,D)} = \frac{1}{2\pi i}\int_{\Theta} e^{-z} (ta(X,D)-z I)^{-1}\, dz = \frac{1}{2\pi i}\int_{\Theta'} e^{-tw} (a(X,D)-w I)^{-1}\, dw, 
\end{align*}
where $\Theta'=t\Theta$. Observe that the approximation by the parameter-dependent parametrix $A_w^\sharp$ (as used in \eqref{approxResolventOperator}) is valid for sufficiently large $|w|$. Therefore, under the change of variables above, this approximation holds for sufficiently small $t$. Moreover, by the Cauchy integral theorem, we may deform the contour $\Theta'$ back to a fixed path $\Theta$, independent of $t$. Consequently, we obtain the representation:
\[
e^{-ta(X,D)} = \frac{1}{2\pi i}\int_{\Theta} e^{-tw}A_w^\sharp \, dw + \frac{1}{2\pi i}\int_{\Theta} e^{-tz} \left[(A-w I)^{-1}- A_w^\sharp\right]\, dw.
\]
As $t\to 0$, the second integral becomes negligible, due to estimates analogous to those in Corollary \ref{normResolvent}.

Finally, this leads to an asymptotic expansion for the heat operator:
\begin{thm}\label{heatKernelExpansion}
     Under same hypothesis as in Theorem \ref{ExponTrace} we have that in the $C^k$ norm  
    \[
    \mathscr{A}_t(x,x) \sim \frac{1}{(2\pi)^n} \int_{T_x^*M} e^{-t a(x,\zeta)}\left[1 +   \sum_{k=1}^\infty \sum_{\mathfrak{I}_k} \frac{(-1)^{1+k}}{(k+|\mathfrak{I}_{k,L}|)!}\mathfrak{r}_{\mathfrak{I}_k}(x,\zeta)\:t^{k+|\mathfrak{I}_{k,L}|}\right]\, d\zeta
    \]
    as $t\to 0$. That is, for any $k\in\bbN$ there exists $N_k\geq 0$ such that for any $N \geq N_k$ it holds
    \begin{align*}
        \left\|\mathscr{A}_t(x,x) - \frac{1}{(2\pi)^n} \int_{T_x^*M} e^{-t a(x,\zeta)}\left[1 +   \sum_{k=1}^N \sum_{\mathfrak{I}_k} \frac{(-1)^{1+k}}{(k+|\mathfrak{I}_{k,L}|)!}\mathfrak{r}_{\mathfrak{I}_k}(x,\zeta)\:t^{k+|\mathfrak{I}_{k,L}|}\right]\, d\zeta\right\|_k\\
        \leq C_k t^k
    \end{align*}
for some $C_k>0$. Moreover
    \[
    \tr\left( e^{-ta(X,D)}\right) \sim \frac{1}{(2\pi)^n} \int_{T^*M} e^{-t a(x,\zeta)}\left[1 +   \sum_{k=1}^\infty \sum_{\mathfrak{I}_k} \frac{(-1)^{1+k}}{(k+|\mathfrak{I}_{k,L}|)!}\mathfrak{r}_{\mathfrak{I}_k}(x,\zeta)\:t^{k+|\mathfrak{I}_{k,L}|}\right]\, \Omega
    \]
    as $t\to 0$, where $\Omega$ is as in \eqref{cotangentBundleForm}. That is, for any $k\in\bbN$ there exists $N_k\geq 0$ such that for any $N \geq N_k$ it holds
\begin{align*}
    \left\|\tr\left( e^{-ta(X,D)}\right) - \frac{1}{(2\pi)^n} \int_{T^*M} e^{-t a(x,\zeta)}\left[1 +   \sum_{k=1}^N \sum_{\mathfrak{I}_k} \frac{(-1)^{1+k}}{(k+|\mathfrak{I}_{k,L}|)!}\mathfrak{r}_{\mathfrak{I}_k}(x,\zeta)\:t^{k+|\mathfrak{I}_{k,L}|}\right]\, \Omega\right\|_k\\
    \leq C_k t^k
\end{align*}
for some $C_k>0$.
\end{thm}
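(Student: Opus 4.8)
Throughout we take $m>0$, which by Corollary \ref{exponAsym} (applied to $ta$, together with the rescaling discussion preceding the statement) makes $e^{-ta(X,D)}$ a smoothing operator for every $t>0$; in particular its pointwise diagonal kernel value and its trace are well defined, and by Proposition \ref{kernelEnxx} and Theorem \ref{traza},
\[
\mathscr{A}_t(x,x)=\frac{1}{(2\pi)^n}\int_{T_x^*M}\sigma_{e^{-ta(X,D)}}(x,\zeta)\,d\zeta,\qquad \tr\big(e^{-ta(X,D)}\big)=\frac{1}{(2\pi)^n}\int_{T^*M}\sigma_{e^{-ta(X,D)}}(x,\zeta)\,\Omega .
\]
Thus everything reduces to: (i) an asymptotic expansion of the symbol $\sigma_{e^{-ta(X,D)}}(x,\zeta)$ in powers of $t$ with a quantitative remainder that is uniform on $M$ and whose $x$-derivatives are controlled; and (ii) integrating that expansion against $d\zeta$ (resp.\ $\Omega$), which is legitimate term by term because for $m>0$ ellipticity and positive-realness of $a$ force $e^{-ta(x,\zeta)}$, and hence every term and the remainder, to decay super-polynomially in $\zeta$.

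For (i) we start from the representation obtained exactly as in \eqref{approxResolventOperator}, after rescaling $w=z/t$ and deforming $t\Theta$ back to the fixed contour $\Theta$ by Cauchy's theorem:
\[
e^{-ta(X,D)}=\frac{1}{2\pi i}\int_{\Theta}e^{-tw}A_w^\sharp\,dw+\frac{1}{2\pi i}\int_{\Theta}e^{-tw}\big[(a(X,D)-wI)^{-1}-A_w^\sharp\big]\,dw .
\]
By (the rescaled form of) Corollary \ref{normResolvent}, the second term has kernel that is $O(t^N)$ in every $C^k(M)$-norm for every $N$, since the decay of $e^{-tw}$ on $\Theta$ combined with the fast parameter-decay of $(a(X,D)-wI)^{-1}-A_w^\sharp$ beats every power of $t$. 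Into the first term we insert the expansion \eqref{asymPara2},
\[
a^\sharp(y,\eta,w)\sim (a(y,\eta)-w)^{-1}+\sum_{k=1}^\infty\sum_{\mathfrak{I}_k}(-1)^k\,\mathfrak{r}_{\mathfrak{I}_k}(y,\eta)\,(a(y,\eta)-w)^{-1-k-|\mathfrak{I}_{k,L}|},
\]
and use the elementary identity, valid for $j\ge 0$ because $a(y,\eta)$ lies in the region bounded by $\Theta$ and $e^{-tw}$ decays along $\Theta$,
\[
\frac{1}{2\pi i}\int_{\Theta}e^{-tw}(a(y,\eta)-w)^{-1-j}\,dw=\frac{t^{\,j}}{j!}\,e^{-t a(y,\eta)},
\]
which follows by writing $(a-w)^{-1-j}=\tfrac1{j!}\partial_w^{\,j}(a-w)^{-1}$ and integrating by parts $j$ times (the endpoints, being at $\operatorname{Re}w\to+\infty$, contribute nothing). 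Taking $j=k+|\mathfrak{I}_{k,L}|$ and summing reproduces exactly the series
\[
\sigma_{e^{-ta(X,D)}}(x,\zeta)\sim e^{-t a(x,\zeta)}\Big[1+\sum_{k=1}^\infty\sum_{\mathfrak{I}_k}\frac{(-1)^{1+k}}{(k+|\mathfrak{I}_{k,L}|)!}\,\mathfrak{r}_{\mathfrak{I}_k}(x,\zeta)\,t^{\,k+|\mathfrak{I}_{k,L}|}\Big]
\]
predicted by Corollary \ref{exponAsym} for $ta$.

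To turn this into the quantitative $C^k$ statement, fix $k$ and truncate the parametrix expansion at a level $N$: the remainder $a^\sharp-\big((a-w)^{-1}+\sum_{1\le j\le N}(\cdots)\big)$ is a parameter symbol of order $-Nr$ (with $r$ as in \eqref{rGenRhoMinusDelta}) obeying, together with all its $\lambda$-derivatives, the bounds of Theorem \ref{realEstiParameSymbol}; running the residue/integration-by-parts computation on it produces, after the contour integral, a factor of order $t^{\,Nr}$ times $e^{-ta}$, while the smoothing resolvent remainder contributes $O(t^\infty)$ as above. Choosing $N=N_k$ so that $Nr\ge k$ (and large enough that the fibre integral of the remainder, after at most $k$ $x$-derivatives, still converges) yields the bound $\le C_k t^k$ in $C^k(M)$; the $x$-derivatives are taken under both integral signs and reduce, by the usual device, to symbol estimates carrying extra horizontal derivatives $\nabla_{i_1}\cdots\nabla_{i_q}$, whose only cost is a power $\langle\eta\rangle_{y}^{\delta q}$ that is absorbed by the super-polynomial decay of $e^{-ta(x,\zeta)}$ in $\zeta$. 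Integrating the resulting bounds over $T_x^*M$ (resp.\ over $T^*M$ for the trace, via Theorem \ref{traza}) gives the two displayed estimates, the truncated sums in the statement being exactly the first $N$ terms of the series above.

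The step I expect to be the main obstacle is making the error estimate genuinely uniform as $t\to0$: one must track how the rescaling $w\mapsto z/t$ interacts with the parameter-dependent bounds of Theorem \ref{realEstiParameSymbol} (which are phrased through $|\lambda|^{1/m}+\langle\eta\rangle_{y}$), verify that $\Theta$ can indeed be frozen independently of $t$ by Cauchy's theorem, and check that the $t^k$ gain survives both the $k$-fold differentiation in $x$ and the integration over the noncompact fibre. Pinning down the precise relation between $N_k$, the exponent $r$ from \eqref{rGenRhoMinusDelta}, and the target order $k$ is the delicate bookkeeping; the residue computation itself and the matching of coefficients with Corollary \ref{exponAsym} are routine.
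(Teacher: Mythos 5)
Your proposal is correct and follows essentially the same route as the paper's proof: insert the parametrix expansion \eqref{asymPara2} into the contour integral $\frac{1}{2\pi i}\int_\Theta e^{-tw}a^\sharp(x,\zeta,w)\,dw$, evaluate the resulting integrals of $e^{-tw}(a-w)^{-1-j}$ to produce the powers $t^j/j!$ times $e^{-ta}$, and conclude via Proposition \ref{kernelEnxx}, Theorem \ref{traza} and Corollary \ref{normResolvent}. In fact your write-up supplies more detail than the paper does on the truncation order $N_k$ versus the gain $r$ from \eqref{rGenRhoMinusDelta} and on the uniformity of the remainder in the $C^k$ norms.
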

\begin{proof}
We just need to proceed as in the final part of the proof of Theorem \ref{thmFunctionalCal} taking care where $t$ appears in the formuli. Hence as $t\to0$ and $\langle\zeta\rangle_{x}\to\infty$, using \eqref{asymPara2}, we have that 
\begin{align*}
    \sigma_{e^{-ta}}(x,\zeta) &\sim \frac{1}{2\pi i}\int_\Theta e^{-t w} a^\sharp(x,\zeta,w)\,dw\\
    &\sim \frac{1}{2\pi i}\int_\Theta e^{-t w} \Bigg((a(x,\zeta)-w)^{-1} \\ 
    &+ \left. \sum_{k=1}^\infty \sum_{\mathfrak{I}_k}(-1)^k  \mathfrak{r}_{\mathfrak{I}_k}(x,\zeta)(a(x,\zeta)-\lambda)^{-1-k-|\mathfrak{I}_{k,L}|}\right)\,dw\\
    & \sim e^{-t a(x,\zeta)}\left[1 +   \sum_{k=1}^\infty \sum_{\mathfrak{I}_k} \frac{(-1)^{1+k}}{(k+|\mathfrak{I}_{k,L}|)!}\mathfrak{r}_{\mathfrak{I}_k}(x,\zeta)\:t^{k+|\mathfrak{I}_{k,L}|}\right].
\end{align*}
Now the result follows from Proposition \ref{kernelEnxx}, Theorem \ref{traza} and the estimates from Corollary \ref{normResolvent}.  
\end{proof}
\subsection{Spectral \texorpdfstring{$\zeta$}{L}-functions}
We finish this article by calculating traces of certain complex powers of Safarov pseudo-differential operators, this procedure is usually known as defining the spectral $\zeta$-functions associated to a given operator $A$. 
\begin{defn}
     Let $0\leq \delta<\rho \leq 1$ and $m>0$. Let $a\in S_{\rho, \delta}^{m}(\nabla)$ be parameter-elliptic with respect to $\Lambda$. If the symbol $a$ satisfies 
    \[
    \exp (\log( a)) \equiv a,
    \]
    we define the associated spectral $\zeta$-function for $\operatorname{Re}(z)m>n$ as follows: 
    \[
    \zeta(a(X,D),-z):= \tr (a(X,D)^{-z}).
    \]
\end{defn}
Therefore, using Corollary \ref{ComplexPow} and Theorem \ref{traza} we can immediately compute the spectral functions obtaining the following result: 
\begin{thm}\label{zetaFun}
    Let $0\leq \delta<\rho \leq 1$, $m> 0$. Let at least one of the conditions (1)-(3) of Theorem \ref{thmFunctionalCal} be fulfilled. Let $a\in S_{\rho, \delta}^{m}(\nabla)$ be parameter-elliptic with respect to $\Sigma$. Then for $\operatorname{Re}(z)m>n$ we have 
    \begin{align*}
       &\zeta(a(X,D),-z) = \int_{T^*M} a(x, \zeta)^{-z} \, \Omega \,\\
       &+ \sum_{k=1}^\infty \sum_{\mathfrak{I}_k}(-1)^{1+|\mathfrak{I}_{k,L}|}(-z)(-z-1)\cdots(-z-k-|\mathfrak{I}_{k,L}|+1) \frac{a(y,\eta)^{-z-k-|\mathfrak{I}_{k,L}|}}{(k+|\mathfrak{I}_{k,L}|)!}\mathfrak{r}_{\mathfrak{I}_k}(x,\zeta) \, \Omega.
    \end{align*}
\end{thm}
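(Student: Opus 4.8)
The plan is to obtain $\zeta(a(X,D),-z)=\tr(a(X,D)^{-z})$ directly from the complex–power construction of Corollary \ref{ComplexPow} together with the trace formula of Theorem \ref{traza}. First I would check that the object is well defined: for $\operatorname{Re}(z)m>n$, Corollary \ref{ComplexPow} applied with $s=-z$ gives $A^{-z}:=a(X,D)^{-z}\in\Psi_{\rho,\delta}^{-m\operatorname{Re}(z)}(\Omega^\kappa,\nabla)$, and since $-m\operatorname{Re}(z)<-n$ this operator lies in $\Psi^{<-n}$, hence is of trace class by the construction of Subsection \ref{traceSubsection}; thus $\zeta(a(X,D),-z)$ makes sense and equals $\tr(A^{-z})$.

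Next I would invoke Theorem \ref{traza}, which computes the trace of an operator of order $<-n$ as the integral of its full symbol over $T^*M$ against the canonical form $\Omega$, namely $\tr(A^{-z})=\tfrac{1}{(2\pi)^n}\int_{T^*M}\sigma_{a^{-z}}(x,\zeta)\,\Omega$. Inserting the asymptotic expansion of $\sigma_{a^{-z}}$ provided by Corollary \ref{ComplexPow},
\begin{align*}
\sigma_{a^{-z}}(x,\zeta)\sim\;& a(x,\zeta)^{-z}\\
&+\sum_{k=1}^\infty\sum_{\mathfrak{I}_k}(-1)^{1+|\mathfrak{I}_{k,L}|}(-z)(-z-1)\cdots(-z-k-|\mathfrak{I}_{k,L}|+1)\frac{a(x,\zeta)^{-z-k-|\mathfrak{I}_{k,L}|}}{(k+|\mathfrak{I}_{k,L}|)!}\,\mathfrak{r}_{\mathfrak{I}_k}(x,\zeta)
\end{align*}
as $\langle\zeta\rangle_x\to\infty$, and integrating term by term against $\Omega$ yields the stated identity (the normalising factor $(2\pi)^{-n}$ being absorbed into $\Omega$).

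The only step I expect to require genuine care — the main obstacle — is turning this purely asymptotic symbol expansion into an exact, convergent series identity for the trace. I would control this through the order bookkeeping built into Theorem \ref{realEstiParameSymbol} and Remark \ref{remExpAsyPara}: since $a^\sharp\sim(a-\lambda)^{-1}\sum_k b_k$ with $b_k\in S_{\rho,\delta}^{-kr}(\nabla)$ for some $r>0$, tracing the constants through the contour integral \eqref{functCalcSymbol} shows that the $k$-th summand above, $\mathfrak{r}_{\mathfrak{I}_k}(x,\zeta)\,a(x,\zeta)^{-z-k-|\mathfrak{I}_{k,L}|}$, is a symbol of order $-m\operatorname{Re}(z)-kr$. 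For $\operatorname{Re}(z)m>n$ each such term is of order $<-n$, so its fibrewise integral over $T^*_xM$ converges, and as $k\to\infty$ the orders tend to $-\infty$, so the resulting series of functions on $M$ converges uniformly and may be integrated over $M$. Finally, the difference between $\sigma_{a^{-z}}$ and the partial sum up to index $N$ lies in $S_{\rho,\delta}^{-m\operatorname{Re}(z)-(N+1)r}(\nabla)$, which is again of order $<-n$; applying Theorem \ref{traza} to this remainder shows its trace equals the tail of the series, which tends to $0$ as $N\to\infty$, so the equality holds exactly. The residual verifications (absolute convergence of the $\lambda$-integral defining $A^{-z}$, interchange of the $\Gamma$-integral with the fibre integral, and holomorphic dependence of the coefficients on $z$) are routine given the estimates of Theorem \ref{EstimadoFinalResolvente} and Corollary \ref{normResolvent}.
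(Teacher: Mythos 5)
Your proposal follows exactly the paper's route: the paper gives no separate proof, simply asserting that the theorem follows ``immediately'' by combining Corollary \ref{ComplexPow} (the symbol expansion of $a(X,D)^{-z}$, which for $\operatorname{Re}(z)m>n$ places the operator in $\Psi^{<-n}$) with the trace formula of Theorem \ref{traza}. Your additional discussion of why the asymptotic expansion can be integrated term by term to yield an exact identity (and your remark about the $(2\pi)^{-n}$ normalisation, which the paper's statement indeed omits relative to Theorem \ref{traza}) goes beyond what the paper records, but the underlying argument is the same.
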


\bibliography{mybib}{}
\bibliographystyle{habbrv}

\end{document}